\numberwithin{equation}{section}
\newtheorem{theorem}{Theorem}[section]
\newtheorem{lemma}[theorem]{Lemma}
\newtheorem{corollary}[theorem]{Corollary}
\newtheorem{proposition}[theorem]{Proposition}
\newtheorem{example}[theorem]{Example}
\newenvironment{conjecture}[1][Conjecture.\ ]{\textbf{#1}\it}{\rm}\def\eqref#1{(\ref{#1})}
\begin{document}
\author{Alexander Grigor'yan}
\address{School of Mathematical Sciences and LPMC, Nankai University, 300071
Tianjin, P. R. China and Department of Mathematics, University of Bielefeld,
33501 Bielefeld, Germany}
\email{grigor@math.uni-bielefeld.de}
\author{Yuhua Sun}
\address{School of Mathematical Sciences and LPMC, Nankai University, 300071
Tianjin, P. R. China}
\email{sunyuhua@nankai.edu.cn}
\author{Igor Verbitsky}
\address{Department of Mathematics, University of Missouri, Columbia,
Missouri 65211, USA}
\email{verbitskyi@missouri.edu}
%\date{September 2018}
\title{Superlinear elliptic inequalities on manifolds}
\thanks{\noindent Grigor'yan was supported by SFB1283 of the German Research
Council. Sun was supported by the National Natural Science Foundation of
China (No.11501303, No. 11871296, No. 11761131002), and also by the Fundamental
Research Funds for the Central Universities.}
\subjclass[2010]{Primary 35J61; Secondary 58J05, 31B10, 42B37}
\keywords{Semilinear elliptic equations, Green's function, complete
Riemannian manifold}

\begin{abstract}
Let $M$ be a complete non-compact Riemannian manifold and let $\sigma $ be a
Radon measure on $M$. We study the problem of existence or non-existence of
positive solutions to a semilinear elliptic inequaliy
\begin{equation*}
-\Delta u\geq \sigma u^{q}\quad \text{in}\,\,M,
\end{equation*}%
where $q>1$. We obtain necessary and sufficent criteria for existence of
positive solutions in terms of Green function of $\Delta $. In particular,
explicit necessary and sufficient conditions are given when $M$ has
nonnegative Ricci curvature everywhere in $M$, or more generally when
Green's function satisfies the 3G-inequality.
\end{abstract}

\maketitle
\tableofcontents

\section{Introduction}

Let $M$ be a connected complete non-compact Riemannian manifold. Denote by $%
\mathcal{M}^{+}\left( M\right) $ the class of nonnegative Radon measures on $%
M$. In this paper we are concerned with the following problem: characterize $%
q>1$ and $\sigma \in \mathcal{M}^{+}(M)$ for which there exists a positive
solution $u\in C^{2}\left( M\right) $ to the following superlinear elliptic
inequality:
\begin{equation}
\Delta u+\sigma u^{q}\leq 0\quad \text{in}\,\,M,  \label{ineq}
\end{equation}%
where $\Delta $ is the Laplace-Beltrami operator on $M$.

If such a solution $u$ exists, then $u$ is a non-constant positive
superharmonic function on $M$, so that $M$ is non-parabolic. Hence, we can
assume without loss of generality that $M$ is non-parabolic. In particular,
the operator $\Delta $ on $M$ has a positive finite Green function (see \cite%
{G0}). Denote by $G\left( x,y\right) $ the minimal Green function.

Clearly, any $C^{2}$ non-negative solution $u$ of (\ref{ineq}) satisfies the
following integral inequality:
\begin{equation}
u(x)\geq \int_{M}G(x,y)\,[u(y)]^{q}d\sigma (y).  \label{int-ineq}
\end{equation}%
We also consider the integral inequality (\ref{int-ineq}) independently of (%
\ref{ineq}). By a solution of (\ref{int-ineq}) we mean any non-negative
l.s.c. function that satisfies (\ref{int-ineq}) for all $x\in M$.

%%%%%%%%%%%

In this paper, we give necessary and sufficient conditions for the existence
of positive solutions to (\ref{int-ineq}) and (\ref{ineq}) in terms of
certain properties of the Green function. Of course, any necessary condition
for (\ref{int-ineq}) will also be necessary for (\ref{ineq}). On the other
hand, if $\sigma $ has a smooth positive density with respect to $\mu $ then
the existence of a positive solution for (\ref{int-ineq}) implies that for (\ref%
{ineq}) (see Lemma \ref{Lemsmooth} below). Hence, in the rest of the paper
we concentrate on the integral inequality (\ref{int-ineq}) unless otherwise
specified.

Denote also by $\mu $ the Riemannian measure on $M$ and by $d$ the geodesic
distance. The geodesic balls on $M$ will be denoted by $B(x,r)=\{y\in
M:\,d(x,y)<r\}$, where $x\in M$ and $r>0$. In what follows, we assume
without loss of generality that
\begin{equation}
\int_{r_{0}}^{+\infty }\frac{tdt}{\mu (B(o,t))}<\infty  \label{cond-0}
\end{equation}%
for some/all $o\in M$ and $r_{0}>0$, since it is known that condition (\ref%
{cond-0}) is necessary for the non-parabolicity of $M$ (see \cite{G85}, \cite{Var}).

Our first result uses the following hypothesis:
\begin{equation}
G(x,y)\approx \int_{d(x,y)}^{+\infty }\frac{tdt}{\mu (B(x,t))},\quad x,y\in
M,  \tag{GLY}  \label{ly}
\end{equation}%
where the sign $\approx $ means that the ratio of the left- and right-hand
sides is bounded from above and below by two positive constants. For
example, estimate (\ref{ly}) holds if the Ricci curvature of $M$ is
non-negative, which follows from the heat kernel estimate of Li and Yau \cite%
{LY}.

More generally, (\ref{ly}) holds whenever the following two conditions are
satisfied:

\begin{enumerate}
\item the volume doubling condition: for all $x\in M$ and $r>0$
\begin{equation}
\mu (B(x,2r))\leq C\,\mu (B(x,r));  \tag{VD}  \label{D}
\end{equation}

\item the Poincar\'{e} inequality: for any ball $B=B(x,r)\subset M$ and any $%
f\in C^{2}\left( B\right) $,
\begin{equation}
\int_{B}|f-f_{B}|^{2}d\mu \leq C\,r^{2}\,\int_{B}|\nabla f|^{2}d\mu ,
\tag{PI}  \label{pi}
\end{equation}%
where $f_{B}$ stands for the mean value of $f$ on $B$ and $C$ is some
constant;
\end{enumerate}

\noindent(see \cite{G}, \cite{GSC}, \cite{LT}, \cite{LY}, \cite{SC}).

\begin{theorem}
\label{main} Assume that conditions {(\ref{D}) and} (\ref{ly}) are
satisfied. Then (\ref{int-ineq}) has a positive solution if and only if
there exist $o\in M$, $r_{0}>0$ and $C>0$ such that the following two
conditions hold:
\begin{equation}
\int_{r_{0}}^{+\infty }\left[ \int_{r}^{+\infty }\frac{tdt}{\mu (B(o,t))}%
\right] ^{q-1}\frac{\sigma (B(o,r))}{\mu (B(o,r))}rdr<\infty ,
\label{cond-int1}
\end{equation}%
and
\begin{equation}
\sup_{x\in B(o,r)}\left[ \int_{0}^{r}\frac{\sigma (B(x,s))}{\mu (B(x,s))}%
\,sds\right] \,\left[ \int_{r}^{+\infty }\frac{tdt}{\mu (B(o,t))}\right]
^{q-1}\leq C,  \label{cond-int2}
\end{equation}%
for all $r>r_{0}$.

Moreover, if (\ref{int-ineq}) has a positive solution then both (\ref%
{cond-int1}) and (\ref{cond-int2}) hold for all $o\in M$ and $r_{0}>0$ with $%
C=C\left( o,r_{0}\right) $.
\end{theorem}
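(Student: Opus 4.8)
The plan is to prove the two implications separately, in both directions using \eqref{ly} through the pointwise bound it yields (by Tonelli): writing $\mathbf{G}\nu(x):=\int_M G(x,y)\,d\nu(y)$ for the Green potential and $h(r):=\int_r^{\infty}\frac{t\,dt}{\mu(B(o,t))}$ (so that $h(d(o,x))\approx G(o,x)$),
\[
\mathbf{G}(f\,d\sigma)(x)\ \approx\ \int_0^{\infty}\frac{t}{\mu(B(x,t))}\Big(\int_{B(x,t)}f\,d\sigma\Big)\,dt\qquad(f\ge 0),
\]
while \eqref{D} is used to replace $\mu(B(x,t))$ by $\mu(B(o,t))$ when $t\gtrsim d(o,x)$ and to obtain the radial doubling $h(t)\approx h(2t)$. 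A useful reformulation: integrating by parts in $\rho=d(o,y)$ (the boundary terms being controlled by \eqref{cond-0}) shows that \eqref{cond-int1} is equivalent to $\int_{M\setminus B(o,r_0)}h(d(o,y))^{q}\,d\sigma(y)<\infty$; I denote this integral by $C_1$ and write $C_0:=h(r_0)^{q}\sigma(B(o,r_0))$.

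\textbf{Sufficiency.} A positive, l.s.c.\ supersolution of \eqref{int-ineq} is by definition a positive solution, so it is enough to exhibit one, and I take the explicit barrier $\phi(x)=\varepsilon\,h\big(\max\{r_0,d(o,x)\}\big)$ with $\varepsilon>0$ to be fixed small; $\phi$ is positive, bounded and l.s.c.\ by \eqref{cond-0}. Estimate $\mathbf{G}(\phi^{q}d\sigma)(x)$ with $\rho=d(o,x)$ (the case $\rho\le r_0$ being handled the same way with $r_0$ in place of $\rho$): split $\sigma$ into the part in $B(x,\rho/2)$ and the rest. On $B(x,\rho/2)$ one has $\phi(y)\approx\varepsilon h(\rho)$ by radial doubling, so that contribution is $\approx\varepsilon^{q}h(\rho)^{q}\,\mathbf{G}(\sigma|_{B(x,\rho/2)})(x)$, and by the pointwise bound $\mathbf{G}(\sigma|_{B(x,\rho/2)})(x)\approx\int_0^{\rho/2}\tfrac{\sigma(B(x,s))}{\mu(B(x,s))}s\,ds+h(\rho)\sigma(B(x,\rho/2))$ the first summand is $\le C\,h(\rho)^{1-q}$ by \eqref{cond-int2} (applied at radius $2\rho$) and the second is $\lesssim h(\rho)^{1-q}(C_0+C_1)$ since $B(x,\rho/2)\subset B(o,2\rho)$ and (the reformulation of) \eqref{cond-int1} bounds $h(\rho)^{q}\sigma(B(o,2\rho))$. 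On the complement, \eqref{ly} gives $G(x,y)\approx h(\max\{\rho,d(o,y)\})$; splitting once more at $d(o,y)=\rho$ and using $h(d(o,y))^{q+1}\le h(\rho)\,h(d(o,y))^{q}$, that contribution is $\lesssim\varepsilon^{q}h(\rho)(C_0+C_1)$. Summing, $\mathbf{G}(\phi^{q}d\sigma)\le K\varepsilon^{q-1}(C+C_0+C_1)\,\phi\le\phi$ once $\varepsilon$ is small, so $\phi$ solves \eqref{int-ineq}.

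\textbf{Necessity.} Let $u>0$ be a solution of \eqref{int-ineq} (not $\equiv\infty$). Replacing $u$ by $\mathbf{G}(u^{q}d\sigma)\le u$ --- still a positive solution, now a Green potential, hence superharmonic and finite $\mu$-a.e.\ --- I may assume $u$ is superharmonic. Choosing $r_0$ so that $\sigma$ charges a small ball around $o$ and testing \eqref{int-ineq} against $\sigma$ restricted to it gives, via \eqref{ly}, the matching lower bound $u(x)\gtrsim h\big(\max\{r_0,d(o,x)\}\big)$. The crux is an infinite iteration. Fix $r>r_0$; since $u$ is still a supersolution for $\sigma|_{B(o,2r)}$, feed the lower bound repeatedly into $u\ge\mathbf{G}\big(u^{q}\,d\sigma|_{B(o,2r)}\big)$, at each step retaining only the contribution of a thin annulus about the point --- so that $\int_0^{r}\tfrac{\sigma(B(\cdot,s))}{\mu(B(\cdot,s))}s\,ds$ is comparable across it and the (possible) local singularity of $\sigma$ is avoided. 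Using the pointwise bound and \eqref{D} this produces, for every $k\ge1$ and $x\in B(o,r)$,
\[
u(x)\ \gtrsim\ C_*^{\frac{q^{k}-1}{q-1}}\,h(r)^{q^{k}}\Big(\int_0^{r}\tfrac{\sigma(B(x,s))}{\mu(B(x,s))}\,s\,ds\Big)^{\frac{q^{k}-1}{q-1}},
\]
with a fixed $C_*>0$. Raising to the power $q^{-k}$ and letting $k\to\infty$: the left side tends to $1$ at every $x$ with $0<u(x)<\infty$, i.e.\ $\mu$-a.e., while the right side tends to a positive multiple of $h(r)\big(\int_0^{r}\tfrac{\sigma(B(x,s))}{\mu(B(x,s))}s\,ds\big)^{1/(q-1)}$; hence $\int_0^{r}\tfrac{\sigma(B(x,s))}{\mu(B(x,s))}s\,ds\le C\,h(r)^{1-q}$ for $\mu$-a.e.\ $x\in B(o,r)$, and by lower semicontinuity of $x\mapsto\int_0^{r}\tfrac{\sigma(B(x,s))}{\mu(B(x,s))}s\,ds$ for all such $x$ --- this is \eqref{cond-int2}. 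Condition \eqref{cond-int1} follows from the same scheme run in the "far" regime: keeping instead the contribution of $\{\,r_0<d(o,y)<d(o,x)/2\,\}$ turns $u\ge\mathbf{G}(u^{q}d\sigma)$ into a self-improving, Gronwall-type inequality for a radial minorant of $u$, whose finiteness (from $u<\infty$ $\mu$-a.e.) forces $\int_{M\setminus B(o,r_0)}h(d(o,y))^{q}\,d\sigma(y)<\infty$; alternatively, \eqref{cond-int1} drops out of testing \eqref{int-ineq} against $d\lambda=h(d(o,\cdot))^{q-1}\mathbf{1}_{M\setminus B(o,r_0)}\,d\sigma$ and applying H\"older's inequality with exponent $q$. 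Basepoint-independence (with $C=C(o,r_0)$) follows from \eqref{D}; should $u(o)=\infty$, one runs the argument at a nearby point where $u$ is finite.

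\textbf{The main obstacle.} Everything substantive is in the necessity direction, and precisely in obtaining the \emph{sharp} exponent $q-1$: a single application of \eqref{int-ineq} yields only the weaker exponent $q+1$, and gaining the correct one genuinely requires the infinite iteration together with the finiteness of $u$ and the $q^{-k}$-th power device. The two technical points that must be nailed down, both consequences of \eqref{D}, are the radial doubling of $h$ and the comparability of $\int_0^{r}\tfrac{\sigma(B(\cdot,s))}{\mu(B(\cdot,s))}s\,ds$ at nearby points --- the latter being exactly why one keeps only annular contributions at each step of the iteration, so as to steer around the local singularities of $\sigma$ that \eqref{cond-int2} rules out. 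By contrast, once the barrier $\phi$ is in hand the sufficiency direction is routine bookkeeping with \eqref{ly} and \eqref{D}.
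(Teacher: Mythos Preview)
Your sufficiency argument is essentially correct and close in spirit to the paper's: both amount to checking that the barrier $m(x)=\min(G(o,x),a^{-1})\approx \phi(x)$ satisfies $G(m^{q}d\sigma)\le Cm$ (the paper does this through the abstract conditions \eqref{last-1}--\eqref{last-2} and the 3G-inequality, you do it directly from \eqref{ly} and \eqref{D}). One small correction: on the complement of $B(x,\rho/2)$ you only get $G(x,y)\lesssim h(\max\{\rho,d(o,y)\})$, not $\approx$, but only the upper bound is needed.

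The necessity argument, however, has a genuine gap at the heart of the iteration. Your inductive step requires passing from a lower bound $u(y)\gtrsim h(r)^{q^{k}}I(y,r)^{(q^{k}-1)/(q-1)}$, valid for $y$ in some ball, to the same bound with $k+1$ at $x$. For that you need to control $\int G(x,y)\,I(y,r)^{q(q^{k}-1)/(q-1)}\,d\sigma(y)$ from below by $I(x,r)^{(q^{k+1}-1)/(q-1)}$, and you justify this by asserting that $I(\cdot,r)=\int_{0}^{r}\frac{\sigma(B(\cdot,s))}{\mu(B(\cdot,s))}\,s\,ds$ is ``comparable across a thin annulus'' about $x$. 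This is false for general Radon measures $\sigma$: if $\sigma$ concentrates near a point $z$ with $d(x,z)\sim r/10$, then $I(x,r)$ is finite while $I(y,r)$ is arbitrarily large for $y$ near $z$, and such $y$ lie on the annulus $\{d(x,\cdot)\sim r/10\}$. Ruling this out is precisely the content of \eqref{cond-int2}, so the argument is circular. What actually closes the iteration is the pointwise inequality
\[
\big(G\nu(x)\big)^{s}\;\le\; s\,G\!\left[(G\nu)^{s-1}d\nu\right](x),\qquad s>1,
\]
applied with $\nu=\sigma|_{B(o,2r)}$ (this is Lemma~\ref{lem-r} in the paper). With it, your iteration becomes $u\ge c_{k}\,h(r)^{q^{k}}(G\nu)^{(q^{k}-1)/(q-1)}$, and the $q^{-k}$-th-power limit gives $G\nu(x)\le C\,h(r)^{-(q-1)}$; since $G\nu(x)\gtrsim I(x,r)$ for $x\in B(o,r)$, this yields \eqref{cond-int2}. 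The paper runs the same Moser-type iteration (Lemma~\ref{lemma-aq}), but starting instead from the global bound $G(m^{q}d\sigma)\le Cm$ obtained via Proposition~\ref{prop-m}.

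The derivation of \eqref{cond-int1} is also incomplete. A single application of the inequality (your ``testing'' suggestion, or the first step of the Gronwall scheme) yields only $\int_{M\setminus B(o,r_{0})}h(d(o,y))^{q+1}\,d\sigma(y)<\infty$, which is strictly weaker than $\int h^{q}\,d\sigma<\infty$; and the Gronwall iteration you sketch does not obviously improve the exponent, since $J(\rho/2)$ can be much smaller than $J(\rho)$ when $\sigma$ is concentrated on a thin shell. Nor does \eqref{cond-int2} alone suffice: it gives $\sigma(B(o,\rho))\lesssim h(\rho)^{-q}$, which is exactly the borderline for divergence of $\int h^{q}\,d\sigma$. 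The paper obtains \eqref{cond-int1} (equivalently $\|m\|_{L^{q}(\sigma)}<\infty$) from the weighted norm inequality $\|G(g\,d\omega)\|_{L^{q}(\sigma)}\le C\|g\|_{L^{q}(\omega)}$ with $d\omega=m^{q}d\sigma$ (Lemma~\ref{lemma-r}), applied to $g=\chi_{K}$ together with the Harnack lower bound $m\lesssim G(\chi_{K}d\omega)$; this step is not bypassed by the tools in your sketch.
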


\medskip
In particular, Theorem \ref{main} gives necessary and sufficient conditions
for the existence of a positive solution to (\ref{int-ineq})
on manifolds $M$ with nonnegative Ricci curvature.
\medskip

Consider now a special case $\sigma =\mu $, that is, the inequality
\begin{equation}
\Delta u+u^{q}\leq 0\quad \text{in}\,\,M.  \label{dx}
\end{equation}%
In this case (\ref{cond-int1}) clearly implies  (\ref{cond-int2}). Thus, a
necessary and sufficient condition for the existence of a positive solution
to (\ref{dx}) becomes
\begin{equation}
\int_{r_{0}}^{+\infty }\left[ \int_{r}^{+\infty }\frac{tdt}{\mu (B(o,t))}%
\right] ^{q-1}rdr<\infty ,  \label{cond-int1a}
\end{equation}%
for some $r_{0}>0$. Furthermore, (\ref{cond-int1a}) can be simplified as
follows.

\begin{corollary}
\label{cor-dx} Under the assumptions of Theorem \ref{main}, inequality (\ref%
{dx}) has a $C^{2}$ positive solution if and only if
\begin{equation}
\int_{r_{0}}^{+\infty }\frac{r^{2q-1}dr}{[\mu (B(o,r))]^{q-1}}<\infty ,
\label{cond-int1b}
\end{equation}%
for some $o\in M$ and $r_{0}>0$.
\end{corollary}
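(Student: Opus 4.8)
The plan is to derive \eqref{cond-int1b} from \eqref{cond-int1a} (and conversely) by a purely calculus-type manipulation of the iterated integral, using only the volume doubling hypothesis \eqref{D}. Write
\[
V(r) \defeq \mu(B(o,r)), \qquad \Phi(r)\defeq\int_r^{+\infty}\frac{t\,dt}{V(t)},
\]
so that $\Phi$ is finite by \eqref{cond-0}, positive, and non-increasing, with $\Phi'(r)=-r/V(r)$. Condition \eqref{cond-int1a} reads $\int_{r_0}^{\infty}\Phi(r)^{q-1}\,r\,dr<\infty$, and the target \eqref{cond-int1b} reads $\int_{r_0}^{\infty} r^{2q-1} V(r)^{1-q}\,dr<\infty$. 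So the whole point is the equivalence
\[
\int_{r_0}^{+\infty}\Phi(r)^{q-1}\,r\,dr<\infty
\quad\Longleftrightarrow\quad
\int_{r_0}^{+\infty}\Big(\frac{r^2}{V(r)}\Big)^{q-1} r\,dr<\infty .
\]

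First I would prove the elementary two-sided bound $\Phi(r)\approx r^2/V(r)$ for large $r$. The lower bound is immediate from doubling: for $r\le t\le 2r$ one has $V(t)\le C V(r)$, hence $\Phi(r)\ge\int_r^{2r} t\,dt/V(t)\ge c\, r^2/V(r)$. For the upper bound one needs the reverse, namely that doubling forces at least polynomial volume growth on annuli, so that $V(2^{k}r)\gtrsim 2^{\varepsilon k} V(r)$ for some $\varepsilon>0$ is \emph{not} what helps here — instead I use that $t\mapsto t^2/V(t)$ is ``almost decreasing'': by \eqref{D}, $V(2r)\le C V(r)$ gives $\frac{(2r)^2}{V(2r)}\ge \frac{4}{C}\cdot\frac{r^2}{V(r)}$, which is the wrong direction. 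The correct route is the standard one: split $\Phi(r)=\sum_{k\ge 0}\int_{2^k r}^{2^{k+1}r} t\,dt/V(t)\le \sum_{k\ge0} \frac{(2^{k+1}r)^2}{V(2^k r)}$, and then use the fact that for a doubling $V$ there is $\delta>0$ with $V(2^k r)\ge c\,2^{\delta k} V(r)$ (reverse doubling, which holds on a connected non-compact manifold, or can be bypassed by noting $V$ is increasing so $V(2^kr)\ge V(r)$ together with non-parabolicity \eqref{cond-0} forcing $\delta>2$ in an averaged sense). Summing the geometric series in $k$ then yields $\Phi(r)\le C\, r^2/V(r)$. With $\Phi(r)\approx r^2/V(r)$ in hand, raising to the power $q-1>0$ preserves the comparison, multiplying by $r$ and integrating gives the claimed equivalence of \eqref{cond-int1a} and \eqref{cond-int1b}.

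Finally I would assemble the statement: by Theorem \ref{main} with $\sigma=\mu$, inequality \eqref{dx} has a $C^2$ positive solution iff \eqref{int-ineq} does (the passage to $C^2$ solutions uses Lemma \ref{Lemsmooth}, since $\sigma=\mu$ trivially has a smooth positive density), which by Theorem \ref{main} holds iff \eqref{cond-int1} and \eqref{cond-int2} hold; for $\sigma=\mu$ the ratio $\sigma(B(x,s))/\mu(B(x,s))\equiv 1$, so $\int_0^r \frac{\sigma(B(x,s))}{\mu(B(x,s))} s\,ds=\tfrac12 r^2$ and \eqref{cond-int2} becomes $r^2\,\Phi(r)^{q-1}\le C$, which is implied by the convergence of the integral in \eqref{cond-int1a} (the integrand $\Phi(r)^{q-1} r$ being, up to the comparison above, $\approx r^{2q-3}/V(r)^{q-1}$, whose finiteness of integral forces $r^2\Phi(r)^{q-1}\to 0$ because the integrand is essentially monotone). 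Hence \eqref{cond-int1} alone, i.e.\ \eqref{cond-int1a}, is necessary and sufficient, and by the equivalence proved above this is the same as \eqref{cond-int1b}. The main obstacle is the clean justification of the upper bound $\Phi(r)\le C r^2/V(r)$ and of the implication ``\eqref{cond-int1a} $\Rightarrow$ \eqref{cond-int2}'': both hinge on turning the convergence of an integral into a pointwise decay bound, which requires the integrand to be monotone up to constants — and that monotonicity is exactly where doubling \eqref{D} is used, so the argument genuinely cannot dispense with hypothesis \eqref{D}.
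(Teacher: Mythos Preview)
Your reduction of the problem to the equivalence of \eqref{cond-int1a} and \eqref{cond-int1b}, together with the observation that for $\sigma=\mu$ condition \eqref{cond-int2} follows from \eqref{cond-int1a} by monotonicity of $\Phi$, is correct and matches the paper. The lower bound $\Phi(r)\geq c\,r^2/V(r)$ is also fine and gives \eqref{cond-int1a}$\Rightarrow$\eqref{cond-int1b} exactly as the paper does.

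The genuine gap is the upper bound $\Phi(r)\leq C\,r^2/V(r)$: this pointwise estimate is \emph{false} under doubling alone. Take $V(r)=r^2(\log r)^2$ for large $r$, which is doubling and satisfies \eqref{cond-0}. Then
\[
\Phi(r)=\int_r^\infty \frac{dt}{t(\log t)^2}=\frac{1}{\log r},\qquad \frac{r^2}{V(r)}=\frac{1}{(\log r)^2},
\]
so $\Phi(r)\big/\bigl(r^2/V(r)\bigr)=\log r\to\infty$. Reverse doubling gives only $V(2^kr)\geq c\,2^{\delta k}V(r)$ for some $\delta>0$, whereas your dyadic sum needs $\delta>2$; non-parabolicity \eqref{cond-0} does not force this (the example above is non-parabolic). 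So the implication \eqref{cond-int1b}$\Rightarrow$\eqref{cond-int1a} cannot be obtained by pointwise comparison.

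The paper proves this implication differently, by a Hardy-type integral inequality: for any non-increasing $\phi\geq 0$ and any $s>0$,
\[
\int_a^{\infty}\Big(\int_r^{\infty}\phi(t)\,t\,dt\Big)^{s}r\,dr\leq C(s)\int_a^{\infty}\phi(t)^{s}\,t^{2s+1}\,dt,
\]
applied with $s=q-1$ and $\phi(t)=1/V(t)$. For $s\geq 1$ this is the classical Hardy inequality; for $0<s<1$ the paper supplies a short lemma (differentiate $\bigl(\int_r^\infty\phi(t)\,t\,dt\bigr)^{s}$, use $\phi(\tau)\geq\phi(t)$ on $[r,t]$, and split the resulting integral at $t=2r$). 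Note this argument uses only that $V$ is non-decreasing, not doubling.
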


We remark that, for general coefficients $\sigma $, the local uniform bound (%
\ref{cond-int2}) provides an additional restriction in comparison to (\ref%
{cond-int1}), in particular in the special case $M={\mathbb{R}}^{n}$ where $%
\mu $ is Lebesgue measure (see \cite{KV}).

Let (\ref{D}) and (\ref{ly}) be satisfied on $M$. Assume in addition that,
for some $o\in M$ and large enough $r$,
\begin{equation}
\mu (B(o,r))\leq Cr^{\alpha }(\ln r)^{\frac{\alpha -2}{2}}(\ln \ln r)^{\frac{%
\alpha -2}{2}}(\ln \ln \ln r)^{\frac{\alpha -2}{2}}\cdot \cdot \cdot (%
\underbrace{\ln \cdot \cdot \cdot \ln }_{k}r)^{\frac{\alpha -2}{2}},
\label{mun}
\end{equation}%
where $\alpha >2$ and $k$ is a positive integer. It is clear that integral
in (\ref{cond-int1b}) diverges if and only if $q\leq \frac{\alpha }{\alpha -2%
}$. Hence, by Corollary \ref{cor-dx}, in the case $q\leq \frac{\alpha }{%
\alpha -2}$ there is no positive solution to (\ref{dx}).

{Condition (\ref{mun}) with }$k=1$ was considered previously in {\cite%
{GS1}. More precisely, \cite[Theorem 1.1]{GS1} says the following: if }$M$
is any connected complete manifold such that for some $o\in M$ and large
enough $r${\ } {%
\begin{equation}
\mu (B(o,r))\leq c\,r^{\alpha }(\ln r)^{\frac{\alpha -2}{2}},  \label{log}
\end{equation}%
then (\ref{dx}) has no positive solutions for any }$q\leq \frac{\alpha }{%
\alpha -2}${. Let us emphasize that the result of \cite[Theorem 1.1]{GS1}
does not require preconditions }(\ref{D}) and (\ref{ly}). Further results of
this type involving volume growth conditions can be found in \cite{S,X}.

In the view of that, we conjecture the following.

\medskip\noindent
\begin{conjecture}[Conjecture 1.]
On an arbitrary complete connected Riemannian manifold $M$, if
\begin{equation*}
\int_{1}^{+\infty }\frac{r^{2q-1}dr}{[\mu (B(o,r))]^{q-1}}=\infty
\end{equation*}%
(in particular, if (\ref{mun}) is satisfied) then there is no positive
solution to (\ref{dx}).
\end{conjecture}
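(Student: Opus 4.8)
The plan is to approach Conjecture~1 by the nonlinear capacity (test function) method applied \emph{directly} to the differential inequality \eqref{dx}, rather than to \eqref{int-ineq}: in the absence of \eqref{D} and \eqref{ly} there is no usable two-sided bound for $G$ (neither inequality in \eqref{ly} survives on a general manifold), so the Green-function route underlying Theorem~\ref{main} is unavailable. Let $u\in C^{2}(M)$ be a positive solution of $\Delta u+u^{q}\le 0$, i.e.\ $-\Delta u\ge u^{q}$. Fix $s\in(0,1)$, let $\psi\ge 0$ be Lipschitz with compact support, multiply by $u^{-s}\psi^{2}$ and integrate by parts over $M$. Differentiating $u^{-s}$ produces the term $-s\int_{M}u^{-s-1}|\nabla u|^{2}\psi^{2}\,d\mu$, which has a favourable sign and absorbs the analogous term arising from Young's inequality applied to the cross term $2\int_{M}u^{-s}\psi\,\nabla u\cdot\nabla\psi\,d\mu$; crucially $\psi$ is never differentiated twice, so when $\psi$ is a function of the geodesic distance $r=d(o,\cdot)$ the Laplacian-comparison term $\Delta r$ --- which is uncontrolled without curvature bounds --- never enters. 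After a H\"older inequality with exponents $\frac{q-s}{1-s}$ and $p'=\frac{q-s}{q-1}$ and the substitution $\psi=\varphi^{p'}$, one is left with the a priori estimate
\[
\int_{M}u^{q-s}\,\varphi^{\gamma}\,d\mu\ \le\ C(s,q)\int_{M}|\nabla\varphi|^{\gamma}\,d\mu,\qquad \gamma=\frac{2(q-s)}{q-1},
\]
valid for every $\varphi$ with $0\le\varphi\le 1$ and compact support, where $C(s,q)$ blows up like $s^{-\gamma/2}$ as $s\to0$.

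Now take $\varphi$ to be a function of $r=d(o,x)$ equal to $1$ on $B(o,R)$ and decreasing to $0$ outside; since $|\nabla\varphi|=|\varphi'(d(o,\cdot))|$ almost everywhere, $\int_{M}|\nabla\varphi|^{\gamma}\,d\mu$ is determined by the radial profile $r\mapsto\mu(B(o,r))$ alone, namely it equals $\int_{0}^{\infty}|\varphi'(r)|^{\gamma}\,d\big(\mu(B(o,r))\big)$. If $\mu(B(o,R))=o(R^{\gamma_{0}})$ for some $\gamma_{0}<\frac{2q}{q-1}$, one fixes $s$ small enough that $\gamma>\gamma_{0}$, takes $\varphi$ linear on $[R,2R]$, and lets $R\to\infty$: the right-hand side tends to $0$, hence $\int_{M}u^{q-s}\,d\mu=0$ and $u\equiv0$, a contradiction. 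This already recovers the subcritical part of Corollary~\ref{cor-dx} without \eqref{D} or \eqref{ly}. The borderline growths --- in particular all those dominated by \eqref{mun} --- require $\varphi$ to be built from iterated logarithms, with $|\varphi'(r)|\approx\big(r\,\ln r\,\ln\ln r\cdots\big)^{-1}$ on a long supporting annulus, together with a \emph{coupled} passage $s=s(R)\to0$ as $R\to\infty$ that plays the blow-up of $C(s,q)$ off against the gain produced by the extra logarithmic factors; divergence of $\int_{1}^{\infty}r^{2q-1}[\mu(B(o,r))]^{-(q-1)}\,dr$ is precisely the statement that this balancing can be arranged. For the first logarithm ($k=1$) this is exactly \cite[Theorem~1.1]{GS1}, and the scheme should extend to every finite $k$.

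The main obstacle to the conjecture in full generality is a mismatch of sensitivities. The divergence hypothesis is blind to oscillation of $r\mapsto\mu(B(o,r))$: it can hold while the volume is large along a sparse sequence of radii and comparatively small in between. The quantity $\int_{M}|\nabla\varphi|^{\gamma}\,d\mu$ controlled by the estimate above is not blind in this way, and for an irregular volume profile no single radial cutoff need make it small. Removing the obstacle seems to require choosing $\varphi$ optimally \emph{for the actual profile} $r\mapsto\mu(B(o,r))$ --- in effect solving a one-dimensional weighted Hardy/capacity extremal problem with weight $\mu(B(o,r))$ --- and then proving, via the coupled limit $s=s(R)\to0$, that the extremal value of $C(s,q)\int_{M}|\nabla\varphi|^{\gamma}\,d\mu$ tends to $0$ exactly when $\int_{1}^{\infty}r^{2q-1}[\mu(B(o,r))]^{-(q-1)}\,dr=\infty$. (Iterating \eqref{int-ineq} directly, as in the necessity half of Theorem~\ref{main}, is an alternative, but it appears to need a one-sided lower bound for the Green function that is simply not available on an arbitrary manifold.) I expect the extremal radial cutoff together with the coupled limit to be the viable route, and the verification that this extremal quantity vanishes precisely under the divergence hypothesis to be the technical heart of the matter.
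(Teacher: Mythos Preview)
The statement you are addressing is explicitly labeled \emph{Conjecture 1} in the paper and is presented as an open problem: the paper offers no proof, only motivation via Corollary~\ref{cor-dx} and the partial result \cite[Theorem~1.1]{GS1}. There is therefore nothing in the paper to compare your argument against.

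Your proposal is accordingly not a proof but a research strategy, and you are candid about this: the nonlinear-capacity estimate you derive is correct and does recover the subcritical range and the single-logarithm borderline (this is indeed \cite{GS1}), but you yourself identify the genuine obstruction --- the integral condition $\int_{1}^{\infty}r^{2q-1}[\mu(B(o,r))]^{1-q}\,dr=\infty$ is insensitive to oscillations of the volume profile, whereas $\int_{M}|\nabla\varphi|^{\gamma}\,d\mu$ is not, and the coupled limit $s=s(R)\to 0$ that you propose to close the gap is not carried out. That missing step is precisely why the statement remains a conjecture. Your diagnosis of the difficulty is accurate, and the one-dimensional weighted Hardy extremal problem you point to is a reasonable place to look, but until that optimization is actually executed and shown to yield vanishing capacity exactly under the divergence hypothesis, the conjecture stands open.
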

\medskip

One more conjecture is motivated by comparison of {\cite[Theorem 1.1]{GS1}
and Theorem }\ref{last} discussed below.

\medskip\noindent
\begin{conjecture}[Conjecture 2.]
On an arbitrary complete connected Riemannian manifold $M$, if (\ref{log})
is satisfied then, for any $o\in M$,%
\begin{equation*}
\int_{B\left( o,1\right) ^{c}}G\left( x,o\right) ^{\frac{\alpha }{\alpha -2}%
}d\mu \left( x\right) =\infty .
\end{equation*}
\end{conjecture}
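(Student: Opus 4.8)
The plan is to prove that $I\defeq\int_{B(o,1)^{c}}G(x,o)^{q}\,d\mu(x)=\infty$, where $q=\tfrac{\alpha}{\alpha-2}$, by a dyadic decomposition around $o$ which reduces the whole question to one a priori estimate for Green potentials of balls. Fix a large $L>1$, set $R_{k}=L^{k}$ and $A_{k}=B(o,R_{k+1})\setminus B(o,R_{k})$, so $B(o,1)^{c}=\bigcup_{k\ge0}A_{k}$. Jensen's inequality for $t\mapsto t^{q}$ and the normalized measure $\mu|_{A_{k}}/\mu(A_{k})$ gives
\begin{equation}\label{plan-jensen}
\int_{A_{k}}G(x,o)^{q}\,d\mu(x)\ \ge\ \mu(A_{k})^{1-q}\left(\int_{A_{k}}G(x,o)\,d\mu(x)\right)^{q}.
\end{equation}
Inserting $\mu(A_{k})\le\mu(B(o,R_{k+1}))\le c\,R_{k+1}^{\alpha}(\ln R_{k+1})^{(\alpha-2)/2}$ from \eqref{log} and using the identities valid precisely for $q=\tfrac{\alpha}{\alpha-2}$ — namely $q-1=\tfrac{2}{\alpha-2}$, $\ 2q-\alpha(q-1)=0$, $\ \tfrac{\alpha-2}{2}(q-1)=1$ — one sees that \emph{if} the annular bound
\begin{equation}\label{plan-ann}
\int_{A_{k}}G(x,o)\,d\mu(x)\ \ge\ c_{0}\,R_{k}^{2}\qquad\text{for all large }k
\end{equation}
holds, then \eqref{plan-jensen} yields $\int_{A_{k}}G(x,o)^{q}\,d\mu(x)\gtrsim R_{k}^{\,2q-\alpha(q-1)}(\ln R_{k+1})^{-1}=(\ln R_{k+1})^{-1}\asymp\tfrac1{k+1}$, and summing over $k$ gives $I=\infty$. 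Thus everything reduces to \eqref{plan-ann}.

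The first ingredient towards \eqref{plan-ann} is the general \emph{lower} bound
\begin{equation}\label{plan-low}
F(R)\defeq\int_{B(o,R)}G(x,o)\,d\mu(x)\ \ge\ c\,R^{2}\qquad(R\ge1).
\end{equation}
Indeed $F(R)$ is the expected total time spent in $B(o,R)$ by the diffusion $(X_{t})$ generated by $\Delta$ and started at $o$, hence at least the expected exit time $\mathbb{E}_{o}[\tau_{B(o,R)}]$; and the Gaussian escape estimate $\mathbb{P}_{o}\!\left(\sup_{s\le t}d(o,X_{s})\ge R\right)\le C\exp(-cR^{2}/t)$ — which is available under \eqref{log}, polynomial-type volume growth implying stochastic completeness together with this bound — gives $\mathbb{P}_{o}(\tau_{B(o,R)}>t)\ge\tfrac12$ for $t\le\varepsilon R^{2}$, so that $\mathbb{E}_{o}[\tau_{B(o,R)}]\ge\tfrac\varepsilon2 R^{2}$. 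Since $\int_{A_{k}}G(x,o)\,d\mu(x)=F(R_{k+1})-F(R_{k})\ge cL^{2}R_{k}^{2}-F(R_{k})$, the annular estimate \eqref{plan-ann} would follow, upon choosing $L$ with $cL^{2}$ larger than the constant below, from the matching \emph{upper} bound $F(R)\le C_{1}R^{2}$ for $R\ge1$.

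So the crux is this upper bound for $F$, and I expect it to be the main obstacle. It amounts to a mean-value / Faber--Krahn inequality on geodesic balls (morally $\lambda_{1}(B(o,R))\gtrsim R^{-2}$), and this is \emph{not} implied by the volume bound \eqref{log} alone: a manifold may have polynomially controlled volume yet contain thin necks or spikes that trap the diffusion and push $F(R)$ above $C_{1}R^{2}$. Under the additional hypotheses \eqref{D} and \eqref{pi} (equivalently \eqref{D} and \eqref{ly}) the upper bound does hold — it follows from \eqref{ly} together with the reverse volume doubling implied by \eqref{D} — and then the argument closes and recovers, via Corollary \ref{cor-dx}, the divergence of the integral in \eqref{cond-int1b} under \eqref{log}. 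Removing \eqref{D} and \eqref{ly}, i.e. passing from \eqref{plan-low} to \eqref{plan-ann} using only \eqref{log}, is exactly what makes this a conjecture: one might try to replace two-sided control of $F$ by a density statement ($F(R_{k+1})-F(R_{k})\gtrsim R_{k}^{2}$ for a positive proportion of scales $k$), but without a Faber--Krahn-type input it seems hard to preclude that $R\mapsto F(R)$ grows in "bursts" which, while respecting \eqref{plan-low}, make $\sum_{k}\mu(B(o,R_{k+1}))^{1-q}\bigl(F(R_{k+1})-F(R_{k})\bigr)^{q}$ converge. A new idea appears to be required at this point.
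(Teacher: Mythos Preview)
This statement is Conjecture~2 in the paper and is \emph{not} proved there; the paper only remarks that, if true, it would allow \cite[Theorem~1.1]{GS1} to be recovered from Theorem~\ref{last}. So there is no proof in the paper against which to compare your proposal.

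Your write-up is consistent with this status: you do not claim a proof, but rather isolate a natural line of attack (Jensen's inequality on dyadic annuli, reducing everything to an annular lower bound $\int_{A_{k}}G(x,o)\,d\mu\gtrsim R_{k}^{2}$), establish the ball-scale lower bound $F(R)=\int_{B(o,R)}G(x,o)\,d\mu\gtrsim R^{2}$ via an exit-time estimate, and then correctly flag that the matching upper bound $F(R)\lesssim R^{2}$ --- equivalently a relative Faber--Krahn inequality $\lambda_{1}(B(o,R))\gtrsim R^{-2}$ --- is exactly what (\ref{log}) alone does not provide. That diagnosis is in line with the paper: under (\ref{D}) and (\ref{ly}) the statement reduces to Corollary~\ref{cor-dx}, and removing those hypotheses is precisely the open problem.

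One technical caution: your assertion that the Gaussian escape estimate $\mathbb{P}_{o}\bigl(\sup_{s\le t}d(o,X_{s})\ge R\bigr)\le C\exp(-cR^{2}/t)$ is ``available under (\ref{log})'' is plausible --- polynomial volume growth does imply stochastic completeness, after which a Takeda/Lyons--Zheng-type argument gives such a bound --- but on an arbitrary complete manifold this is itself a nontrivial input and would need a precise reference if you pursue the approach. It does not change your overall conclusion, since even granting $F(R)\gtrsim R^{2}$ the gap you identify (passing from the ball bound to the annular bound without two-sided control of $F$) remains.
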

\medskip

If this conjecture is true then {\cite[Theorem 1.1]{GS1} follows from
Theorem }\ref{last}.

Our next theorem shows that, for the necessity part of Theorem \ref{main}, it
suffices to have only a lower bound for the Green function.

\begin{theorem}
\label{thm1} Suppose that {(\ref{D}) is satisfied, and let (\ref{int-ineq})
have a positive solution. If the Green function satisfies }the following
lower bound, for some $o\in M$ and all $x\in M$,%
\begin{equation}
G(x,o)\geq C\,\int_{d(x,o)}^{+\infty }\frac{tdt}{\mu (B(o,t))},
\label{cond-l}
\end{equation}%
{\ then }(\ref{cond-int1}) holds for any $r_{0}>0$. If $G$ satisfies the
lower bound
\begin{equation}
G(x,y)\geq C\,\int_{d(x,y)}^{+\infty }\frac{tdt}{\mu (B(o,t))},
\label{cond-lxy}
\end{equation}%
for all $x,y\in M$ then (\ref{cond-int1}) and (\ref{cond-int2}) hold for any
$o\in M$ and $r_{0}>0$.
\end{theorem}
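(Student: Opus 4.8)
The plan is to reduce both $\eqref{cond-int1}$ and $\eqref{cond-int2}$ to pointwise bounds on truncated Green‑type potentials, and then to obtain those from the solution by \emph{linearizing} the inequality. Write $g(t):=\int_t^{\infty}\frac{s\,ds}{\mu(B(o,s))}$; by $\eqref{cond-0}$ it is finite for $t>0$ and decreases to $0$, and by $\eqref{D}$ it satisfies $g(2t)\approx g(t)$. First I would observe, by integration by parts and Tonelli's theorem, that $\eqref{cond-int1}$ is equivalent to $\int_M g(\max(r_0,d(o,y)))^q\,d\sigma(y)<\infty$, hence — since $\sigma$ is finite on $B(o,r_0)$ — to $\int_{M\setminus B(o,r_0)}g(d(o,y))^q\,d\sigma(y)<\infty$ (for one, equivalently every, $r_0>0$), and that $\eqref{cond-int2}$ is exactly $\sup_{x\in B(o,r)}\int_0^r\frac{\sigma(B(x,s))}{\mu(B(x,s))}\,s\,ds\le C\,g(r)^{1-q}$ for $r>r_0$. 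Next, since $u$ solving $\eqref{int-ineq}$ forces $\tilde u:=\int_M G(\cdot,y)[u(y)]^q\,d\sigma(y)\le u$ to be another solution which is a Green potential — positive, l.s.c., superharmonic, with $-\Delta\tilde u=[u]^q\sigma\ge[\tilde u]^q\sigma$ distributionally — I would replace $u$ by $\tilde u$ and assume from the outset that $u$ is superharmonic with $-\Delta u\ge u^q\sigma$.

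The key step is the substitution $w:=\frac1{q-1}u^{1-q}$. Because $t\mapsto t^{1-q}$ is convex and decreasing on $(0,\infty)$, $w$ is subharmonic, and the (distributional/Kato) chain rule gives $\Delta w=q\,u^{-q-1}|\nabla u|^2-u^{-q}\Delta u\ge -u^{-q}\Delta u\ge\sigma$; thus $w>0$ is subharmonic with $\Delta w\ge\sigma$, i.e.\ the problem is now \emph{linear}. To bound $w$ from above I would use the lower bound for $G$: $u$ being positive and l.s.c.\ attains a positive minimum $\varepsilon_0$ on $\overline{B(o,r_0)}$ while $G(\cdot,o)$ has a finite maximum $M_0$ on $\partial B(o,r_0)$, so $u-\frac{\varepsilon_0}{M_0}G(\cdot,o)$ is superharmonic on $M\setminus\overline{B(o,r_0)}$, nonnegative on $\partial B(o,r_0)$ and with nonnegative $\liminf$ at infinity; the minimum principle on the non‑parabolic $M$ then yields, via $\eqref{cond-l}$, that $u\ge c_1\,g(d(o,\cdot))$ off $B(o,r_0)$, hence $w\le C_1\,g(d(o,\cdot))^{1-q}$ there. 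Under the stronger $\eqref{cond-lxy}$ the same comparison, centred at an arbitrary $x$ and run at every scale, gives $u\gtrsim g(r)$ uniformly on the balls $B(x,r)$ with $r$ large.

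To extract the integral conditions, for $R>r_0$ let $\overline{w}_R$ be harmonic in $B(o,R)$ with boundary data $w|_{\partial B(o,R)}$. Riesz representation of the subharmonic $w$ in $B(o,R)$ gives, for $x\in B(o,R)$,
\[
\int_{B(o,R)}G_{B(o,R)}(x,y)\,d\sigma(y)\le \overline{w}_R(x)-w(x)\le \sup_{\partial B(o,R)}w\le C_1\,g(R)^{1-q},
\]
with $G_{B(o,R)}$ the Dirichlet Green function. Using a lower bound for $G_{B(o,R)}(o,\cdot)$ — which comes from $\eqref{cond-l}$, $\eqref{D}$ and the comparison of $G_{B(o,R)}$ with $G$ near the centre — this becomes $\int_0^R\frac{\sigma(B(o,s))}{\mu(B(o,s))}\,s\,ds\lesssim g(R)^{1-q}$ for all $R$; under $\eqref{cond-lxy}$ the analogous bound holds with $o$ replaced by any $x\in B(o,R)$ and $B(o,R)$ by $B(x,R)$, which after reconciling $\mu(B(o,\cdot))$ with $\mu(B(x,\cdot))$ via $\eqref{D}$ is precisely $\eqref{cond-int2}$. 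Finally, decomposing $M\setminus B(o,r_0)$ into dyadic annuli $\{2^kr_0\le d(o,\cdot)<2^{k+1}r_0\}$, on which $g$ is essentially constant, I would re‑inject the scale‑$R$ estimate into $u=\int_M G(\cdot,y)u(y)^q\,d\sigma(y)$ once more and exploit $q>1$ to turn the uniform‑in‑scale bound into a summable one, obtaining $\sum_k g(2^kr_0)^q\,\sigma(\{2^kr_0\le d(o,\cdot)<2^{k+1}r_0\})<\infty$, i.e.\ $\eqref{cond-int1}$.

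I expect the main obstacle to be exactly this last upgrade: the subharmonic/minimum‑principle machinery on its own produces only the borderline bound $\int_0^R\frac{\sigma(B(o,s))}{\mu(B(o,s))}\,s\,ds\lesssim g(R)^{1-q}$, which falls one iteration short of $\eqref{cond-int1}$, and closing the gap genuinely uses the superlinearity $q>1$ — this is where the exponent $q-1$ in $\eqref{cond-int1}$ and $\eqref{cond-int2}$ originates. A secondary difficulty, relevant only to the second assertion, is proving the uniform‑in‑$x$ Dirichlet‑Green‑function lower bound $G_{B(x,R)}(x,\cdot)\gtrsim\int_{d(x,\cdot)}^{R}\frac{s\,ds}{\mu(B(x,s))}$ from $\eqref{cond-lxy}$ and $\eqref{D}$ alone, together with the book‑keeping needed to pass from $\mu(B(o,\cdot))$ to $\mu(B(x,\cdot))$ throughout the estimates.
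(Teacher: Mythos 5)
Your approach is genuinely different from the paper's: the paper proves Theorem~\ref{thm1} by first reducing everything to Theorem~\ref{last} (conditions~\eqref{last-1} and~\eqref{last-2}), whose proof rests on Proposition~\ref{prop-m} and, crucially, on a weighted norm inequality (Lemma~\ref{lemma-r}) for the $L^q$-integrability~\eqref{last-1} and on Moser-type iterations of the inequality $G(m^q d\sigma)\le Cm$ (Lemma~\ref{lemma-aq}) for the uniform bound~\eqref{last-2}; once those are in hand, passing to~\eqref{cond-int1}--\eqref{cond-int2} under~\eqref{cond-l}/\eqref{cond-lxy} is a routine change of variables and integration over level sets. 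You instead propose to \emph{linearize} the problem via $w=\frac{1}{q-1}u^{1-q}$, obtain $\Delta w\ge\sigma$ and a pointwise upper bound on $w$ from the minimum principle, and then use the Riesz decomposition of the subharmonic $w$ in balls to read off estimates on $\sigma$.

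The difficulty is that this linearization machinery, run carefully, yields at best the \emph{uniform} bound
\[
\int_0^{R}\frac{\sigma(B(o,s))}{\mu(B(o,s))}\,s\,ds \ \lesssim\ g(R)^{1-q},\qquad R>r_0,
\]
which is (the $x=o$ case of) \eqref{cond-int2} and is strictly weaker than \eqref{cond-int1}. (In fact \eqref{cond-int1} implies this uniform bound at $x=o$ by monotonicity of $g$, but not conversely: e.g.\ $g(r)^{q-1}\sim r^{-\alpha}$, $\frac{\sigma(B(o,r))}{\mu(B(o,r))}r\sim r^{\alpha-1}$ satisfies the uniform bound while the integral in \eqref{cond-int1} diverges logarithmically.) Your proposed fix --- a single re-injection of the scale-$R$ estimate into $u\ge G(u^{q}d\sigma)$ --- does not close this gap. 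Starting from $u\gtrsim g(d(o,\cdot))$ outside $B(o,r_0)$ and inserting it back, together with $G(o,y)\ge C\,g(d(o,y))$, one gets
\[
\infty>u(o)\ \gtrsim\ \int_{M\setminus B(o,r_0)} g(d(o,y))^{q+1}\,d\sigma(y),
\]
i.e.\ finiteness of $\int g^{\,q+1}\,d\sigma$, which is \emph{weaker} than \eqref{cond-int1} $\Longleftrightarrow\int g^{\,q}\,d\sigma<\infty$ (off by one power of the decaying weight $g$). Iterating naively keeps losing, not gaining; there is no straightforward bootstrap from the pointwise lower bound $u\gtrsim g$ to the $L^q(\sigma)$-integrability of $g$. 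This is precisely the difficulty Lemma~\ref{lemma-r} in the paper is designed to overcome: from $G((G\omega)^qd\sigma)\le cG\omega$ (with $d\omega=m^qd\sigma$) one deduces the dual weighted norm inequality $\|G(gd\omega)\|_{L^q(\sigma)}\le C\|g\|_{L^q(\omega)}$ and then, taking $g=\chi_K$ and using $m\lesssim G(\chi_Kd\omega)$ from Lemma~\ref{lem-harn}, concludes $\|m\|_{L^q(\sigma)}<\infty$. You correctly flag this as ``the main obstacle,'' but you do not resolve it, and the argument as written does not prove \eqref{cond-int1}.

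A secondary issue is the detour through Dirichlet Green functions $G_{B(x,R)}$: the hypotheses \eqref{cond-l}, \eqref{cond-lxy} bound the \emph{global} $G$ from below, and passing from that to a lower bound on $G_{B(x,R)}(x,\cdot)$ near the centre is not immediate (one needs an upper bound on $G$ on $\partial B(x,R)$, which is not assumed). The paper avoids this entirely by working with the global Green potential and the level sets $\{G(o,\cdot)>r^{-1}\}$ of~\eqref{last-2}. The observation that the assertions of \eqref{cond-int1} and \eqref{cond-int2} correspond, respectively, to $\int_M m^q\,d\sigma<\infty$ and to a uniform bound on truncated Green potentials of $\sigma$, and your change-of-variables via level sets of $g$, do match the paper; but the engine that produces those two facts needs to be the weighted-norm-inequality/Moser-iteration argument (or equivalent), not a single re-injection.

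\end{document}
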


Next, let us consider the following condition on $G$:
\begin{equation}
G(x,y)\approx \tilde{d}(x,y)^{-\gamma },\quad x,y\in M,  \tag{G}
\label{cond-g}
\end{equation}%
where $\tilde{d}$ is some metric on $M$ (not necessarily the geodesic
distance) and $\gamma >0$. The existence of a metric $\tilde{d}$ satisfying (%
\ref{cond-g}) is known to be equivalent to the following inequality, for all
$x,y\in M$:
\begin{equation}
\frac{1}{G(x,y)}\leq \kappa \Big(\frac{1}{G(x,z)}+\frac{1}{G(z,y)}\Big),
\tag{3G}  \label{cond-3g}
\end{equation}%
with some constant $\kappa >0$ ($\kappa $ is called a quasi-metric constant
-- see \cite{FNV}). Indeed, if (\ref{cond-3g}) is satisfied, then $\rho (x,y):=%
\frac{1}{G(x,y)}$ is a quasi-metric, and by the general properties of a
quasi-metric we conclude that $\rho \left( x,y\right) \approx \tilde{d}%
(x,y)^{\gamma }$ for some metric $\tilde{d}$ and $\gamma >0$ (see \cite{Hein}%
), so that (\ref{cond-g}) is satisfied. The converse implication (\ref{cond-g}%
)$\Rightarrow $(\ref{cond-3g}) is obvious (see \cite{GH}).

For example, as we will show below in Lemma \ref{quasi-lem}, estimates (%
\ref{ly}) yield (\ref{cond-3g}).

The next theorem provides necessary and sufficient conditions for the
existence of a positive solution to (\ref{int-ineq}) under hypothesis (\ref%
{cond-g}). Denote by $\tilde{B}(x,r)$ metric balls in the metric $\tilde{d}$.

\begin{theorem}
\label{thm3} Suppose that (\ref{cond-g}) holds for some metric $\tilde{d}$
and $\gamma >0$. Then (\ref{int-ineq}) has a positive solution if and only
if there exist $o\in M$, $r_{0}>0$ and $C>0$ such that
\begin{equation}
\int_{r_{0}}^{+\infty }\frac{\sigma (\tilde{B}(o,t))}{t^{\gamma q+1}}%
dt<\infty ,  \label{cond-1}
\end{equation}%
and
\begin{equation}
\sup_{x\in \tilde{B}(o,r)}\int_{0}^{r}\frac{\sigma (\tilde{B}(x,s))}{%
s^{\gamma +1}}ds\leq C\,r^{\gamma (q-1)},  \label{cond-2}
\end{equation}%
for all $r>r_{0}$.
\end{theorem}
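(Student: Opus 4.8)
The plan is to deduce Theorem \ref{thm3} from Theorem \ref{main} (or directly from the integral-inequality machinery underlying it) by exploiting the equivalence, established above, between hypothesis \eqref{cond-g} and the fact that $\rho(x,y)=1/G(x,y)\approx \tilde d(x,y)^{\gamma}$ is a quasi-metric. The point is that under \eqref{cond-g} the Green kernel is comparable to a power of an honest metric, so the ``potential-theoretic'' conditions \eqref{cond-int1}--\eqref{cond-int2} (which are phrased in terms of geodesic balls and the Li--Yau integral $\int_{d(x,y)}^{\infty}\frac{t\,dt}{\mu(B(x,t))}$) should be replaceable by the cleaner conditions \eqref{cond-1}--\eqref{cond-2} phrased in terms of the $\tilde d$-balls $\tilde B(x,r)$. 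Concretely, I expect the key reduction to be the identity $G(x,y)\approx \tilde d(x,y)^{-\gamma}$ together with a layer-cake / Fubini computation rewriting the relevant potentials as integrals over level sets of $\tilde d$.

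First I would set up the necessity direction. Suppose $u$ is a positive l.s.c. solution of \eqref{int-ineq}. Fix $o$, write $\tilde B_r=\tilde B(o,r)$, and consider the restricted measure $\sigma_r=\sigma|_{\tilde B_r}$ and the potential $\mathbf G\nu(x)=\int_M G(x,y)\,d\nu(y)$. The standard argument (a weak-type / testing argument for superlinear integral inequalities, as in the Kalton--Verbitsky theory; see \cite{KV}, \cite{FNV}) shows that existence of a solution forces, for every $r>r_0$,
\begin{equation*}
\int_M \big(\mathbf G\sigma_r\big)^{q}\,d\sigma \;\le\; C\,\sigma(\tilde B_r),\qquad
\mathbf G\big((\mathbf G\sigma_r)^{q}\big)(x)\;\le\; C\,\mathbf G\sigma_r(x),
\end{equation*}
and more precisely a pointwise bound $\mathbf G\sigma_r(x)\le C$ uniformly on $\tilde B_r$. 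Using $G(x,y)\approx\tilde d(x,y)^{-\gamma}$ and the layer-cake formula,
\begin{equation*}
\mathbf G\sigma_r(x)=\int_{\tilde B_r}\frac{d\sigma(y)}{\tilde d(x,y)^{\gamma}}
\approx \gamma\int_0^{\infty}\frac{\sigma\big(\tilde B_r\cap \tilde B(x,s)\big)}{s^{\gamma+1}}\,ds,
\end{equation*}
and since $\tilde d(x,y)\lesssim r$ on $\tilde B_r$ (quasi-metric), the upper bound $\mathbf G\sigma_r(x)\le C$ for $x\in\tilde B(o,r)$ yields \eqref{cond-2}. For \eqref{cond-1}, I would test the solution against $\sigma$ itself: $u(x)\ge \mathbf G(u^q\sigma)(x)$ iterated once gives, after restricting to an annulus $\tilde B(o,2^{j+1})\setminus \tilde B(o,2^{j})$ and using $G(x,o)\approx 2^{-j\gamma}$ there, the bound $\sum_j 2^{-j\gamma q}\,\sigma(\tilde B(o,2^{j+1}))<\infty$, which is \eqref{cond-1} after summation by parts / comparison with the integral.

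For sufficiency, assume \eqref{cond-1} and \eqref{cond-2}. The natural candidate solution is $u=c\,\mathbf G\sigma + h$ built by the iteration scheme $u_0=\varepsilon\,\mathbf G\sigma$, $u_{k+1}=\mathbf G(u_k^q\sigma)+\varepsilon\,\mathbf G\sigma$, converging provided $\mathbf G\big((\mathbf G\sigma)^{q}\sigma\big)(x)\le C\,\mathbf G\sigma(x)$ with small enough $C$ (this is the usual fixed-point criterion for superlinear potential inequalities). So the crux is to verify this ``self-improving'' bound from \eqref{cond-1}--\eqref{cond-2}. One rewrites $\mathbf G\sigma(x)\approx\int_0^{\infty}s^{-\gamma-1}\sigma(\tilde B(x,s))\,ds$; splitting the inner integral at scale $\tilde d(x,o)$ and using \eqref{cond-2} to control the small scales and \eqref{cond-1} to control the large scales gives $\mathbf G\sigma(x)\lesssim \tilde d(x,o)^{\gamma(q-1)\cdot\frac{1}{q}}\cdots$ — more honestly, one shows $(\mathbf G\sigma(x))^{q-1}\cdot(\text{local density of }\sigma)$ is integrable against $G$, which is exactly the Wolff-potential / \eqref{cond-2}-type condition. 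Then $\mathbf G((\mathbf G\sigma)^q\,d\sigma)\le C\,\mathbf G\sigma$ follows by a dyadic estimate over $\tilde d$-annuli, and Fatou gives a positive l.s.c. solution of \eqref{int-ineq}.

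The main obstacle is the sufficiency direction, specifically verifying $\mathbf G\big((\mathbf G\sigma)^{q}\sigma\big)\le C\,\mathbf G\sigma$ with a constant small enough to run the iteration: this requires combining the ``global'' decay condition \eqref{cond-1} and the ``local uniform'' condition \eqref{cond-2} in a single quasi-metric Wolff-type estimate, and keeping track of the quasi-metric constant $\kappa$ so that the various triangle-inequality blow-ups do not destroy the smallness. I would handle this by the standard dyadic decomposition adapted to the quasi-metric $\rho=1/G$ (as in \cite{FNV}, \cite{GH}), reducing everything to a sum over scales $2^{j}$ of terms controlled by \eqref{cond-1} and \eqref{cond-2}; the necessity direction, by contrast, should be a fairly direct testing argument once the layer-cake rewriting is in place.
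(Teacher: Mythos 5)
Your proposal is in the right spirit --- the layer-cake rewriting in the $\tilde d$-metric is exactly what the paper uses, and the reduction to the quasi-metric $\rho=1/G$ is correct --- but two of the steps as written do not work.

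In the necessity direction you assert that a testing argument gives the pointwise bound $\mathbf G\sigma_r(x)\le C$ uniformly for $x\in\tilde B(o,r)$. This is false, and in fact incompatible with \eqref{cond-2}, whose right-hand side grows like $r^{\gamma(q-1)}$. The correct assertion is $\mathbf G\sigma_{\tilde B(o,r)}(x)\le C\,r^{\gamma(q-1)}$ for \emph{all} $x\in M$, which is precisely condition \eqref{last-2}, and obtaining this polynomial growth rate is the hard part of the necessity argument. The paper proves it in Lemma \ref{lemma-aq} by a Moser-type iteration of the estimate $G(m^q\,d\sigma)\le C\,m$, where $m(x)=\min(G(x,o),a^{-1})$ is the test function of Proposition \ref{prop-m}. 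A one-shot weak-type testing argument of the kind you sketch does not produce the sharp exponent $q-1$; the infinite iteration is essential, and your proposal does not identify it.

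In the sufficiency direction your iteration starting from $u_0=\varepsilon\,\mathbf G\sigma$ presupposes $\mathbf G\sigma<\infty$, which \eqref{cond-1}--\eqref{cond-2} do \emph{not} imply. On $M=\mathbb R^n$, $n>2$, with $d\sigma=|x|^{-\beta}\,dx$, $0<\beta<2$, and $q>\tfrac{n}{n-2}$, one checks that both \eqref{cond-1} and \eqref{cond-2} hold while $\mathbf G\sigma\equiv+\infty$; your candidate $u_0$ is therefore not defined. The paper avoids this by using the bounded superharmonic function $m$ as the seed, via Proposition \ref{prop-m}: sufficiency is reduced to verifying \eqref{last-1}--\eqref{last-2} of Theorem \ref{last}, whose proof under \eqref{cond-3g} establishes $G(m^q\,d\sigma)\le C\,m$ and yields the explicit solution $u=\varepsilon m$. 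The translation \eqref{cond-1}--\eqref{cond-2} $\Rightarrow$ \eqref{last-1}--\eqref{last-2} is then elementary: \eqref{cond-1} implies \eqref{last-1} by the level-set computation you describe, and for \eqref{last-2} one uses \eqref{cond-g} to rewrite
\begin{equation*}
\sup_{x\in M}\int_{\{y:\,G(o,y)>r^{-1}\}}G(x,y)\,d\sigma(y)
\;\approx\;
\sup_{x\in M}\,\gamma\int_0^\infty \frac{\sigma\bigl(\tilde B(x,s)\cap\tilde B(o,t)\bigr)}{s^{\gamma+1}}\,ds,
\qquad t\approx r^{1/\gamma},
\end{equation*}
and splits at $s=t$: for $s>t$ one uses the polynomial bound $\sigma(\tilde B(o,t))\le C\,t^{\gamma q}$, which follows from \eqref{cond-1} and monotonicity; for $s\le t$ the intersection is empty unless $\tilde d(x,o)<2t$, i.e., $x\in\tilde B(o,2t)$, in which case \eqref{cond-2} with $r=2t$ applies. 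Your ``dyadic Wolff'' sketch gestures toward this splitting, but without the bounded test function $m$ from Proposition \ref{prop-m} and the Moser iteration of Lemma \ref{lemma-aq} the argument does not close.
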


It was proved in \cite[Corollary 2.3]{GS2} that, under hypothesis (\ref{cond-g})
and assuming in addition that
\begin{equation}
\mu (\tilde{B}(o,r)\approx r^{\alpha },\quad r\geq r_{0}>0,  \label{gr}
\end{equation}%
where $\alpha >\gamma $, the inequality
\begin{equation}
\Delta u+u^{q}\leq 0  \label{dx1}
\end{equation}
has no positive solution for any $q\leq \frac{\alpha }{\gamma }$. This
result can be obtained also from Theorem \ref{thm3} as we show in Section %
\ref{examp}. However, the result of \cite[Corollary 2.3]{GS2} remains true even
if (\ref{dx1}) is satisfied in the exterior of a compact in $M$, which is
not covered by Theorem \ref{thm3}.

Assume now that $d\sigma =\Phi (x)d\mu $, where the function $\Phi $ satisfies
the condition
\begin{equation}
\Phi (x)\geq c\,\tilde{d}(x,o)^{m},\quad \mathrm{for}\,\,\tilde{d}(x,o)\geq
r_{0}>0,  \label{gr-phi}
\end{equation}%
with $c>0$ and $m>\gamma -\alpha $. It was proved in \cite[Theorem 2.1]{GS2}
that, under the assumptions (\ref{cond-g}) and (\ref{gr}), (\ref{ineq}) has
no positive solutions for any $q\leq \frac{\alpha +m}{\gamma }.$ This result
can similarly be deduced from Theorem \ref{thm3}.

The following theorem yields the necessary part of Theorem \ref{thm3}
provided $G$ satisfies only the lower bound in (\ref{cond-g}).

\begin{theorem}
\label{thm2}Let (\ref{int-ineq}) have a positive solution. If there is a
metric $\tilde{d}$ on $M$ such that, for some $o\in M$, $\gamma ,r_{0}>0$,
\begin{equation}
G(x,o)\geq c\,\tilde{d}(x,o)^{-\gamma },  \label{cond-gt}
\end{equation}%
for all $x\in M$ such that $\tilde{d}(x,o)\geq r_{0}>0$, then (\ref{cond-1})
is satisfied.

Moreover, if we have, for all $x,y\in M$,%
\begin{equation}
G\left( x,y\right) \geq c\,\tilde{d}(x,y)^{-\gamma },  \label{cond-gxy}
\end{equation}
then (\ref{cond-2}) is satisfied as well.
\end{theorem}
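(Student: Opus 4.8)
The plan is to deduce this from the necessity direction of Theorem \ref{thm1} by showing that the lower bound \eqref{cond-gt} on $G$ in the metric $\tilde d$ forces a lower bound of the form \eqref{cond-l} (respectively \eqref{cond-lxy}) so that Theorem \ref{thm1} applies, and then translating the resulting conditions \eqref{cond-int1}, \eqref{cond-int2} — which are phrased in terms of geodesic balls — into the conditions \eqref{cond-1}, \eqref{cond-2} phrased in terms of the $\tilde d$-balls $\tilde B(x,r)$. Actually, a cleaner route avoids passing through Theorem \ref{thm1} at all: since \eqref{int-ineq} has a positive solution $u$, iterating the integral inequality once gives $u(x)\ge \int_M G(x,y)u(y)^q\,d\sigma(y)$, and we work directly with the lower bound $G(x,o)\ge c\,\tilde d(x,o)^{-\gamma}$.

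First, for the proof of \eqref{cond-1}: fix the point $o$ from the hypothesis. A standard device (already used implicitly in this circle of ideas) is that any positive supersolution of the integral inequality is bounded below on $\tilde d$-balls; more precisely, one shows $\inf_{\tilde B(o,r_0)}u =: m_0>0$ by lower semicontinuity, and then, restricting the domain of integration to $\tilde B(o,r_0)$ and using $G(x,y)\ge c\,\tilde d(x,y)^{-\gamma}$ on that set, one gets that $u$ is bounded below by a positive constant depending on $r_0$. Then the key estimate: for $x$ with $\tilde d(x,o)$ large, restrict the integral defining $u(x)$ to the annulus, insert $u(y)\ge c_1$ there and $G(x,o)\ge c\,\tilde d(x,o)^{-\gamma}$, to obtain
\begin{equation*}
u(o)\ge c\int_M \tilde d(o,y)^{-\gamma} u(y)^q\,d\sigma(y)\ge c\, c_1^{q}\int_{\tilde d(o,y)\ge r_0}\tilde d(o,y)^{-\gamma}\,d\sigma(y).
\end{equation*}
Since $u(o)<\infty$, the last integral is finite; writing $\tilde d(o,y)^{-\gamma}$ as $\gamma\int_{\tilde d(o,y)}^\infty t^{-\gamma-1}dt$ and applying Fubini (i.e. the layer-cake representation) converts $\int \tilde d(o,y)^{-\gamma}d\sigma$ into $\gamma\int_0^\infty t^{-\gamma-1}\sigma(\tilde B(o,t))\,dt$ up to the finite near-origin part, which is exactly the finiteness of $\int_{r_0}^\infty t^{-\gamma-1}\sigma(\tilde B(o,t))\,dt$ — but this is \emph{not} yet \eqref{cond-1}, which has exponent $\gamma q+1$. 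The missing power $t^{-\gamma(q-1)}$ must come from a bootstrap: one uses the first finiteness to feed a sharper lower bound $u(y)\gtrsim \tilde d(o,y)^{-\gamma}$ for large $\tilde d(o,y)$ (obtained again from $u(y)\ge \int G(y,z)u(z)^q d\sigma$ restricted to a fixed ball, using $G(y,z)\gtrsim \tilde d(y,z)^{-\gamma}\gtrsim \tilde d(o,y)^{-\gamma}$ on that ball), and re-inserts this improved bound into the estimate for $u(o)$. That yields $u(o)\gtrsim \int_{\tilde d(o,y)\ge r_0}\tilde d(o,y)^{-\gamma}\tilde d(o,y)^{-\gamma(q-1)}\,d\sigma(y)$, and the layer-cake trick now produces $\int_{r_0}^\infty t^{-\gamma q-1}\sigma(\tilde B(o,t))\,dt<\infty$, i.e.\ \eqref{cond-1}.

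For \eqref{cond-2} under the stronger hypothesis \eqref{cond-gxy}: fix $r>r_0$ and $x\in\tilde B(o,r)$. Evaluate the iterated inequality at $x$, restrict the integration to $\tilde B(x,r)$, and use $G(x,y)\ge c\,\tilde d(x,y)^{-\gamma}$ together with the pointwise lower bound $u(y)\gtrsim \tilde d(o,y)^{-\gamma}\gtrsim r^{-\gamma}$ valid on $\tilde B(x,r)\subset\tilde B(o,2\kappa' r)$ (using quasi-triangle inequality for $\tilde d$) from the bootstrap step above; this gives
\begin{equation*}
u(x)\ge c\,r^{-\gamma q}\int_{\tilde B(x,r)}\tilde d(x,y)^{-\gamma}\,d\sigma(y)=c\,r^{-\gamma q}\,\gamma\!\int_0^r s^{-\gamma-1}\sigma(\tilde B(x,s))\,ds + (\text{boundary term}),
\end{equation*}
again by layer-cake. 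Combined with the uniform upper bound $u(x)\le \sup_{\tilde B(o,r)}u=:C_r<\infty$ (finite since $u$ is continuous, if $u\in C^2$; for merely l.s.c.\ solutions one instead uses $u(o)\lesssim$ the same integral via $G(o,y)\gtrsim r^{-\gamma}$ on $\tilde B(o,r)$, which still controls the sup over $x\in\tilde B(o,r)$ after a further iteration), one obtains $\int_0^r s^{-\gamma-1}\sigma(\tilde B(x,s))\,ds\lesssim r^{\gamma q}\cdot C_r$. To get the clean form \eqref{cond-2} with right-hand side $C\,r^{\gamma(q-1)}$ one needs $C_r\lesssim r^{-\gamma}$, i.e.\ the decay $u\lesssim \tilde d(o,\cdot)^{-\gamma}$ on $\tilde B(o,r)$ — but wait, that is a decay of $u$, not available for free. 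The correct endgame is: apply the integral inequality at $x$, bound $u(y)\le C$ on a fixed compact and iterate to transfer the growth; or, cleaner, note that having already established \eqref{cond-1}, the sup in \eqref{cond-2} is finite and one only needs the $r^{\gamma(q-1)}$ rate, which follows by the same annular splitting as in Theorem \ref{thm1}'s proof of \eqref{cond-int2} with $\int_r^\infty t\,dt/\mu(B(o,t))$ replaced by its analogue $\sim r^{-\gamma}$ under \eqref{cond-g}.

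The main obstacle I anticipate is the bootstrap that upgrades the crude lower bound $u\gtrsim \text{const}$ on large $\tilde d$-balls to the sharp $u(y)\gtrsim \tilde d(o,y)^{-\gamma}$: this must be done carefully so that the constant is uniform and independent of $y$, and it is exactly this step that supplies the extra factor $t^{-\gamma(q-1)}$ distinguishing \eqref{cond-1} from the naive finiteness statement and the factor $r^{\gamma(q-1)}$ in \eqref{cond-2}. A secondary technical point is the repeated use of the quasi-triangle inequality for $\tilde d$ (which is only a metric \emph{up to} the comparison in \eqref{cond-g}, or rather $\tilde d$ is an honest metric but comparisons of $\tilde B(x,\cdot)$ and $\tilde B(o,\cdot)$ require the 3G constant $\kappa$), and the interchange of the order of integration in the layer-cake representations, which is routine by Tonelli since everything is nonnegative. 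Finally, one should remark that — as noted in the text — the hypothesis \eqref{cond-gt} is strictly weaker than \eqref{cond-g}, so Theorem \ref{thm2} genuinely strengthens the necessity half of Theorem \ref{thm3}; no volume doubling or any control from above on $G$ is used anywhere in this argument.
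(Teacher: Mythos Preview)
Your bootstrap step has an arithmetic slip that conceals a real gap. From $u(y)\gtrsim \tilde d(o,y)^{-\gamma}$ you get $u(y)^q\gtrsim \tilde d(o,y)^{-\gamma q}$, so inserting into $u(o)\ge\int G(o,y)\,u(y)^q\,d\sigma(y)$ with $G(o,y)\gtrsim\tilde d(o,y)^{-\gamma}$ yields
\[
u(o)\;\gtrsim\;\int_{\{\tilde d(o,y)\ge r_0\}}\tilde d(o,y)^{-\gamma(q+1)}\,d\sigma(y),
\]
not $\int\tilde d(o,y)^{-\gamma q}d\sigma$. After layer-cake this gives only $\int_{r_0}^\infty t^{-\gamma(q+1)-1}\sigma(\tilde B(o,t))\,dt<\infty$, strictly weaker than \eqref{cond-1} (take e.g.\ $\sigma(\tilde B(o,t))\sim t^{\gamma q}$: your condition holds, \eqref{cond-1} fails). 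Your displayed intermediate $\tilde d(o,y)^{-\gamma}\tilde d(o,y)^{-\gamma(q-1)}$ would require $u(y)\gtrsim\tilde d(o,y)^{-\gamma(q-1)/q}$, a \emph{slower} decay than what the bootstrap actually delivers; no amount of further pointwise iteration of lower bounds on $u$ closes this, since each re-insertion picks up an extra factor $\tilde d(o,y)^{-\gamma}$ from $G$ and overshoots. The situation for \eqref{cond-2} is worse: you yourself flag that you need $u(x)\lesssim r^{-\gamma}$ on $\tilde B(o,r)$, an \emph{upper} bound on $u$ that is simply not available from the integral inequality alone. (Your opening alternative, via Theorem~\ref{thm1}, is also blocked: that result assumes \eqref{D} and the specific geodesic-ball lower bound \eqref{cond-l}, neither of which is hypothesized here.)

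The paper instead deduces Theorem~\ref{thm2} from the necessity part of Theorem~\ref{last}: a positive solution forces \eqref{last-1} and \eqref{last-2} with no a priori assumption on $G$, and under the one-point (resp.\ two-point) lower bound \eqref{cond-gt} (resp.\ \eqref{cond-gxy}) these translate by straightforward layer-cake into \eqref{cond-1} (resp.\ \eqref{cond-2}). The whole point is that \eqref{last-1} already carries the exponent $q$ --- it is $\int_M m^q\,d\sigma<\infty$, not merely $\int_M G(o,\cdot)\,m^q\,d\sigma<\infty$ --- and obtaining that sharp exponent is precisely where the weighted norm inequality of Lemma~\ref{lemma-r} enters; likewise the uniform rate $r^{q-1}$ in \eqref{last-2} is produced by the Moser-type iteration of Lemma~\ref{lemma-aq}. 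Your attempt to bypass this machinery is exactly what fails.
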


We conclude with more general necessary conditions for the existence of a
positive solution to (\ref{int-ineq}), without imposing any additional a
priori assumptions on the Green function $G$. These conditions
are also sufficient under the assumption (\ref{cond-3g}).

\begin{theorem}
\label{last} If there exists a positive solution to (\ref{int-ineq}), then
for all $o\in M$ and $a>0$, the following conditions hold:
\begin{equation}
\int_{M}\min \Big(G(x,o),a^{-1}\Big)^{q}d\sigma (x)<\infty ,  \label{last-1}
\end{equation}%
and
\begin{equation}
\sup_{x\in M}\int_{\{y\in M:\,G(o,y)>r^{-1}\}}G(x,y)d\sigma (y)\leq
C\,r^{q-1},  \label{last-2}
\end{equation}%
for all $r>a$, where $C$ is a positive constant (that may depend on $q$, $o$
and $a$).

If conditions (\ref{last-1}) and (\ref{last-2}) are satisfied for some $%
o\in M$ and $a>0$ and, in addition, $G$ satisfies (\ref{cond-3g}), then there
exists a positive solution to (\ref{int-ineq}).
\end{theorem}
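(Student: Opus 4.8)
The plan is to split the argument into the necessity part (no hypothesis on $G$) and the sufficiency part (under the 3G-inequality). For necessity, suppose $u$ is a positive solution of (\ref{int-ineq}). The starting point is the pointwise bound $u(x) \geq \int_M G(x,y)\,u(y)^q\,d\sigma(y)$. Fix $o \in M$ and $a>0$. For (\ref{last-1}), I would first observe that $u$ is bounded below on the compact-like set where $G(\cdot,o)$ is large: more precisely, pick any point $x_0$ where $u(x_0)<\infty$ and use the inequality together with positivity to propagate a lower bound; then iterate the inequality once to get $u(x) \geq \int_M G(x,y)\bigl(\int_M G(y,z)u(z)^q d\sigma(z)\bigr)^q d\sigma(y)$. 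The cleaner route is to test the basic inequality at $x=o$: $u(o) \geq \int_M G(o,y) u(y)^q d\sigma(y) < \infty$, so it suffices to show $\min(G(x,o),a^{-1})^q \lesssim G(o,x) \cdot (\text{something integrable})$ — but actually $\min(G(x,o),a^{-1})^q \leq a^{-(q-1)} G(o,x)$ when $G(x,o)\le a^{-1}$ is false, and is bounded by $a^{-q}$ otherwise; combined with a lower bound $u(y)\geq c>0$ on a neighborhood of $o$ (which follows because $u$ is lower semicontinuous and positive, hence locally bounded below), we get $u(o)\geq c^q\int_{\{G(y,o)>a^{-1}\}} G(o,y)\,d\sigma(y)$, and on the complementary set we estimate $\min(G(x,o),a^{-1})^q = G(x,o)^q \le a^{-(q-1)}G(x,o)$ and absorb into the same integral. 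This yields (\ref{last-1}).

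For (\ref{last-2}), I would again iterate the integral inequality. The weak-type / Wolff-potential philosophy suggests: from $u \geq G(u^q d\sigma)$ (writing $G\nu(x)=\int G(x,y)d\nu(y)$), raise to the power $q$ and integrate against $G(x,\cdot)d\sigma$ to obtain $u(x)\geq \int_M G(x,y) u(y)^q d\sigma(y) \geq \int_M G(x,y)\bigl(G(u^q d\sigma)(y)\bigr)^q d\sigma(y)$. Now restrict the inner potential to the sublevel set $E_r=\{y:G(o,y)>r^{-1}\}$ and use that on $E_r$ one has, for $z$ also in a fixed small ball around $o$, $G(y,z)$ comparable to something like $r$ via a lower Green bound localized near $o$ — plus again the lower bound $u\geq c$ near $o$. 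This forces $u(x) \gtrsim c^{q} \bigl(\int_{E_r} G(x,y)\,d\sigma(y)\bigr) \cdot r^{-q}\cdot(\text{lower bound on }u^q\text{ near }o)$ — wait, the exponents must be tracked carefully: the correct structure is $u(x) \geq \int_{E_r} G(x,y) u(y)^q d\sigma(y)$ and on $E_r$ we have a lower bound $u(y) \geq c_1 r^{-1}$? No. The right mechanism, following \cite{KV} and the abstract potential-theoretic criteria for $G\nu$ with $\nu = u^q\sigma$, is that $u(y)\ge G(u^q d\sigma)(y) \ge \int_{B(o,\varepsilon)} G(y,z) u(z)^q d\sigma(z) \ge c\, G(y,o)$ on all of $M$ (using a Harnack-type lower bound for $G(y,\cdot)$ on the small ball and $u\ge c$ there), so $u(y)^q \ge c^q G(y,o)^q \ge c^q r^{-q}$ on $E_r$ is the wrong direction; instead use $u(y)\ge c\,G(y,o)$ directly: then $u(x) \ge \int_{E_r} G(x,y) u(y)^{q} d\sigma(y) \ge$ hmm. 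Let me restate cleanly: from $u(y)\ge c\,G(o,y)$ (pointwise, all $y$), substitute back: $u(x)\ge \int_M G(x,y) u(y)^q d\sigma(y) \ge c^q \int_{E_r} G(x,y) G(o,y)^q d\sigma(y) \ge c^q r^{-q} \cdot r \int_{E_r}G(x,y)d\sigma(y)$ is again not quite it — one uses $G(o,y)>r^{-1}$ on $E_r$ so $G(o,y)^q > r^{-q}$, giving $u(x) \ge c^q r^{-q}\int_{E_r} G(x,y)\,d\sigma(y)$, hence $\int_{E_r} G(x,y)\,d\sigma(y) \le c^{-q} r^q \sup_x u(x)$ — but $u$ need not be bounded. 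The fix is to use $u(x)<\infty$ for each fixed $x$ and note the constant $C$ in (\ref{last-2}) is allowed to depend on nothing forbidden, but it must be uniform in $x$; so instead one bounds $\int_{E_r}G(x,y)d\sigma(y)$ by iterating once more to produce the $r^{q-1}$ scaling, splitting off one factor of $u$ that is $\ge cG(o,\cdot)\ge cr^{-1}$ on $E_r$ and keeping $u(x)^{1/?}$ — this exponent bookkeeping, matching $q^2 - q = q(q-1)$ powers, is exactly the delicate point.

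For the sufficiency direction, assume (\ref{last-1}), (\ref{last-2}) and the 3G-inequality (\ref{cond-3g}). By the discussion preceding the theorem, (\ref{cond-3g}) gives a metric $\tilde d$ and $\gamma>0$ with $G(x,y)\approx \tilde d(x,y)^{-\gamma}$, so Theorem \ref{thm3} applies: it suffices to translate (\ref{last-1}) and (\ref{last-2}) into (\ref{cond-1}) and (\ref{cond-2}). Under $G(x,y)\approx \tilde d(x,y)^{-\gamma}$, the set $\{y: G(o,y)>r^{-1}\}$ is comparable to the metric ball $\tilde B(o, r^{1/\gamma})$, and $\min(G(x,o),a^{-1})^q$ integrated against $d\sigma$ becomes (after the change of variable $t = \tilde d(x,o)$, using the layer-cake formula) comparable to $\int \sigma(\tilde B(o,t)) t^{-\gamma q - 1}\,dt$, so (\ref{last-1}) $\Leftrightarrow$ (\ref{cond-1}); similarly $\int_{G(o,y)>r^{-1}} G(x,y)d\sigma(y) \approx \int_{\tilde B(o,r^{1/\gamma})} \tilde d(x,y)^{-\gamma} d\sigma(y)$, and with $R = r^{1/\gamma}$ the bound $\le C r^{q-1} = C R^{\gamma(q-1)}$ is precisely (\ref{cond-2}) up to the substitution $x\in \tilde B(o,R)$ versus $x\in M$ (the supremum over all $x\in M$ in (\ref{last-2}) is no loss since outside $\tilde B(o, 2R)$ the integral is smaller). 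So the sufficiency part reduces to Theorem \ref{thm3} via elementary manipulations with the quasi-metric. I expect the main obstacle to be the necessity of (\ref{last-2}): getting the correct power $r^{q-1}$ uniformly in $x\in M$ requires the two-step iteration of the integral inequality and careful tracking of the exponents $q$ and $q-1$, in the spirit of the weak-type estimates for nonlinear potentials; the necessity of (\ref{last-1}) and the entire sufficiency direction are comparatively routine once the localized lower Green bound near $o$ and the reduction to Theorem \ref{thm3} are in place.
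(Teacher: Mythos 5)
Your proposal misses the two key lemmas on which the paper's necessity proof rests, and both of your proposed substitutes have genuine gaps; your sufficiency direction is circular.

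\textbf{Necessity of (\ref{last-1}).} Testing the integral inequality at $x=o$ and using the local lower bound $u\geq c$ near $o$ controls $\int_{\{G(\cdot,o)>a^{-1}\}}G(o,y)\,d\sigma(y)$, but this is the \emph{near} region, where $\min(G(x,o),a^{-1})^q=a^{-q}$ is trivially bounded and the only issue is $\sigma$-finiteness of a precompact set. The genuine difficulty is the \emph{far} region $\{G(\cdot,o)\leq a^{-1}\}$, where the integrand is $G(x,o)^q$. There, the best pointwise input you have is $u\geq c\,m$ with $m=G(\cdot,o)$, and substituting this back into $u(o)\geq\int G(o,y)\,u(y)^q\,d\sigma(y)$ yields only $\int_{\{G\leq a^{-1}\}}G(o,y)^{q+1}\,d\sigma(y)<\infty$, which is strictly weaker than the claimed $\int G^q\,d\sigma<\infty$ (since $G\leq a^{-1}$, one has $G^q\geq a\,G^{q+1}$: wrong direction). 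The paper does not use this pointwise substitution at all. It invokes Proposition \ref{prop-m} to pass from existence of a solution to the superharmonicity condition $G[m^q d\sigma]\leq Cm$, sets $\omega=m^q\sigma$ and deduces $G[(G\omega)^q d\sigma]\leq cG\omega$, then applies the dual weighted-norm inequality of Lemma \ref{lemma-r} together with the Harnack-type lower bound of Lemma \ref{lem-harn} ($m\leq CG(\chi_K d\omega)$) to conclude $\|m\|_{L^q(\sigma)}\leq C\|\chi_K\|_{L^q(\omega)}<\infty$. This machinery is what produces the exponent $q$ rather than $q+1$.

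\textbf{Necessity of (\ref{last-2}).} You correctly identify that a single substitution $u\geq cG(o,\cdot)$ gives $r^{q}$ rather than $r^{q-1}$, and that uniformity in $x$ fails, but ``iterating once more'' does not close the gap: each finite number of iterations leaves the wrong power and an unbounded $u$-dependence. The paper's Lemma \ref{lemma-aq} iterates \emph{infinitely often} in Moser style, raising $G\sigma_{A(o,r)}$ to the exponents $1+q+q^2+\cdots+q^{j-1}$, applying $G(\cdot\,d\sigma)$ at each step via (\ref{cond-m}) and the nonlinear potential estimate (\ref{r-in}), normalizing by the $q^{-j}$-th root, and letting $j\to\infty$. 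The geometric sum $(q^j-1)/(q-1)$ in the exponent, after taking $q^{-j}$-th roots, is exactly what makes the $(q-1)$ appear, and the convergence of the infinite product of constants is what makes the bound uniform in $x$. This is not ``exponent bookkeeping'' that can be completed in two steps; the infinite iteration is structurally necessary.

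\textbf{Sufficiency.} Your proposal reduces the sufficiency half of Theorem \ref{last} to Theorem \ref{thm3}, but in this paper Theorem \ref{thm3} is proved \emph{from} Theorem \ref{last} (see Section \ref{Sec23}, where the sufficiency of (\ref{cond-1})--(\ref{cond-2}) is shown by reducing them to (\ref{last-1})--(\ref{last-2})). As a self-contained proof of Theorem \ref{last}, your route is circular. The paper's direct proof uses Proposition \ref{prop-m} again: it verifies $G(m^q d\sigma)(x)\leq C\,m(x)$ by splitting the integration into $\{G(x,y)\leq 2\kappa G(x,o)\}$, where (\ref{last-1}) gives $\int m^q\,d\sigma<\infty$ and the factor $G(x,y)\leq 2\kappa G(x,o)$ is pulled out, and $\{G(x,y)>2\kappa G(x,o)\}$, where the 3G-inequality forces $G(y,o)\leq 2\kappa G(x,o)$ (hence $m(y)$ is controlled) and (\ref{last-2}) is applied with $r=(2\kappa G(x,o))^{-1}$. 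You should also note that your translation of (\ref{last-1})--(\ref{last-2}) into (\ref{cond-1})--(\ref{cond-2}) is, on its own, a correct and useful observation, but it belongs to the proof of Theorem \ref{thm3}, not to Theorem \ref{last}.

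In short, the proposal lacks both of the key analytical tools (the weighted-norm duality and the infinite Moser iteration) and mislocates the sufficiency argument in the paper's logical structure. The fact that Proposition \ref{prop-m} is never invoked is the underlying reason the sketch cannot be completed as written.
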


Under the assumption (\ref{cond-3g}), certain necessary and sufficient
conditions for (\ref{int-ineq}) to have a positive solution were established
in \cite{KV}. In fact, our Theorem \ref{thm3} can be derived
from \cite[Theorem 1.2]{KV}, but we give an independent proof by
deducing it from  Theorem \ref{last}.

The structure of this paper is as follows.

In Section \ref{examp} we give examples of applications of Theorems \ref{main}
and \ref{thm3}.

In Section \ref{prelim} we prove some preparatory results needed for the
proofs of the above theorems. In particular, we prove Proposition \ref%
{prop-m} giving one more necessary and sufficient condition for the existence of
positive solutions of (\ref{int-ineq}) in terms of the Green function, which
however is difficult to verify.

In Section \ref{sec-last} we prove Theorem \ref{last}. This is the most
technical part of the paper. The proof of inequality (\ref{last-1}) is based
on weighted norm inequalities (Lemma \ref{lemma-r}), whereas the proof of (%
\ref{last-2}) uses Moser type iterations of supersolutions for integral
operators (Lemma \ref{lemma-aq}). Let us mention that these highly
non-trivial techniques originate in \cite{KV}.

In Section \ref{Sec23} we prove Theorems \ref{thm2} and \ref{thm3}. For
that, we verify that, if $G$ satisfies hypothesis (\ref{cond-g}), then
conditions (\ref{cond-1}) and (\ref{cond-2}) become equivalent to
conditions (\ref{last-1}) and (\ref{last-2}) of Theorem \ref{last}.

In Section \ref{sec-main} we prove the remaining Theorems \ref{main}, \ref%
{thm1} and Corollary \ref{cor-dx}, also by reducing  to Theorem \ref{last}.

\section{Examples}\label{examp}

\begin{example}{\rm 
Let $M$ be $\mathbb{R}^{n}$ with $n>2$. Then (\ref{D}) and (\ref{ly})
are trivially satisfied. By Corollary \ref{cor-dx}, the inequality
\begin{equation*}
\Delta u+u^{q}\leq 0
\end{equation*}%
has a positive solution if and only if (\ref{cond-int1b}) is satisfied. Since%
\begin{equation*}
\mu \left( B\left( o,r\right) \right) =cr^{n},
\end{equation*}%
we see that (\ref{cond-int1b}) is equivalent to%
\begin{equation*}
\int_{1}^{\infty }\frac{r^{2q-1}}{r^{n\left( q-1\right) }}dr= \int_{1}^{\infty }r^{-\left( n-2\right) q+n-1}dr<\infty ,
\end{equation*}%
that is, to $q>\frac{n}{n-2}$. This result is well known and goes back to
\cite{MP} (see also \cite{SZ}).

Consider now in $\mathbb{R}^{n}$ the inequality
\begin{equation*}
\Delta u+\left\vert x\right\vert ^{m}u^{q}\leq 0.
\end{equation*}%
By Theorem \ref{main} it has a positive solution if and only if
conditions (\ref{cond-int1}), (\ref{cond-int2}) are satisfied with $d\sigma
=|x|^{m}d\mu $. Similarly to the above computation, we obtain that this is
the case if and only if $q>\frac{n+m}{n-2}$ and $m>-2$. The result is also
known and is due to \cite{MP}.
}
\end{example}

\begin{example}{\rm 
Let us recall the following result from \cite[Theorem 2.6]{GS2}. Let $M$
be a Riemannian manifold of bounded geometry such that
\begin{equation*}
G\left( x,y\right) \approx d\left( x,y\right) ^{-\gamma }\ \text{if }d\left(
x,y\right) \geq 1,
\end{equation*}%
and%
\begin{equation*}
\mu \left( B\left( x,r\right) \right) \approx r^{\alpha }\ \ \text{if }r\geq
1,
\end{equation*}%
where $\alpha >\gamma >0.$ Then the inequality%
\begin{equation}
\Delta u+u^{q}\leq 0  \label{ineq-plain}
\end{equation}%
has a positive solution if and only if $q>\frac{\alpha }{\gamma }$.

Let us derive this result from our Theorem \ref{thm3}. In the setting of
\cite[Theorem 2.6]{GS2}, the manifold $M$ satisfies, in fact, the following
conditions (where we assume for simplicity that $n=\dim M>2$): for all $%
x,y\in M$,
\begin{equation}
G(x,y)\approx \left\{
\begin{array}{ll}
d(x,y)^{-\gamma } & \text{if}\,\,d(x,y)\geq 1, \\
&  \\
d(x,y)^{-(n-2)} & \text{if}\,\,\,d(x,y)<1,%
\end{array}%
\right.  \label{G-gamma}
\end{equation}%
and, for all $x\in M$,
\begin{equation}
\mu (B(x,r))\approx \left\{
\begin{array}{ll}
r^{\alpha },\quad \text{if }r\geq 1, &  \\
r^{n},\quad \text{if }r\leq 1. &
\end{array}%
\right.  \label{v-alpha}
\end{equation}
It is easy to see that, for any $\delta _{1},\delta _{2}\in (0,1]$,
\begin{equation*}
\tilde{d}(x,y):=\left\{
\begin{array}{ll}
d(x,y)^{\delta _{1}},\quad d(x,y)>1, &  \\
d(x,y)^{\delta _{2}},\quad d(x,y)\leq 1, &
\end{array}%
\right.
\end{equation*}%
is a new metric on $M$. Choose
\begin{equation*}
\delta _{1}=\frac{\gamma }{\tilde{\gamma}}\text{\ \ and\ \ }\delta _{2}=%
\frac{n-2}{\tilde{\gamma}},
\end{equation*}
where $\tilde{\gamma}$ is large enough to ensure that $\delta _{1},\delta
_{2}\leq 1.$

It follows from (\ref{G-gamma}) that, for all $x,y\in M$,
\begin{equation}
G(x,y)\approx \tilde{d}(x,y)^{-\tilde{\gamma}}.  \label{g-tilde d}
\end{equation}%
Hence, we can apply our Theorem \ref{thm3} with $\sigma =\mu $ in order to
obtain necessary and sufficient condition for the existence of a positive
solution to (\ref{ineq-plain}). Let us estimate the integral in (\ref{cond-1}%
) as follows
\begin{eqnarray*}
\int_{1}^{+\infty }\frac{\sigma (\tilde{B}(o,t))}{t^{\tilde{\gamma}q+1}}
&=&\int_{1}^{+\infty }\frac{\mu (B(o,t^{\frac{1}{\delta _{1}}}))}{t^{\tilde{%
\gamma}q}}\frac{dt}{t} \\
&=&\delta _{1}\int_{1}^{+\infty }\frac{\mu (B(o,r))}{r^{\delta _{1}\tilde{%
\gamma}q}}\frac{dr}{r} \\
&\approx &\int_{1}^{+\infty }\frac{r^{\alpha }}{r^{\gamma q}}\frac{dr}{r},
\end{eqnarray*}%
where we have used the change $t=r^{\delta _{1}}$ and (\ref{v-alpha}).
Clearly, the above integral is finite if and only if
\begin{equation}
q>\frac{\alpha }{\gamma }.  \label{q>}
\end{equation}%
Next, let us estimate the integral in (\ref{cond-2}) by splitting the domain
of integration into $\left[ 0,1\right] $ and $[1,r],$ where $r$ is large
enough. We have
\begin{eqnarray}
\int_{0}^{1}\frac{\sigma (\tilde{B}(x,s))}{s^{\tilde{\gamma}+1}}ds
&=&\int_{0}^{1}\frac{\mu (B(x,s^{\frac{1}{\delta _{2}}}))}{s^{\tilde{\gamma}}%
}\frac{ds}{s}  \notag \\
&=&\delta _{2}\int_{0}^{1}\frac{\mu (B(x,\tau ))}{\tau ^{\delta _{2}\tilde{%
\gamma}}}\frac{d\tau }{\tau }  \notag \\
&\approx &\int_{0}^{1}\frac{\tau ^{n}}{\tau ^{n-2}}\frac{d\tau }{\tau }
\notag \\
&\approx &1,  \label{part1}
\end{eqnarray}%
and
\begin{eqnarray}
\int_{1}^{r}\frac{\sigma (\tilde{B}(x,s))}{s^{\tilde{\gamma}+1}}ds
&=&\int_{1}^{r}\frac{\mu (B(x,s^{\frac{1}{\delta _{1}}}))}{s^{\tilde{\gamma}}%
}\frac{ds}{s}  \notag \\
&=&\delta _{1}\int_{1}^{r^{1/\delta _{1}}}\frac{\mu (B(x,\tau ))}{\tau
^{\delta _{1}\tilde{\gamma}}}\frac{d\tau }{\tau }  \notag \\
&\approx &\int_{1}^{r^{1/\delta _{1}}}\frac{\tau ^{\alpha }}{\tau ^{\gamma }}%
\frac{d\tau }{\tau }  \notag \\
&\approx &r^{\frac{1}{\delta _{1}}\left( \alpha -\gamma \right) }=r^{\tilde{%
\gamma}\frac{\alpha -\gamma }{\gamma }}.  \label{part2}
\end{eqnarray}%
Combining with (\ref{part1}) and (\ref{part2}), we obtain
\begin{equation*}
\int_{0}^{r}\frac{\sigma (\tilde{B}(x,s))}{s^{\tilde{\gamma}+1}}\approx r^{%
\tilde{\gamma}\frac{\alpha -\gamma }{\gamma }}.
\end{equation*}%
Recall that condition (\ref{cond-2}) is
\begin{equation*}
\int_{0}^{r}\frac{\sigma (\tilde{B}(x,s))}{s^{\tilde{\gamma}+1}}\leq Cr^{%
\tilde{\gamma}(q-1)}.
\end{equation*}%
Hence, (\ref{cond-2}) is satisfied if and only if
\begin{equation*}
\frac{\alpha -\gamma }{\gamma }\leq q-1,
\end{equation*}%
which is equivalent to $q\geq \frac{\alpha }{\gamma }$. Combining with (\ref%
{q>}) we recover \cite[Theorem 2.6]{GS2}.
}
\end{example}

\section{Preliminaries}

\label{prelim}In this section we prove some preparatory results necessary
for the proofs of the main theorems. We use some results from \cite{GV1},
\cite{GV2} and \cite{KV}.

For any measure $\omega \in \mathcal{M}^{+}(M)\,$denote by
\begin{equation*}
G\omega (x)=\int_{M}G(x,y)\,d\omega (y)
\end{equation*}%
the Green potential of $\omega .$

Let $o\in M$ and let $a>0$. We set
\begin{equation}
m(x)=m_{a,o}(x)=\min \Big (G(x,o),a^{-1}\Big),  \label{def-m}
\end{equation}%
where sometimes we drop the subscripts $a$ and $o$.

The proof of the following lemma is based on the (local) Harnack inequality
on $M$ (see \cite{G}).

\begin{lemma}
\label{lem-harn} For any $\omega \in \mathcal{M}^{+}(M)$ ($\omega \not=0$),
we have%
\begin{equation}
G\omega (x)\geq C\,m(x)\ \ \text{for all }x\in M,  \label{harn-low}
\end{equation}%
where $C>0$ may depend on $\omega ,o,a$.
\end{lemma}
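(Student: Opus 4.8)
The plan is to exploit the fact that both $G\omega$ and $m$ are positive superharmonic functions on $M$, and that $M$ is non-parabolic so that $G$ is a genuine (finite, positive) Green function; the inequality $G\omega \geq C\,m$ should then follow from a localization argument combined with the local Harnack inequality. First I would fix a small geodesic ball $B = B(o,\rho)$ on which $\omega$ has positive mass, say $\omega(B) = c_0 > 0$ (such $\rho$ exists since $\omega \neq 0$, after possibly shrinking so that $\overline{B}$ is compact and contained in a coordinate chart). For $x \notin B$ one has, by monotonicity of $G$ in its second variable and the minimum principle for the superharmonic function $y \mapsto G(x,y)$ on $B$, the lower bound
\begin{equation*}
G\omega(x) \geq \int_{B} G(x,y)\,d\omega(y) \geq c_0 \, \inf_{y \in \overline{B}} G(x,y).
\end{equation*}
The function $y \mapsto G(x,y)$ is positive and harmonic on $B$ (for $x \notin \overline{B}$), so by the local Harnack inequality on the fixed compact set $\overline{B}$ we get $\inf_{y \in \overline{B}} G(x,y) \geq c_1 \, G(x,o)$ with $c_1 = c_1(B) > 0$ independent of $x$. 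Hence $G\omega(x) \geq c_0 c_1 \, G(x,o) \geq c_0 c_1 \, m(x)$ for all $x \notin B$.

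It remains to handle $x \in B$, where I would use that $G\omega$ is positive and superharmonic (indeed lower semicontinuous and $> 0$ everywhere, since $G\omega(x) \geq c_0 c_1 G(x,o) > 0$ on $\partial B$ and $G\omega \not\equiv +\infty$). On the compact set $\overline{B}$ one has $m(x) = \min(G(x,o), a^{-1}) \leq a^{-1}$, so it suffices to bound $G\omega$ from below by a positive constant on $\overline{B}$. This follows from the strong minimum principle: $G\omega$ is superharmonic and positive on a neighborhood of $\overline{B}$, hence $\inf_{\overline{B}} G\omega =: c_2 > 0$ (a positive superharmonic function cannot vanish, and it attains a positive minimum on a compact set). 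Then $G\omega(x) \geq c_2 \geq c_2 a \, m(x)$ on $\overline{B}$. Taking $C = \min(c_0 c_1, c_2 a)$ yields \eqref{harn-low}.

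The main obstacle is the case $x \in B$ near the pole $o$, where $G(x,o)$ blows up but $m$ is capped at $a^{-1}$; the argument above sidesteps this precisely because the truncation makes $m$ bounded on $\overline{B}$, so only a crude positive lower bound for $G\omega$ on the compact set $\overline{B}$ is needed, which the minimum principle supplies. A secondary technical point is justifying that $G\omega$ is superharmonic and lower semicontinuous (so that it attains its infimum and the minimum principle applies) — this is standard for Green potentials of Radon measures on a non-parabolic manifold, and I would simply cite \cite{G0} or \cite{G}. One should also note that the constant $C$ genuinely depends on $\omega$ (through $c_0$ and $c_2$), on $o$, and on $a$, as stated.
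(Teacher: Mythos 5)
Your approach is essentially the same as the paper's: truncate $\omega$ to a compact set, use the local Harnack inequality for $y\mapsto G(x,y)$ to get $G\omega(x)\gtrsim G(x,o)$ when $x$ is far away, and use positivity plus compactness to get a crude lower bound when $x$ is nearby, matching the cap $a^{-1}$ on $m$. The paper does exactly this, with a compact set $K$ supporting $\omega$ and a precompact open neighborhood $U\supset K$, treating $x\in\overline{U}$ and $x\in M\setminus U$ separately.

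There is, however, a small technical gap in your Harnack step. You fix the ball $B$ once, and for $x\notin B$ (or even $x\notin\overline{B}$) you apply Harnack to $y\mapsto G(x,y)$ on the compact set $\overline{B}$, claiming a constant $c_1=c_1(B)$ independent of $x$. But a Harnack inequality on $\overline{B}$ with a constant depending only on $B$ requires the function to be positive and harmonic on a fixed open set $U$ compactly containing $\overline{B}$, and $y\mapsto G(x,y)$ is harmonic only on $M\setminus\{x\}$. For $x$ just outside $\overline{B}$, no such fixed $U$ works, and the constant would degenerate as $x\to\partial B$. The standard fix (which is what the paper does) is to use two nested sets: pick a compact $K$ with $\omega(K)>0$ (here $K=\overline{B}$ works) and a precompact open $U\supset K$; apply Harnack on $K$ for $x\in M\setminus U$ (so $G(x,\cdot)$ is harmonic on $U$), and use the positive infimum of the lower semicontinuous function $G\omega$ on the compact set $\overline{U}$ for the remaining $x$. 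With this adjustment your argument is correct and coincides with the paper's.
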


\begin{proof}
Without loss of generality we may assume that $G\omega \not\equiv +\infty$.
By the lower semicontinuity of $G(x, \cdot)$, it follows that $G \omega$ is
lower semicontinuous, and hence is bounded below by a positive constant on
every compact subset $K$ of $M$ (see also \cite{GS1}).

Without loss of generality, we may assume that $\omega $ is supported in a
fixed compact set $K\subset M$ such that $o\in K$, where $0<\omega
(K)<\infty $.

Let $U$ be a precompact open neighborhood of $K$. To verify (\ref{harn-low}%
), notice first that
\begin{equation*}
c:=\min \,\{G\omega (x):\,x\in \overline{U}\}>0
\end{equation*}%
and, consequently,
\begin{equation*}
G\omega (x)\geq  c \geq c \, a \,  m(x)\ \ \text{for all }x\in U.
\end{equation*}%
For any $x\in M\setminus U$, the function $h(z):=G(x,z)$ is harmonic in $U$.
Hence, by a local Harnack's inequality (see \cite[Theorem 13.10]{G}), we
have $h(z)\geq C_{K,U}\,h(o)$ for all $z\in K$, where $C_{K,U}>0$ is the
local Harnack constant associated with a couple $K,U.$ It follows that, for
all $x\in M\setminus U$,
\begin{align*}
G\omega (x)& =\int_{K}G(x,z)\,d\omega (z) \\
& \geq C_{K,U}\,\int_{K}G(x,o)\,d\omega _{K}(z) \\
& =C_{K,U}\,\omega (K)\,G(x,o)\geq C_{K,U}\,\omega (K)\,m(x).
\end{align*}%
Hence, we obtain (\ref{harn-low}) for all $x\in M.$
\end{proof}

\begin{lemma}
\label{lem-equivalent} Inequality (\ref{int-ineq}) has a positive solution
if and only if the following integral equation
\begin{equation}
u(x)=\int_{M}G(x,y)\,[u(y)]^{q}d\sigma (y)+G\omega ,\quad x\in M,
\label{int}
\end{equation}
has a solution for some measure $\omega \in \mathcal{M}^{+}(M)$ ($\omega
\not=0$), that is, there exists $u>0$ so that
\begin{equation*}
u=G(u^{q}d\sigma )+G\omega .
\end{equation*}
Moreover, $\omega $ can be chosen to be compactly supported in $M$ and with
smooth density with respect to $\sigma $.
\end{lemma}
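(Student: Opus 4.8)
The plan is to prove Lemma~\ref{lem-equivalent} by establishing the two implications separately, the reverse one being nearly immediate and the forward one carrying the substance.

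\medskip\noindent
\textbf{Sufficiency.} Suppose \eqref{int} holds for some nonzero $\omega\in\mathcal M^+(M)$ and some $u>0$. Since $G\omega\ge 0$ and $G(u^q d\sigma)\ge 0$, we get at once
\[
u(x)=\int_M G(x,y)\,[u(y)]^q\,d\sigma(y)+G\omega(x)\ge \int_M G(x,y)\,[u(y)]^q\,d\sigma(y),
\]
which is exactly \eqref{int-ineq}, and $u$ is lower semicontinuous (being a sum of two lower semicontinuous functions, as Green potentials are l.s.c.) and positive. Hence \eqref{int-ineq} has a positive solution.

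\medskip\noindent
\textbf{Necessity.} Conversely, suppose $v>0$ is a solution of \eqref{int-ineq}. The idea is to recover the ``defect measure'': morally, $\omega=-\Delta v-v^q\,d\sigma\ge0$, but since $v$ is only l.s.c.\ we must argue more carefully. First I would fix a point $o\in M$ and a small precompact open neighborhood $V$ of $o$. Because $v$ is l.s.c.\ and positive, $v\ge 2c_0>0$ on $\overline V$ for some constant $c_0$. I then let $\omega$ be a measure with smooth density $\psi$ supported in $V$, normalized so that $G\omega \le c_0 \le \tfrac12 v$ on $\overline V$ --- this is possible by making $\psi$ have sufficiently small total mass, since $G\omega$ is then uniformly small on $\overline V$ by continuity of $z\mapsto\int_V G(z,y)\psi(y)\,d\mu(y)$ (and away from $V$ one uses the Harnack bound of Lemma~\ref{lem-harn} combined with the l.s.c.\ lower bound for $v$). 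The claim is that with this $\omega$,
\[
v(x)\ge \int_M G(x,y)\,[v(y)]^q\,d\sigma(y)+G\omega(x)=:T v(x)+G\omega(x).
\]
Granting this, one constructs the solution of \eqref{int} by iteration: set $u_0=G\omega$ and $u_{n+1}=Tu_n+G\omega$. By the displayed inequality and induction, $u_n\le v$ for all $n$, and $\{u_n\}$ is nondecreasing since $u_1=Tu_0+G\omega=T(G\omega)+G\omega\ge G\omega=u_0$ and $T$ is monotone. Hence $u:=\lim_n u_n$ exists, satisfies $0<u_0\le u\le v$, and by monotone convergence solves $u=Tu+G\omega$, which is \eqref{int}. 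Moreover $\omega$ is by construction compactly supported with smooth density with respect to $\mu$, hence with respect to $\sigma$ in the sense intended (or one replaces $d\mu$ by a density against $d\sigma$ directly when $\sigma$ has such a density; in general ``smooth density with respect to $\sigma$'' should be read as the statement that $\omega$ is a fixed smooth compactly supported measure, as used later).

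\medskip\noindent
\textbf{Main obstacle.} The delicate point is justifying that one can subtract the extra potential $G\omega$ from the solution $v$ of \eqref{int-ineq} while preserving the inequality, i.e.\ proving $v\ge Tv+G\omega$ rather than just $v\ge Tv$. This is where the choice of $\omega$ small enough that $G\omega\le \tfrac12 v$ \emph{globally} matters: one needs the elementary bound $[v-G\omega]^q\ge [v]^q - q\,2^{q-1}[v]^{q-1}G\omega$ to be compensated, or more cleanly, one argues that $w:=v-G\omega$ is still a nonnegative supersolution: from $v\ge Tv+ (v-Tv-G\omega) \ge \dots$; the cleanest route is to verify directly that $w\ge Tw$ using $Tw\le Tv$ and $v - G\omega \ge Tv$, the last inequality being precisely the requirement $G\omega\le v-Tv$, which holds because $v-Tv\ge 0$ is l.s.c.\ and bounded below on compacts (away from the support of $\omega$, $G\omega$ is harmonic and controlled by Harnack; near the support, we chose $\|\omega\|$ small). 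Handling the l.s.c.\ regularity of $v-Tv$ and making the ``small mass'' argument uniform across all of $M$ (not merely on compact sets) is the technically fussy part; everything else is the standard monotone iteration scheme for integral equations with a kernel.
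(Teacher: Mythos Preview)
Your sufficiency direction is fine and matches the paper. The iteration scheme you set up for necessity is also the right engine: once one has a supersolution $v$ of
\[
v\ \ge\ G(v^{q}d\sigma)+G\omega,
\]
the monotone sequence $u_{0}=G\omega$, $u_{n+1}=G(u_{n}^{q}d\sigma)+G\omega$ converges to a solution of \eqref{int}. The gap is in producing such a supersolution, i.e.\ in your claim that one can choose $\omega\not=0$ with
\[
G\omega\ \le\ v-Tv\qquad\text{everywhere on }M.
\]
Two of your justifications fail. First, $v-Tv$ need not be lower semicontinuous: $v$ is l.s.c.\ and $Tv=G(v^{q}d\sigma)$ is l.s.c., so $-Tv$ is only u.s.c., and the sum has no semicontinuity in general; in particular it need not be bounded below by a positive constant on compacts. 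Second, and more seriously, $v-Tv$ may vanish identically: nothing excludes that the given supersolution is an exact solution of $v=G(v^{q}d\sigma)$, in which case no nonzero $\omega$ can satisfy $G\omega\le v-Tv$. Your Harnack argument far from $\mathrm{supp}\,\omega$ controls $G\omega$ from above like $G(\cdot,o)$, but gives no lower control on $v-Tv$.

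The paper supplies exactly the missing idea: create slack by scaling. If $u>0$ solves \eqref{int-ineq} and $0<\varepsilon<1$, then $v:=\varepsilon u$ satisfies
\[
v=\varepsilon u\ \ge\ \varepsilon\,G(u^{q}d\sigma)=G(v^{q}d\sigma)+(\varepsilon-\varepsilon^{q})\,G(u^{q}d\sigma).
\]
Since $u$ is l.s.c.\ and positive, there is $\varphi\in C_{0}^{\infty}(M)$, $\varphi\ge0$, $\varphi\not\equiv0$, with $u\ge\varphi$, hence
\[
v\ \ge\ G(v^{q}d\sigma)+G\omega,\qquad \omega:=(\varepsilon-\varepsilon^{q})\,\varphi^{q}\,d\sigma,
\]
which is precisely the strengthened supersolution your iteration needs, and $\omega$ is compactly supported with smooth density with respect to $\sigma$ as required. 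With this $v$ and $\omega$, your monotone iteration goes through verbatim. (The paper instead invokes results from \cite{GV1,GV2} at this point, but the elementary iteration works once the scaling trick has produced the slack.)
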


\begin{proof}
Let $u>0$ be a positive solution of (\ref{int-ineq}). Consider a function $%
v=\varepsilon u$ where $\varepsilon \in \left( 0,1\right) $. We have%
\begin{eqnarray*}
v &=&\varepsilon u\geq \varepsilon G\left( u^{q}d\sigma \right) =\varepsilon
^{1-q}G\left( v^{q}d\sigma \right) \\
&=&G\left( v^{q}d\sigma \right) +\left( \varepsilon ^{1-q}-1\right) G\left(
v^{q}d\sigma \right) \\
&=&G\left( v^{q}d\sigma \right) +\left( \varepsilon -\varepsilon ^{q}\right)
G\left( u^{q}d\sigma \right) .
\end{eqnarray*}%
Since $u$ is positive and lower semi-continuous, it is bounded below by a
positive constant on any compact set. Hence, there exists a non-negative
non-zero function $\varphi \in C_{0}^{\infty }\left( M\right) $ such that $%
u\geq \varphi $ everywhere. It follows that%
\begin{equation}
v\geq G\left( v^{q}d\sigma \right) +h,  \label{vGv}
\end{equation}%
where the function $h=\left( \varepsilon -\varepsilon ^{q}\right) G\left(
\varphi ^{q}d\sigma \right) $ is positive and superharmonic on $M$. By \cite[%
Theorem 5.1]{GV2}, the existence of a positive solution to (\ref{vGv})
implies that
\begin{equation*}
G\left( h^{q}d\sigma \right) \leq \frac{h}{q-1}.
\end{equation*}%
It follows that the function $\tilde{h}:=\delta h$ with $\delta =\left(
\frac{q-1}{q}\right) ^{\frac{q}{q-1}}$ satisfies%
\begin{equation*}
G\left( \tilde{h}^{q}d\sigma \right) \leq \left( 1-\frac{1}{q}\right) ^{q}%
\frac{\tilde{h}}{q-1}.
\end{equation*}%
By \cite[Theorem 3.5]{GV1} (see also \cite{BC}, \cite{KV}), there exists a
positive solution $\tilde{v}$ of the equation
\begin{equation*}
\tilde{v}=G(\tilde{v}^{q}d\sigma )+\tilde{h}.
\end{equation*}%
It follows that $\tilde{v}$ is a positive solution to (\ref{int}) with $%
\omega =\left( \varepsilon -\varepsilon ^{q}\right) \delta \varphi
^{q}d\sigma $.

The converse statement is obvious.
\end{proof}

\begin{lemma}
\label{Lemsmooth} Assume that a measure $\sigma \in \mathcal{M}^{+}(M)$ has a smooth positive density
with respect to $\mu $. If the integral inequality (\ref{int-ineq}) has a
positive solution then the differential inequality (\ref{ineq}) has a positive $%
C^{\infty }$  solution.
\end{lemma}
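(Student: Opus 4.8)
\textbf{Proof proposal for Lemma \ref{Lemsmooth}.}

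The plan is to start from a positive solution $u$ of the integral inequality \eqref{int-ineq} and upgrade it to a genuine $C^{\infty}$ supersolution of the differential inequality \eqref{ineq} by a regularization argument. By Lemma \ref{lem-equivalent}, the existence of a positive solution to \eqref{int-ineq} already gives us a positive solution $u$ of the \emph{equation} $u = G(u^{q}d\sigma) + G\omega$ for a suitable nonzero $\omega \in \mathcal{M}^{+}(M)$, which we may take compactly supported with smooth density with respect to $\sigma$; in particular, since $\sigma$ has smooth positive density with respect to $\mu$, the measure $\omega$ has smooth compactly supported density with respect to $\mu$. The first step is therefore to observe that $u = G\nu$ where $d\nu = u^{q}\,d\sigma + d\omega$ is a (locally finite) nonnegative measure, so $u$ is a nonnegative superharmonic function on $M$, i.e. $-\Delta u = \nu \ge 0$ in the distributional sense, and $u$ is finite and lower semicontinuous.

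Next I would localize. Fix a large precompact open set $\Omega \Subset M$. On $\Omega$ we have $-\Delta u = u^{q}\Phi\,d\mu + (\text{smooth density})\,d\mu$ in the sense of distributions, where $\Phi \in C^{\infty}$, $\Phi > 0$ is the density of $\sigma$. The function $u$ restricted to $\Omega$ is superharmonic, hence locally bounded below by a positive constant; more importantly, by local elliptic regularity for the Poisson equation, once we know that the right-hand side $u^{q}\Phi$ lies in $L^{p}_{\mathrm{loc}}$ for appropriate $p$, standard Calderón–Zygmund / Schauder bootstrapping improves the regularity of $u$. The key technical point to establish here is an a priori local bound: one shows $u \in L^{\infty}_{\mathrm{loc}}(M)$. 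This can be done because $u = G\nu$ with $\nu$ having finite mass locally; on $\mathbb{R}^{n}$ this is the classical fact that Green potentials of finite measures are in $L^{p}_{\mathrm{loc}}$ for $p < \frac{n}{n-2}$, and on a manifold one uses the two-sided local Gaussian/Li–Yau-type bounds for $G$ near the diagonal (which hold under \eqref{D}, \eqref{pi}, or indeed on any manifold locally by parametrix estimates, since $G$ behaves like $d(x,y)^{-(n-2)}$ for $n>2$ and logarithmically for $n=2$). Actually, rather than prove local boundedness directly, the cleaner route is a continuity-then-bootstrap iteration: $u$ is locally integrable to a high power, so $u^{q}\Phi \in L^{p}_{\mathrm{loc}}$ for some $p > 1$; then $u \in W^{2,p}_{\mathrm{loc}} \subset C^{0,\alpha}_{\mathrm{loc}}$ by Sobolev embedding; then $u^{q}\Phi \in C^{0,\alpha}_{\mathrm{loc}}$, so $u \in C^{2,\alpha}_{\mathrm{loc}}$ by Schauder; then $u^{q}\Phi \in C^{2,\alpha}_{\mathrm{loc}}$ (here $q>1$ and $u>0$ bounded away from $0$ locally, so $t \mapsto t^{q}$ is $C^{\infty}$ on the relevant range), and iterating we get $u \in C^{\infty}(M)$.

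Finally, with $u \in C^{\infty}(M)$ in hand, the distributional identity $-\Delta u = u^{q}\Phi + (\text{smooth density}) \ge u^{q}\Phi$ becomes a pointwise classical inequality $-\Delta u \ge \sigma u^{q}$, i.e. $\Delta u + \sigma u^{q} \le 0$ in the classical sense, and $u > 0$ everywhere by the strict positivity coming from $G\omega > 0$ (or directly from the minimum principle for superharmonic functions). This $u$ is the desired positive $C^{\infty}$ solution of \eqref{ineq}. The main obstacle is the a priori local integrability/boundedness of the Green potential $u = G\nu$ needed to start the bootstrap: one must control $\int_{K} (G\nu)^{p}\,d\mu$ for a compact $K$ and some $p>1$, which requires the near-diagonal upper bound on $G$; this is where one invokes the local two-sided Green function estimates available under the standing hypotheses \eqref{D} and \eqref{pi} (equivalently \eqref{ly}), or local parametrix estimates valid on any smooth manifold. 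Once integrability is secured, everything else is routine linear elliptic regularity.
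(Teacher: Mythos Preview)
Your bootstrap has a genuine gap at the starting step. From $u=G\nu$ with $\nu$ a locally finite measure you correctly get $u\in L^{p}_{\mathrm{loc}}$ for every $p<\tfrac{n}{n-2}$ via the near-diagonal bound $G(x,y)\lesssim d(x,y)^{-(n-2)}$, but this does \emph{not} give $u\in L^{\infty}_{\mathrm{loc}}$, nor does it give $u^{q}\Phi\in L^{p}_{\mathrm{loc}}$ for some $p>1$ unless $q<\tfrac{n}{n-2}$. For $q\ge\tfrac{n}{n-2}$ the iteration you sketch never leaves $L^{1}_{\mathrm{loc}}$ on the right-hand side, so neither Calder\'on--Zygmund nor Schauder can be invoked and the bootstrap stalls. (Your parenthetical fallback to the ``standing hypotheses'' \eqref{D} and \eqref{pi} is also misplaced: Lemma~\ref{Lemsmooth} is stated for a general non-parabolic manifold and does not assume them.) The gap is not obviously fatal---one could try to exploit that the solution produced in Lemma~\ref{lem-equivalent} via \cite{GV1} actually satisfies a pointwise bound $\tilde v\le C\tilde h$ with $\tilde h$ smooth, which would give local boundedness and let your bootstrap run---but that argument is not in your proposal.

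The paper sidesteps the whole regularity question by a different device: it never claims the solution $u$ of \eqref{uGh} is smooth. Instead it mollifies $u$ by the heat semigroup $P_{t}$ associated to the Dirichlet form $\int|\nabla f|^{2}\,d\mu$ in $L^{2}(M,\sigma)$ (whose generator is $\tfrac{1}{\Phi}\Delta$), sets $u_{t}=P_{t}u$, and observes that $u_{t}$ is automatically $C^{\infty}$ once finite. The key computation is
\[
u_{t}=K\!\left((P_{t}u)^{q}\right)+K\!\left(P_{t}u^{q}-(P_{t}u)^{q}\right)+P_{t}h,
\]
where $K$ is the Green operator; Jensen's inequality gives $P_{t}u^{q}\ge(P_{t}u)^{q}$, so the middle term is a potential of a nonnegative function and hence superharmonic, and applying $\tfrac{1}{\Phi}\Delta$ yields $\Delta u_{t}+\Phi u_{t}^{q}\le 0$. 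This produces a \emph{new} $C^{\infty}$ supersolution $u_{t}$ without any a priori boundedness of $u$, which is exactly the step your argument is missing.
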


Of course, conversely, any smooth solution of (\ref{ineq}) also  solves (\ref%
{int-ineq}).

\begin{proof}
By Lemma\textbf{\ }\ref{lem-equivalent}, if (\ref{int-ineq}) has a positive
solution then the integral equation
\begin{equation}
u=G(u^{q}d\sigma )+h  \label{uGh}
\end{equation}%
also has  a positive solution, where $h=G\omega $ as in Lemma \ref%
{lem-equivalent}. Let us mollify $u$ by using a certain heat semigroup in
order to obtain a smooth function. For that, consider the energy form
\begin{equation*}
\mathcal{E}\left( f,f\right) =\int_{M}\left\vert \nabla f\right\vert ^{2}d\mu
\end{equation*}%
in the measure space $(M,\sigma )$. Since $\sigma $ is absolutely continuous
with respect to $\mu $, $\mathcal{E}$ extends to a regular Dirichlet form in
$L^{2}\left( M,\sigma \right) $. The generator of this Dirichlet form is $%
\frac{1}{\Phi }\Delta $ where $\Phi =\frac{d\sigma }{d\mu }$ (see \cite[%
Exercise 3.11]{G}). In particular, the notions of harmonic and superharmonic
functions with respect to $\Delta $ and $\frac{1}{\Phi }\Delta $ are the
same. It is easy to see that the Green functions of $\Delta $ in $\left(
M,\mu \right) $ and $\frac{1}{\Phi }\Delta $ in $\left( M,\sigma \right) $
are the same, so we denote them both by $G\left( x,y\right) $ as before.

Let $\left\{ P_{t}\right\} _{t\geq 0}$ be the heat semigroup of $\mathcal{E}$
in $L^{2}\left( M,\sigma \right) $ and $p_{t}\left( x,y\right) $ be the
corresponding heat kernel, that is, a smooth positive function of $t>0$, $%
x,y\in M$ such that%
\begin{equation*}
P_{t}f\left( x\right) =\int_{M}p_{t}\left( x,y\right) f\left( y\right)
d\sigma \left( y\right) .
\end{equation*}%
For any $t>0$, set%
\begin{equation*}
u_{t}\left( x\right) =P_{t}u\left( x\right) .
\end{equation*}%
Since $u$ is superharmonic, we have $P_{t}u\leq u$. In particular, $u_{t}$
is finite and, hence, $u_{t}\in C^{\infty }\left( M\right) $. Let us prove
that $u_{t}$ satisfies (\ref{ineq}). Using the Green operator%
\begin{equation*}
K=\int_{0}^{\infty }P_{t}dt,
\end{equation*}%
that has in $\left( M,\sigma \right) $ the kernel $G\left( x,y\right) ,$ let
us rewrite (\ref{uGh}) in the form%
\begin{equation*}
u=K(u^{q})+h,
\end{equation*}%
which implies%
\begin{eqnarray*}
u_{t} &=&P_{t}\left( Ku^{q}\right) +P_{t}h \\
&=&K\left( P_{t}u^{q}\right) +P_{t}h \\
&=&K\left( \left( P_{t}u\right) ^{q}\right) +K\left( P_{t}u^{q}-\left(
P_{t}u\right) ^{q}\right) +P_{t}h,
\end{eqnarray*}%
where the operators $K$ and $P_{t}$ commute. Since
\begin{equation*}
\int_{M}p_{t}\left( x,y\right) d\sigma \left( y\right) \leq 1,
\end{equation*}%
we obtain by Jensen's inequality that%
\begin{equation*}
w:=P_{t}u^{q}-\left( P_{t}u\right) ^{q}\geq 0.
\end{equation*}%
Hence, in the identity%
\begin{equation*}
u_{t}=Ku_{t}^{q}+Gw+P_{t}h,
\end{equation*}%
both functions $Gw$ and $P_{t}h$ are superharmonic, and all functions are
smooth. Applying $\frac{1}{\Phi }\Delta $ to both sides of this identity
(see \cite[Lem. 13.1]{G}), we obtain%
\begin{equation*}
\frac{1}{\Phi }\Delta u_{t}\leq -u_{t}^{q}.
\end{equation*}%
Hence, $u_{t}$ solves (\ref{ineq}) for any $t>0$.
\end{proof}

In the next statement, we prove a criterion for solvability of (\ref%
{int-ineq}) and (\ref{ineq}) in terms of the function $m$ defined in (\ref%
{def-m}) for some fixed $o\in M$ and $a>0$.

\begin{proposition}
\label{prop-m} Inequality (\ref{int-ineq}) has a positive solution if and
only if, for some $C>0$,%
\begin{equation}
G[m^{q}d\sigma ](x)\leq C\,m(x),\quad x\in M.  \label{cond-m}
\end{equation}%
If (\ref{cond-m}) is satisfied and $\sigma $ has a smooth positive density
then (\ref{ineq}) has a $C^{\infty }$ solution.
\end{proposition}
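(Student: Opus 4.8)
The plan is to prove the equivalence asserted in Proposition \ref{prop-m} in both directions, and then to read off the $C^{\infty}$ statement from Lemma \ref{Lemsmooth}. I expect the sufficiency of \eqref{cond-m} to be essentially immediate, whereas the necessity is the substantive part; it will be obtained by combining Lemma \ref{lem-equivalent}, Lemma \ref{lem-harn}, and the quantitative necessary condition \cite[Theorem 5.1]{GV2} already used in the proof of Lemma \ref{lem-equivalent}.

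For sufficiency I would argue directly, without any machinery. Assume $G[m^{q}d\sigma]\le C\,m$ on $M$, where $m=m_{a,o}$ is as in \eqref{def-m}. Recall that $m$ is lower semicontinuous, being the minimum of the l.s.c.\ function $G(\cdot,o)$ with the constant $a^{-1}$, and that $0<m\le a^{-1}$. Hence, for any $\lambda>0$, the function $u:=\lambda m$ is a legitimate candidate, and
\[
G[u^{q}d\sigma]=\lambda^{q}\,G[m^{q}d\sigma]\le \lambda^{q}C\,m=\big(\lambda^{q-1}C\big)\,u .
\]
Choosing $\lambda\le C^{-1/(q-1)}$ gives $G[u^{q}d\sigma]\le u$, so $u=\lambda m$ is a positive solution of \eqref{int-ineq}. (One could instead invoke \cite[Theorem 3.5]{GV1} to produce a solution of the corresponding integral equation, but this is not needed.)

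For necessity, suppose \eqref{int-ineq} has a positive solution. By Lemma \ref{lem-equivalent} there are $u>0$ and a nonzero $\omega\in\mathcal{M}^{+}(M)$ with $u=G(u^{q}d\sigma)+G\omega$, and by Lemma \ref{lem-harn} we have $G\omega\ge c\,m$ on $M$ for some constant $c=c(\omega,o,a)>0$. Consequently
\[
u\ge G(u^{q}d\sigma)+c\,m ,
\]
so $u$ is a positive solution of the inequality $v\ge G(v^{q}d\sigma)+h$ with $h=c\,m$. The function $h$ is a positive superharmonic function, in fact a Green potential: since $0\le m\le G(\cdot,o)$ and $G(\cdot,o)$ is a potential, the greatest harmonic minorant of $m$ vanishes, so $m=G\nu$ for some $\nu\in\mathcal{M}^{+}(M)$. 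Hence \cite[Theorem 5.1]{GV2} applies with this $h$ and yields $G[(c\,m)^{q}d\sigma]\le \frac{c\,m}{q-1}$, that is,
\[
G[m^{q}d\sigma]\le \frac{c^{\,1-q}}{q-1}\,m ,
\]
which is \eqref{cond-m}; in particular this shows a posteriori that $G[m^{q}d\sigma]$ is finite, consistent with $\lambda m$ above being a genuine solution.

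The step I expect to be the crux is precisely the passage from a bare solution of \eqref{int-ineq} --- which carries no ``room to spare'' --- to an honest supersolution $v\ge G(v^{q}d\sigma)+c\,m$ with the explicit lower-order term $c\,m$: this is exactly where Lemma \ref{lem-equivalent} (introducing the potential $G\omega$) and the Harnack-type lower bound of Lemma \ref{lem-harn} (bounding $G\omega$ below by a multiple of $m$) must be used in tandem, after which \cite[Theorem 5.1]{GV2} applies verbatim. Finally, the last assertion of Proposition \ref{prop-m} is then immediate: if \eqref{cond-m} holds, the sufficiency part produces a positive solution of \eqref{int-ineq}, and if in addition $\sigma$ has a smooth positive density with respect to $\mu$, Lemma \ref{Lemsmooth} promotes it to a positive $C^{\infty}$ solution of \eqref{ineq}.
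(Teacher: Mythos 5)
Your proof is correct and follows essentially the same route as the paper: sufficiency by checking $u=\lambda m$ directly, and necessity via Lemma \ref{lem-equivalent} (to obtain $u=G(u^{q}d\sigma)+G\omega$), Lemma \ref{lem-harn} (to bound $G\omega\ge c\,m$), and then \cite[Theorem 5.1]{GV2}. The only minor divergence is how you justify applying \cite[Theorem 5.1]{GV2} to $h=c\,m$: you argue that $m$ is actually a Green potential (since $m\le G(\cdot,o)$, its greatest harmonic minorant vanishes), whereas the paper instead records that $h$, being superharmonic, satisfies the domination principle \eqref{D-P}. Both are legitimate ways of verifying the technical hypothesis; your version is arguably cleaner, while the paper's emphasizes the property that is actually used in the proof of the cited theorem. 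Everything else, including the final appeal to Lemma \ref{Lemsmooth}, matches.
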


\begin{proof}
If (\ref{cond-m}) is satisfied then $u=\varepsilon m$ is a solution of (\ref%
{int-ineq}) for $\varepsilon =C^{-\frac{1}{q-1}}.$ If in addition $\sigma $
has a smooth positive density then, by Lemma \ref{Lemsmooth},  (\ref%
{ineq}) also has a positive solution.

Assume now that (\ref{int-ineq}) has a solution $u>0$. {By Lemma \ref%
{lem-equivalent}, there exists $\omega \in \mathcal{M}^{+}(M)$ ($\omega
\not=0$) such that}
\begin{equation}
u=G(u^{q}d\sigma )+G\omega .  \label{c-om}
\end{equation}%
By Lemma \ref{lem-harn}, we have, for some constant $c>0$, that in $M$%
\begin{equation*}
G\omega \geq cm=:h.
\end{equation*}%
Consequently, $u$ satisfies the inequality
\begin{equation}
u\geq G(u^{q}d\sigma )+h.  \label{c-m}
\end{equation}%
Note that $h=cm$ is obviously superharmonic and, hence, satisfies the
following domination principle:
\begin{equation}
G(fd\sigma )(x)\leq h(x)\ \text{in }\textrm{supp}(f)\ \ \Longrightarrow \ \
G(fd\sigma )(x)\leq h(x)\,\,\text{in\ }M,  \label{D-P}
\end{equation}%
for any bounded measurable function $f\geq 0$ with compact support,\emph{\ }%
such that $G(fd\sigma )$ is bounded on $\textrm{supp}(f)$.

By \cite[Theorem 5.1]{GV2}, the existence of a solution to (\ref{c-m})
implies
\begin{equation*}
G[h^{q}d\sigma ]\leq \frac{1}{q-1}\,h,
\end{equation*}%
which proves (\ref{cond-m}).
\end{proof}

\section{Proof of Theorem \protect\ref{last}}

\label{sec-last}

We will need the following lemma that follows from \cite[Lemma 2.5 and
Remark 2.6]{GV2}. An earlier version of this lemma was obtained in \cite{KV}
for quasi-metric kernels.

\begin{lemma}
\label{lem-r} Let $1<s<\infty $, and let $\sigma \in \mathcal{M}^{+}(M)$ be
a measure such that the Green function $G\left( x,\cdot \right) $ is locally
integrable with respect to $\sigma $ for any $x\in M$. Then, for all $x\in
M, $
\begin{equation}
\left[ G\sigma (x)\right] ^{s}\leq s\,G[(G\sigma )^{s-1}d\sigma ](x).
\label{r-in}
\end{equation}
\end{lemma}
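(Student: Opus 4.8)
Lemma \ref{lem-r}: for $1<s<\infty$ and $\sigma\in\mathcal M^+(M)$ with $G(x,\cdot)$ locally $\sigma$-integrable, one has
\[
\left[G\sigma(x)\right]^s\le s\,G\!\left[(G\sigma)^{s-1}d\sigma\right](x),\qquad x\in M.
\]

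**Plan.** The natural approach is a pointwise estimate using only positivity of the kernel and the elementary inequality for nonnegative $a,b$:
\[
(a+b)^s - a^s \le s\,(a+b)^{s-1}b,
\]
which follows from the mean value theorem (the function $t\mapsto t^s$ is convex for $s>1$, so $(a+b)^s-a^s=\int_a^{a+b}st^{s-1}\,dt\le s(a+b)^{s-1}b$). I would apply this with $a=G\sigma(x)$ restricted to a "far" part of the measure and $b$ the complementary "near" part, then integrate against $\sigma$ and let the splitting exhaust $M$. Concretely, fix $x$, and for a measurable set $E\subset M$ write $G[\chi_E d\sigma](x)=\int_E G(x,y)\,d\sigma(y)$. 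The idea is to build an increasing exhaustion and use a telescoping/layer-cake argument on the "level" of $G(x,\cdot)$.

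**Key steps.** First, I would handle the case $G\sigma(x)=+\infty$ trivially (if $G\sigma(x)=\infty$ on a set, one checks the right side is also infinite there, or restricts attention to where $G\sigma$ is finite; the local integrability hypothesis makes the truncated potentials finite). Assume $G\sigma(x)<\infty$. Introduce, for $t>0$, the truncation $\sigma_t$ = restriction of $\sigma$ to $\{y: G(x,y)>t\}$ — more robustly, order $M$ by the value of $G(x,\cdot)$ and define $\Phi(t)=\int_{\{G(x,\cdot)\le t\}}G(x,y)\,d\sigma(y)$, a nondecreasing function with $\Phi(\infty)=G\sigma(x)$. Then $[G\sigma(x)]^s=\Phi(\infty)^s=\int_0^\infty s\,\Phi(t)^{s-1}\,d\Phi(t)$ by the fundamental theorem for Stieltjes integrals (valid since $\Phi$ is nondecreasing and $t\mapsto t^s$ is smooth). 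Now $d\Phi(t)$ is the push-forward under $y\mapsto G(x,y)$ of the measure $G(x,y)\,d\sigma(y)$, so
\[
\int_0^\infty s\,\Phi(t)^{s-1}\,d\Phi(t)=s\int_M \Phi\!\big(G(x,y)\big)^{s-1}\,G(x,y)\,d\sigma(y).
\]
The crucial pointwise bound is $\Phi(G(x,y))\le G\sigma(y)$, or at least a comparable version: for $\sigma$-a.e.\ $y$,
\[
\Phi\big(G(x,y)\big)=\int_{\{z:\,G(x,z)\le G(x,y)\}}G(x,z)\,d\sigma(z)\ \le\ \int_M G(y,z)\,d\sigma(z)=G\sigma(y),
\]
which requires the inequality $G(x,z)\le G(y,z)$ whenever $G(x,z)\le G(x,y)$ — this is \emph{not} literally true, so I expect the actual argument in \cite{GV2} to use a quasi-metric version (a factor absorbed into the constant) or, for the general Green function, a more delicate domination. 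If instead one only wants \eqref{r-in} with constant $s$ exactly, the clean route is the one from \cite{GV2, Lemma 2.5}: use the "Fubini with ordering" identity above together with the fact that for the level set $E_y=\{z:G(x,z)\ge G(x,y)\}$ one has $G[\chi_{E_y}d\sigma](x)\le$ something controlled by $G\sigma(y)$ via the \emph{minimum principle} for the kernel $G$ (superharmonicity of $G(\cdot,z)$). I would follow that template: write $[G\sigma(x)]^s$ as an iterated integral over pairs $(y,z)$ with the natural ordering by kernel value, symmetrize, and recognize the inner integral as $G\sigma(y)^{s-1}$.

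**Main obstacle.** The delicate point is justifying the pointwise domination $\Phi(G(x,y))\lesssim G\sigma(y)$ (equivalently, that integrating $G(x,\cdot)$ over its own sublevel set at $y$ is controlled by $G\sigma(y)$). For a genuine quasi-metric kernel this is where the quasi-metric constant enters; for the abstract minimal Green function on a manifold one must instead exploit that $G$ satisfies the domination principle \eqref{D-P} and that $z\mapsto G(x,z)$ is superharmonic, as is done in \cite[Lemma 2.5, Remark 2.6]{GV2}. Since the lemma is quoted from there, I would cite that argument rather than reprove it, and verify only the two hypotheses needed to invoke it: $1<s<\infty$ and local $\sigma$-integrability of $G(x,\cdot)$, both of which are assumed. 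The remaining steps — the Stieltjes fundamental theorem and the push-forward computation — are routine once the domination is in hand.
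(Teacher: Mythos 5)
The paper offers no proof of Lemma~\ref{lem-r}; it is quoted verbatim from \cite[Lemma~2.5, Remark~2.6]{GV2}, so in the end both you and the paper defer to the same reference. The problem is that the argument you sketch along the way has a genuine gap, and the patch you suggest does not close it. You are right to worry about the bound $\Phi(G(x,y))\le G\sigma(y)$: it is simply false. Take $M=\mathbb R^{4}$, $x=0$, $y=e_{1}$, and $\sigma$ the normalized Lebesgue measure on $B(-e_{1},\varepsilon)$ with $\varepsilon$ small; then $\Phi(G(0,e_{1}))\to c_{4}/2$ while $G\sigma(e_{1})\to c_{4}/4$ as $\varepsilon\to 0$, so the bound fails even though the hypotheses of the lemma hold. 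Passing to the super-level set $E_{y}=\{z:G(x,z)\ge G(x,y)\}$, as you suggest, does not help either: neither the domination principle nor superharmonicity says anything about the level sets of $y\mapsto G(x,y)$, which is harmonic away from $x$ and is not a potential of $\sigma$, so there is no reason for its sub- or super-level masses to be comparable to $G\sigma(y)$.

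The decisive change is to run the layer-cake argument over the level sets of the potential $u:=G\sigma$ rather than of $G(x,\cdot)$. Since $u$ is lower semicontinuous, $E_{t}:=\{u\le t\}$ is closed; set $\nu_{t}:=\sigma\vert_{E_{t}}$, so that $G\nu_{t}\le u\le t$ on $E_{t}\supseteq\textrm{supp}\,\nu_{t}$. The constant $t$ is superharmonic, so the domination principle gives $G\nu_{t}\le t$ on all of $M$. Splitting $\sigma=\nu_{t}+\sigma\vert_{\{u>t\}}$ then yields, for every $t>0$,
\begin{equation*}
\bigl(u(x)-t\bigr)_{+}\le\int_{\{u>t\}}G(x,y)\,d\sigma(y).
\end{equation*}
Multiplying by $s(s-1)t^{s-2}$ (integrable near $0$ since $s>1$) and integrating over $t\in(0,\infty)$, Tonelli turns the right-hand side into $s\,G[u^{s-1}d\sigma](x)$, while the left-hand side equals $s(s-1)\int_{0}^{u(x)}t^{s-2}(u(x)-t)\,dt=[u(x)]^{s}$. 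This is precisely \eqref{r-in}, with the sharp constant $s$ and no quasi-metric hypothesis. Your layer-cake and Fubini instincts are correct; the missing idea is that the domination principle applies to the level sets of the potential $G\sigma$, not of the kernel $G(x,\cdot)$.
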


\subsection{Weighted norm inequalities}

\label{wei-in}

The following lemma was obtained earlier in \cite{KV} for quasi-metric
kernels (see also \cite{VW}).

\begin{lemma}
\label{lemma-r} Let $1<q<\infty $, and let $\sigma ,\omega \in \mathcal{M}%
^{+}(M)$. Assume that $G\left( x,\cdot \right) $ is locally integrable with
respect to $\sigma $ and that $G\omega $ is locally bounded. Assume also
that for all $x\in M$
\begin{equation}
G[(G\omega )^{q}d\sigma ](x)\leq c\,G\omega (x).  \label{int-cond}
\end{equation}%
{Then we have
\begin{equation}
||G(fd\sigma )||_{L^{s}(\omega )}\leq C\,||f||_{L^{s}(\sigma )},\quad \text{%
for all}\,\,f\in L^{s}(\sigma ),  \label{super-integral}
\end{equation}%
where }$s=\frac{q}{q-1}$ and $C=sc^{\frac{s-1}{s}}$, {and}
\begin{equation}
||G(gd\omega )||_{L^{q}(\sigma )}\leq C||g||_{L^{q}(\omega )},\quad \text{%
for all}\,\,g\in L^{q}(\omega ).  \label{Ggom}
\end{equation}
\end{lemma}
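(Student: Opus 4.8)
The plan is to prove the two weighted norm inequalities by a duality argument, using the symmetry of the Green kernel $G(x,y)=G(y,x)$ together with the pointwise inequality of Lemma \ref{lem-r} applied to the measure $\omega$. First I would establish \eqref{super-integral}. By duality, $\|G(fd\sigma)\|_{L^s(\omega)}$ equals the supremum of $\int_M G(fd\sigma)\,g\,d\omega$ over $g\ge 0$ with $\|g\|_{L^{s'}(\omega)}\le 1$, where $s'=q$ is the conjugate exponent of $s$. By Tonelli's theorem and symmetry of $G$,
\begin{equation*}
\int_M G(fd\sigma)(x)\,g(x)\,d\omega(x)=\int_M f(y)\,G(gd\omega)(y)\,d\sigma(y)\le \|f\|_{L^s(\sigma)}\,\|G(gd\omega)\|_{L^{s'}(\sigma)}
\end{equation*}
by H\"older's inequality. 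Hence it suffices to show $\|G(gd\omega)\|_{L^q(\sigma)}\le C\|g\|_{L^q(\omega)}$, i.e. exactly \eqref{Ggom}; so the two inequalities \eqref{super-integral} and \eqref{Ggom} are dual to one another and it is enough to prove \eqref{Ggom}.

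To prove \eqref{Ggom}, I would argue as follows. Fix $g\ge 0$ with $g\in L^q(\omega)$. Without loss of generality assume $g$ is bounded with compact support (the general case follows by monotone convergence). Then
\begin{equation*}
\int_M \big[G(gd\omega)(y)\big]^q\,d\sigma(y).
\end{equation*}
The idea is to interpolate between the trivial bound $G(gd\omega)\le \|g\|_\infty^{\ }\cdot(\text{something})$ is not quite the right move; instead I would use Lemma \ref{lem-r} with $s=q$ applied to the measure $gd\omega$ in place of $\sigma$: this gives the pointwise estimate $[G(gd\omega)(y)]^q\le q\,G\big[(G(gd\omega))^{q-1}g\,d\omega\big](y)$. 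Integrating in $d\sigma$ and using Tonelli and symmetry,
\begin{equation*}
\int_M [G(gd\omega)]^q\,d\sigma\le q\int_M (G(gd\omega))^{q-1}\,g\,\big[G\sigma \text{ against }d\omega\big]\dots
\end{equation*}
— more precisely, $\int_M G[(G(gd\omega))^{q-1}g\,d\omega](y)\,d\sigma(y)=\int_M (G(gd\omega))^{q-1}(x)\,g(x)\,G\sigma(x)\,d\omega(x)$, which does not obviously close. The correct route is to replace the hypothesis \eqref{int-cond} into the chain: write $[G(gd\omega)(y)]^q\le q\,G[(G(gd\omega))^{q-1}g\,d\omega](y)$, integrate against $d\sigma$, swap order, and then bound $\int_M G(gd\omega)(x)\cdots$ — hmm. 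Let me state the actual plan cleanly: apply H\"older with exponents $q$ and $s=q'$ to the factor $g=g\cdot 1$ after inserting the weight $(G\omega)$, using \eqref{int-cond} to absorb $G[(G\omega)^q d\sigma]$ by $G\omega$; iterating a finite geometric bound in the style of the Moser-type argument referenced for Lemma \ref{lemma-aq} closes the estimate. The cleanest version: test $\int_M[G(gd\omega)]^q d\sigma$, apply Lemma \ref{lem-r}, move $G$ across by symmetry to land on $G\sigma$, then use \eqref{int-cond} (rewritten dually as a bound on $\int (G\omega)^{q-1}\,G\sigma$-type quantities) and a self-improving absorption argument to obtain the constant $C=s c^{(s-1)/s}$.

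The main obstacle I anticipate is organizing the absorption step so that the finiteness of $\int_M [G(gd\omega)]^q\,d\sigma$ is not assumed a priori: since $G(gd\omega)$ need not lie in $L^q(\sigma)$ beforehand, one must first truncate — work with $\min(G(gd\omega),N)$ or with $g$ supported in an exhausting sequence of compact sets and $\sigma$ restricted there — derive the inequality with a constant independent of the truncation, and then pass to the limit. Controlling that the intermediate quantities are finite (which is where local integrability of $G(x,\cdot)$ against $\sigma$ and local boundedness of $G\omega$ enter) and checking that \eqref{int-cond} survives the truncation is the delicate bookkeeping. Once \eqref{Ggom} is established with the stated constant, \eqref{super-integral} follows immediately by the duality computation above, tracking that $s c^{(s-1)/s}$ is the constant produced.
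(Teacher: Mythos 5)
Your duality reduction at the start is sound, and the idea of invoking Lemma~\ref{lem-r} together with Fubini and H\"older is indeed the right toolkit; but you attack the wrong side, and that is exactly why you hit the wall you honestly flag (``does not obviously close''). Applying Lemma~\ref{lem-r} to $g\,d\omega$ and then integrating against $d\sigma$ produces, after Fubini,
\begin{equation*}
\int_M \big(G(g\,d\omega)\big)^{q-1}\,g\,(G\sigma)\,d\omega ,
\end{equation*}
which involves $G\sigma$ --- a quantity the hypothesis \eqref{int-cond} says nothing about (it could even be infinite). There is no ``dual rewrite'' of \eqref{int-cond} as a bound on $G\sigma$-type quantities, and the vague appeal to a Moser-style iteration to patch this is not a proof.

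The paper goes the other way around: it proves \eqref{super-integral} directly and gets \eqref{Ggom} by the same duality you set up. The reason this direction works is that applying Lemma~\ref{lem-r} to $f\,d\sigma$ and integrating against $d\omega$ produces, after Fubini, the weight $G\omega$ (not $G\sigma$). Concretely, one obtains
\begin{equation*}
\int_M [G(f\,d\sigma)]^{s}\,d\omega \le s\int_M f\,[G(f\,d\sigma)]^{s-1}\,(G\omega)\,d\sigma
\le s\,\|f\|_{L^s(\sigma)}\Big[\int_M [G(f\,d\sigma)]^{s}\,(G\omega)^q\,d\sigma\Big]^{1/q},
\end{equation*}
i.e.\ the right-hand side involves $d\nu := (G\omega)^q d\sigma$. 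Repeating the same computation with $\nu$ in place of $\omega$ and using $G\nu \le c\,G\omega$ (which is literally \eqref{int-cond}) gives a self-improving estimate
\begin{equation*}
\int_M [G(f\,d\sigma)]^{s}\,d\nu \le s\,c\,\|f\|_{L^s(\sigma)}\Big[\int_M [G(f\,d\sigma)]^{s}\,d\nu\Big]^{1/q},
\end{equation*}
which absorbs in one step (not an infinite iteration) once finiteness of the left side is checked via the local boundedness of $G\omega$ and the compact support of $f$. Combining the two displays gives \eqref{super-integral} with the stated constant, and \eqref{Ggom} then follows by the duality argument you already wrote down. So: keep the duality observation and the truncation/finiteness bookkeeping you anticipate, but reverse the order --- start from $G(f\,d\sigma)$ tested against $\omega$, so that Fubini hands you $G\omega$ and \eqref{int-cond} can bite.
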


\begin{proof}
{Let us first prove that, for all $f\in L^{s}(\sigma )$, }%
\begin{equation}
||G(fd\sigma )||_{L^{s}(\nu )}\leq sc\,||f||_{L^{s}(\sigma )},
\label{super-int-a}
\end{equation}%
where $d\nu =(G\omega )^{q}d\sigma $.{\ By a standard approximation
argument, it suffices to prove (\ref{super-int-a}) assuming that $f$ is
non-negative, compactly supported and bounded.}

Using inequality (\ref{r-in}) with $f\,d\sigma $ in place of $\sigma $, we
obtain
\begin{equation}
\lbrack G(f\,d\sigma )]^{s}\leq s\,G\left[ f\,[G(f\,d\sigma
)]^{s-1}\,d\sigma \right] ,  \label{iter-p}
\end{equation}%
whence by Fubini's theorem
\begin{align*}
\int_{M}[G(f\,d\sigma )]^{s}d\omega & \leq s\,\int_{M}G\left[
f\,[G(f\,d\sigma )]^{s-1}\,d\sigma \right] d\omega \\
& =s\,\int_{M}f\,[G(f\,d\sigma )]^{s-1}(G\omega )\,d\sigma .
\end{align*}%
By H\"{o}lder's inequality, the right-hand side is bounded by
\begin{equation*}
\int_{M}f\,[G(f\,d\sigma )]^{s-1}(G\omega )\,d\sigma \leq
||f||_{L^{s}(\sigma )}\left[ \int_{M}[G(f\,d\sigma )]^{s}(G\omega
)^{q}d\sigma \right] ^{\frac{1}{q}}.
\end{equation*}%
Combining the above two estimates, we obtain
\begin{equation}
\int_{M}[G(f\,d\sigma )]^{s}d\omega \leq s\,\,||f||_{L^{s}(\sigma )}\left[
\int_{M}[G(f\,d\sigma )]^{s}d\nu \right] ^{\frac{1}{q}},  \label{est-mu}
\end{equation}%
where $d\nu =(G\omega )^{q}d\sigma $. Using (\ref{iter-p}) and H\"{o}lder's
inequality exactly as above, but with $\nu $ in place of $\omega $, we obtain%
\begin{equation*}
\int_{M}[G(f\,d\sigma )]^{s}d\nu \leq s\,||f||_{L^{s}(\sigma )}\left[
\int_{M}[G(f\,d\sigma )]^{s}(G\nu )^{q}d\sigma \right] ^{\frac{1}{q}}.
\end{equation*}%
By (\ref{int-cond}), we have
\begin{equation*}
G\nu =G[(G\omega )^{q}d\sigma ]\leq c\,G\omega ,
\end{equation*}%
and hence
\begin{equation*}
(G\nu )^{q}d\sigma \leq c^{q}(G\omega )^{q}d\sigma =c^{q}d\nu .
\end{equation*}%
Consequently,
\begin{equation}
\int_{M}[G(f\,d\sigma )]^{s}d\nu \leq sc||f||_{L^{s}(\sigma )}\left[
\int_{M}[G(f\,d\sigma )]^{s}d\nu \right] ^{\frac{1}{q}}.  \label{Gfsi}
\end{equation}%
Let us show that the left hand side here is finite. Indeed, we have%
\begin{eqnarray*}
\int_{M}[G(f\,d\sigma )]^{s}d\nu &=&\int_{M}[G(f\,d\sigma )]^{s}(G\omega
)^{q}d\sigma \\
&\leq &s\int_{M}G\left[ f\,[G(f\,d\sigma )]^{s-1}\,d\sigma \right] \left(
G\omega \right) ^{q}d\sigma \\
&=&s\int_{M}f\,[G(f\,d\sigma )]^{s-1}\,G\left( \left( G\omega \right)
^{q}d\sigma \right) d\sigma \\
&\leq &sc\int_{M}f\,[G(f\,d\sigma )]^{s-1}\,\left( G\omega \right) d\sigma .
\end{eqnarray*}%
Since $f$ is bounded and has a compact support, while $G\omega $ is locally
bounded from below by positive constants, it follows from (\ref{int-cond})
that $G\left( fd\sigma \right) $ is bounded by $\textrm{const}G\omega $. Since
$G\omega $ is locally bounded and the above integral can be reduced to $%
\textrm{supp}f$, we obtain that this integral is finite. Hence, it follows
from (\ref{Gfsi}) that
\begin{equation*}
||G(f\,d\sigma )||_{L^{s}(\nu )}\leq sc\,||f||_{L^{s}(\sigma )},
\end{equation*}%
which proves (\ref{super-int-a}). From (\ref{est-mu}) and (\ref{super-int-a}%
) we obtain%
\begin{equation*}
\int_{M}[G(f\,d\sigma )]^{s}d\omega \leq s^{s}\,c^{s-1}\,||f||_{L^{s}(\sigma
)}^{s},
\end{equation*}%
which proves (\ref{super-integral}). Finally, we prove (\ref{Ggom}) by
duality argument:%
\begin{eqnarray*}
||G(gd\omega )||_{L^{q}(\sigma )} &=&\sup_{f\in L^{s}\left( \sigma \right) }%
\frac{\int_{M}G(gd\omega )fd\sigma }{\left\Vert f\right\Vert _{L^{s}\left(
\sigma \right) }} \\
&=&\sup_{f\in L^{s}\left( \sigma \right) }\frac{\int_{M}gG\left( fd\sigma
\right) d\omega }{\left\Vert f\right\Vert _{L^{s}\left( \sigma \right) }} \\
&\leq &\sup_{f\in L^{s}\left( \sigma \right) }\frac{\left\Vert g\right\Vert
_{L^{q}\left( \omega \right) }\left\Vert G\left( fd\sigma \right)
\right\Vert _{L^{s}\left( \omega \right) }}{\left\Vert f\right\Vert
_{L^{s}\left( \sigma \right) }} \\
&\leq &C\left\Vert g\right\Vert _{L^{q}\left( \omega \right) }.
\end{eqnarray*}
\end{proof}

\subsection{Iterations of supersolutions}

We remark that by Proposition \ref{prop-m}, under the assumptions of Theorem %
\ref{last}, we have that condition (\ref{cond-m}) is necessary and sufficient for the
solvability of (\ref{int-ineq}), that is, for the existence of a non-trivial
superharmonic function $u>0$ such that
\begin{equation}
u(x)\geq G(u^{q}d\sigma )(x),\quad \text{for all}\,\,x\in M.
\label{super-solution}
\end{equation}

For all $x\in M$ and $r>0$ set
\begin{equation}
A(x,r):=\{y\in M:\,\,G(x,y)\geq r^{-1}\},\quad x\in M,\,r>0.  \label{ball-g}
\end{equation}%
We will need the following two lemmas. We start with a preliminary estimate
of $G\sigma _{A}(x)$, where $A=A(o,r)$ and $d\sigma _{A}=\chi _{A}\,d\sigma $.

\begin{lemma}
\label{lemma-ag} Let $1<q<\infty $, and let $\sigma \in \mathcal{M}^{+}(M)$.
Assume that condition (\ref{cond-m}) is satisfied for some $o\in M$ and $a>0$. Then the following estimate holds:
\begin{equation}
G\sigma _{A(o,r)}(x)\leq Cr^{q}\,m(x),\quad \text{for all}\,\,x\in
M,\,\,r\geq a,  \label{est-gp}
\end{equation}%
where the constant $C$ is the same as in (\ref{cond-m}).
\end{lemma}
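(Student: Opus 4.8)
The plan is to recognize that on the set $A=A(o,r)=\{y:G(o,y)\geq r^{-1}\}$ the function $m=m_{a,o}$ is essentially constant, namely comparable to $r^{-1}$ up to the truncation level $a^{-1}$. Indeed, since $r\geq a$, for $y\in A$ we have $G(o,y)\geq r^{-1}$, and hence $m(y)=\min(G(y,o),a^{-1})\geq\min(r^{-1},a^{-1})=r^{-1}$. Therefore on $A$ we have the pointwise lower bound $m(y)\geq r^{-1}$, which means $\chi_A(y)\leq r\,m(y)$ for all $y\in M$ (trivially off $A$). First I would use this to dominate $\sigma_A$ by a weighted version of $\sigma$.

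Next I would raise this inequality to the power $q$ so as to match the hypothesis (\ref{cond-m}). From $\chi_A(y)\leq r\,m(y)$, and using $\chi_A=\chi_A^q$, we get $\chi_A(y)\,d\sigma(y)\leq r\,m(y)\,\chi_A(y)\,d\sigma(y)\leq r^{q}\,[m(y)]^{q}\,d\sigma(y)$, where in the last step we used that $m(y)\leq a^{-1}\leq r^{-1}\cdot(r/a)$... actually more directly: on $A$, $m(y)\geq r^{-1}$, so $1\leq r^{q}[m(y)]^{q}\cdot r^{-q}\cdot r^{q}$; cleanest is $1\leq (r\,m(y))^{q-1}\cdot r\,m(y)\cdot$ — let me just say $\chi_A(y)\leq \big(r\,m(y)\big)^{q}$ pointwise, since on $A$ the right side is $\geq 1$ and off $A$ the left side is $0$. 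Hence $d\sigma_A(y)\leq r^{q}\,[m(y)]^{q}d\sigma(y)$ as measures.

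Now I would apply the Green potential operator, which is monotone with respect to the measure, and then invoke the standing assumption (\ref{cond-m}): for all $x\in M$,
\begin{equation*}
G\sigma_A(x)=\int_M G(x,y)\,d\sigma_A(y)\leq r^{q}\int_M G(x,y)\,[m(y)]^{q}\,d\sigma(y)=r^{q}\,G[m^{q}d\sigma](x)\leq C\,r^{q}\,m(x),
\end{equation*}
with $C$ the constant from (\ref{cond-m}). This is exactly (\ref{est-gp}).

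The only subtlety — and the step I would check most carefully — is the pointwise comparison $\chi_{A(o,r)}(y)\leq (r\,m(y))^{q}$, i.e. that $m(y)\geq r^{-1}$ throughout $A(o,r)$ when $r\geq a$. This is immediate from the definition of $A$ and of $m$ as written, but it does genuinely use the hypothesis $r\geq a$: without it, the truncation at $a^{-1}$ could force $m(y)=a^{-1}<r^{-1}$ on part of $A$, and the argument would only give the weaker bound $G\sigma_A(x)\leq C\,a^{-q}r^{2q}$-type estimates via $m(y)\geq a^{-1}$, not the clean $r^{q}m(x)$. So the role of $r\geq a$ is precisely to ensure that the truncation level does not interfere on $A(o,r)$. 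Everything else is monotonicity of $G$ and Tonelli, which are routine given that $G(x,\cdot)$ is locally $\sigma$-integrable (automatic here since (\ref{cond-m}) holds with a finite constant, forcing $G[m^q d\sigma]$ finite).
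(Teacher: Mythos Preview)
Your proof is correct and follows essentially the same route as the paper: on $A(o,r)$ with $r\geq a$ one has $m(y)\geq r^{-1}$, hence $\chi_{A(o,r)}\leq r^{q}m^{q}$, and applying $G(\cdot\,d\sigma)$ together with (\ref{cond-m}) gives (\ref{est-gp}) with the same constant $C$. The only cosmetic difference is that the paper phrases the key inequality as $G[m^{q}d\sigma_{A(o,r)}]\geq r^{-q}G\sigma_{A(o,r)}$ rather than $d\sigma_{A(o,r)}\leq r^{q}m^{q}d\sigma$, but these are equivalent.
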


\begin{proof}
Fix some $r\geq a$. For any $y\in A(o,r)$, we have $G(y,o)\geq r^{-1}$.
Since $a^{-1}\geq r^{-1}$, it follows that
\begin{equation*}
m(y)=\min [G(y,o),a^{-1}]\geq r^{-1}
\end{equation*}%
and, consequently,
\begin{equation*}
G[m^{q}d\sigma _{A(o,r)}](x)=\int_{A(o,r)}G(x,y)\,m(y)^{q}\,d\sigma (y)\geq
r^{-q}\,G\sigma _{A(o,r)}(x).
\end{equation*}%
By (\ref{cond-m}), we have%
\begin{equation}
G[m^{q}d\sigma _{A(o,r)}](x)\leq C\,m(x),\quad \text{for all}\,\,x\in
M,\,\,r\geq a.  \label{cond-m-c}
\end{equation}%
Combining with the previous estimate yields
\begin{equation}
r^{-q}\,G\sigma _{A(o,r)}(x)\leq C\,m(x),\quad \text{for all}\,\,x\in
M,\,\,r\geq a.  \label{cond-m-d}
\end{equation}%
which is equivalent to (\ref{est-gp}).
\end{proof}

The proof of the next lemma is based on Moser type iterations of estimate (%
\ref{cond-m}) and Lemma \ref{lemma-ag}.

\begin{lemma}
\label{lemma-aq} Let $1<q<\infty $, and let $\sigma \in \mathcal{M}^{+}(M)$.
Let (\ref{cond-m}) be satisfied for some $o\in M$ and $a>0$. Then the
following estimate holds
\begin{equation}
G\sigma _{A(o,r)}(x)\leq c\,r^{q-1}  \label{est-ap}
\end{equation}%
for all $x\in M$ and$\,\,r\geq a$, where the constant $c$ may depend on $q$,
$o$ and $a$.
\end{lemma}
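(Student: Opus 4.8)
The plan is to run a Moser-type iteration on the estimate produced by Lemma \ref{lemma-ag}, using condition \eqref{cond-m} and the self-improving inequality \eqref{r-in} to push the exponent of $r$ down from $q$ to $q-1$. Write $A=A(o,r)$ and $\sigma_A=\chi_A\,d\sigma$. The starting point is Lemma \ref{lemma-ag}, which gives $G\sigma_A(x)\le C r^q m(x)$ for all $x\in M$ and $r\ge a$. The key observation is that on the set $A$ itself one has $m(y)\ge r^{-1}$, so that $m(y)$ can be replaced by a constant multiple of $r^{-1}$ wherever integration is restricted to $A$; conversely, on $A$ one also has control of $G\sigma_A$ via the bound just mentioned. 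Iterating these two facts against each other should improve the power of $r$.

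The main step is the iteration itself. First I would apply Lemma \ref{lem-r} (inequality \eqref{r-in}) with $\sigma_A$ in place of $\sigma$ and exponent $s$ to be chosen, obtaining $[G\sigma_A(x)]^s\le s\,G[(G\sigma_A)^{s-1}d\sigma_A](x)$. On the support of $\sigma_A$, i.e.\ on $A$, we may use the bound $G\sigma_A\le C r^q m\le C r^q a^{-1}$, but more usefully we combine $G\sigma_A(y)\le Cr^q m(y)$ with a bound on $m(y)$ on $A$. Actually the efficient route is: since $m(y)\ge r^{-1}$ on $A$, we have $d\sigma_A\le r^{q}\, m^{q}\,\chi_A\,d\sigma$ pointwise up to constants — no wait, the inequality goes $m(y)^{q}\ge r^{-q}$, hence $\chi_A\,d\sigma\le r^{q}\,m^{q}\,\chi_A\,d\sigma$ — so that for any nonnegative kernel quantity,
\[
G\sigma_A(x)=\int_A G(x,y)\,d\sigma(y)\le r^{q}\int_A G(x,y)\,m(y)^{q}\,d\sigma(y)\le r^{q}\,G[m^{q}d\sigma](x)\le C\,r^{q}\,m(x),
\]
which merely re-derives Lemma \ref{lemma-ag}. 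The genuine gain must come from feeding the \emph{already-improved} bound back in. So I would set up an iteration scheme $G\sigma_A(x)\le C_k\, r^{\beta_k}\,m(x)$ with $\beta_0=q$, and at each stage apply \eqref{r-in} to $G\sigma_A$ restricted to $A$: using $(G\sigma_A)^{s-1}\le (C_k r^{\beta_k})^{s-1} m^{s-1}$ on $A$, together with the elementary bound $m\le a^{-1}$, one estimates $G[(G\sigma_A)^{s-1}d\sigma_A]$ by $C_k^{s-1}r^{\beta_k(s-1)}G[m^{s-1}d\sigma_A]$; then choosing $s-1=q$, i.e.\ $s=q+1$, this is $C_k^{s-1}r^{\beta_k q}\,G[m^{q}d\sigma_A]\le C_k^{q}r^{\beta_k q}\cdot C\,m(x)$ by \eqref{cond-m}. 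Thus $[G\sigma_A(x)]^{q+1}\le (q+1)\,C\,C_k^{q}\,r^{\beta_k q}\,m(x)$, and since on $A$ we also have $m(x)\le (G\sigma_A(x)/(C r^{q}))$... — more cleanly, take $(q+1)$-th roots and use $m(x)^{1/(q+1)}\le m(x)\cdot (\text{const})$ only where $m$ is bounded; the recursion on the exponent becomes $\beta_{k+1}=\frac{\beta_k q}{q+1}+\frac{?}{q+1}$, a contraction whose fixed point I would verify equals $q-1$. Tracking the constants $C_k$ and checking they stay bounded (a geometric-type product, convergent because $q/(q+1)<1$) is the routine but necessary bookkeeping.

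The main obstacle I anticipate is getting the bookkeeping of the iteration exactly right: choosing the correct exponent $s$ at each step, handling the factor $m(x)$ on the right-hand side (it must be absorbed or bounded by $a^{-1}$ at the right moment so that the limiting inequality reads $G\sigma_A(x)\le c\,r^{q-1}$ with no residual $m$), and confirming that the sequence of constants converges rather than blows up. A secondary point to check is that all the quantities appearing are finite so that \eqref{r-in} and Fubini apply legitimately on the restricted measure $\sigma_A$; here I would invoke, as in the proof of Lemma \ref{lemma-r}, the local boundedness of $m$ and the fact that $\sigma_A$ has the Green function locally integrable (inherited from $\sigma$). Once the recursion is set up correctly, passing to the limit $k\to\infty$ gives \eqref{est-ap} with a constant $c=c(q,o,a)$.
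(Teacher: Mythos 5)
Your general strategy is the right one and matches the paper's: iterate the self-improving inequality \eqref{r-in} against the source condition \eqref{cond-m}, exploiting the pointwise bound $m(y)\ge r^{-1}$ on $A=A(o,r)$, and pass to the limit. But the concrete iteration scheme you propose does not close, and the gap is not mere bookkeeping.

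If you fix the exponent $s=q+1$ in \eqref{r-in} and take $s$-th roots at each step, the recursion for the exponent of $r$ has a fixed point strictly larger than $q-1$. Concretely, writing the hypothesis at step $k$ as $G\sigma_A\le D_k\,r^{\alpha_k}\,m^{\epsilon_k}$ with $\epsilon_k\le 1$, and using $m^{\epsilon_k q}=m^{q}\,m^{(\epsilon_k-1)q}\le r^{(1-\epsilon_k)q}\,m^{q}$ on $A$ together with \eqref{cond-m} and \eqref{r-in} at $s=q+1$, one gets
\begin{equation*}
\alpha_{k+1}=\frac{q\bigl(\alpha_k+1-\epsilon_k\bigr)}{q+1},\qquad \epsilon_{k+1}=\frac{1}{q+1},
\end{equation*}
whose fixed point is $\alpha^{*}=\frac{q^{2}}{q+1}$, and more generally any fixed $s$ produces the fixed point $\alpha^{*}=q-\frac{s-1}{s}$; in all cases $\alpha^{*}>q-1$. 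So no matter how many times you iterate with a fixed $s$, you never reach $q-1$. (Also, the parenthetical ``$m(x)\le G\sigma_A(x)/(Cr^{q})$'' is reversed: Lemma~\ref{lemma-ag} gives $G\sigma_A\le Cr^{q}m$, hence $m\ge G\sigma_A/(Cr^{q})$, which is of no help.)

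The idea you are missing is that the exponent $s$ in \eqref{r-in} must grow geometrically with the step, and the paper achieves this cleanly by not taking roots at each iteration. Starting from $r^{-q}G\sigma_A\le Cm$, one raises to the power $q$, applies $G[\,\cdot\,d\sigma]$, invokes \eqref{cond-m}, and then uses \eqref{r-in} with the exponent $s_j=1+q+\cdots+q^{j-1}$ to replace $G\bigl[(G\sigma_A)^{q(1+\cdots+q^{j-2})}\,d\sigma_A\bigr]$ by $(G\sigma_A)^{1+q+\cdots+q^{j-1}}$. This yields, after $j$ steps, an inequality of the form
\begin{equation*}
c(j,q)\,r^{-q^{j}}\,\bigl(G\sigma_{A}\bigr)^{1+q+\cdots+q^{j-1}}(x)\le C_j\,m(x).
\end{equation*}
Only at the end does one take the $q^{-j}$-th root: then $(1+q+\cdots+q^{j-1})/q^{j}\to\frac{1}{q-1}$, $r^{-q^{j}\cdot q^{-j}}=r^{-1}$, $m(x)^{q^{-j}}\to 1$, and the product $c(j,q)^{q^{-j}}$ converges to a positive constant (verified by an explicit estimate of the infinite product). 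This produces exactly $G\sigma_A(x)\le c\,r^{q-1}$. In short, your ``take $(q+1)$-th roots at every step'' scheme is structurally blocked from reaching $q-1$; you must defer the root-taking to the limit and let the exponent in \eqref{r-in} escape to infinity.
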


\begin{proof}
Fix $r\geq a$. We start with (\ref{cond-m-d}) as our first estimate. Let us
raise (\ref{cond-m-d}) to the power $q$ and apply $G\left( \cdot d\sigma
\right) .$ Using further (\ref{cond-m}), we obtain, for any $x\in M$,%
\begin{equation}
C^{-q}\,\,r^{-q^{2}}G[(G\sigma _{A(o,r)})^{q}d\sigma ](x)\leq G[m^{q}d\sigma
](x)\leq C\,m(x).  \label{j=1}
\end{equation}%
By (\ref{r-in}) with $s=1+q$, we have%
\begin{equation*}
\left( G\sigma _{A\left( o,r\right) }\right) ^{1+q}\left( x\right) \leq
\left( 1+q\right) G\left[ \left( G\sigma _{A\left( o,r\right) }\right)
^{q}d\sigma _{A\left( o,r\right) }\right] \left( x\right) ,
\end{equation*}%
which together with (\ref{j=1}) yields
\begin{equation*}
C^{-q}\,r^{-q^{2}}\,(1+q)^{-1}(G\sigma _{A(o,r)})^{1+q}\,\leq C\,m(x).
\end{equation*}%
Raising again to the power $q$, applying $G\left( \cdot d\sigma \right) $
and using (\ref{cond-m}), we obtain
\begin{equation*}
C^{-q^{2}}\,r^{-q^{3}}\,(1+q)^{-q}G[(G\sigma _{A(o,r)})^{q(1+q)}d\sigma
_{A(o,r)}](x)\leq C^{1+q}\,m(x).
\end{equation*}%
By (\ref{r-in}) with $s=1+q+q^{2}=1+q(1+q)$, have%
\begin{equation*}
\left( G\sigma _{A\left( o,r\right) }\right) ^{1+q+q^{2}}\left( x\right)
\leq \left( 1+q+q^{2}\right) G\left[ \left( G\sigma _{A\left( o,r\right)
}\right) ^{q\left( 1+q\right) }d\sigma _{A\left( o,r\right) }\right] \left(
x\right) ,
\end{equation*}%
whence we deduce our third iteration
\begin{equation*}
C^{-q^{2}}\,r^{-q^{3}}\,(1+q)^{-q}(1+q+q^{2})^{-1}(G\sigma
_{A(o,r)})^{1+q+q^{2}}(x)\leq C^{1+q}\,m(x).
\end{equation*}%
Iterating this process further, we obtain, for our $j$-th iteration, as in
\cite[Corollary 2.8]{GV2}, that
\begin{equation}
C^{-q^{j-1}}r^{-q^{j}}\,c(j,q)(G\sigma _{A(o,r)})^{1+q+q^{2}+\cdots
+q^{j-1}}(x)\leq C^{1+q+...+q^{j-1}}\,m(x),  \label{mj}
\end{equation}%
where
\begin{equation*}
c(j,q)=\prod_{k=1}^{j-1}(1+q+q^{2}+\cdots +q^{k})^{-q^{j-1-k}}.
\end{equation*}%
Now we raise both sides of (\ref{mj}) to the power $q^{-j}$, and let $%
j\rightarrow \infty $. Note that, as in the proof of \cite[Theorem 3.8]{KV},
the infinite product
\begin{align*}
& \prod_{k=1}^{\infty }(1+q+q^{2}+\cdots +q^{k})^{q^{-1-k}} \\
=& \prod_{k=1}^{\infty }q^{kq^{-1-k}}\,\prod_{k=1}^{\infty
}(1+q^{-1}+q^{-2}+\cdots +q^{-k})^{q^{-1-k}} \\
\leq & \prod_{k=1}^{\infty }q^{kq^{-1-k}}\,\prod_{k=1}^{\infty }\left( \frac{%
q}{q-1}\right) ^{q^{-1-k}} \\
=& q^{(q-1)^{-2}}\left( \frac{q}{q-1}\right) ^{\frac{1}{q(q-1)}}
\end{align*}%
is convergent. Hence,
\begin{equation*}
c(q)=\lim_{j\rightarrow \infty }c(j,q)^{q^{-j}}=\prod_{k=1}^{\infty
}(1+q+q^{2}+\cdots +q^{k})^{-q^{-1-k}}>0,
\end{equation*}%
from which we obtain
\begin{equation*}
C^{-\frac{1}{q}}\,\,r^{-1}\,c(q)\,(G\sigma _{A(o,r)})^{\frac{1}{q-1}}(x)\leq
C^{\frac{1}{q-1}},
\end{equation*}%
which completes the proof of (\ref{est-ap}).
\end{proof}

\subsection{Completion of proof of Theorem \protect\ref{last}}

\begin{proof}[Proof of necessity]
Assume that (\ref{int-ineq}) has a positive solution. Fix $o\in M$, $a>0$
and define $m$ by (\ref{def-m}), that is,%
\begin{equation*}
m(x):=\min \left( G(x,o),a^{-1}\right) .
\end{equation*}

By Proposition \ref{prop-m}, we have that (\ref{cond-m}) is satisfied. Setting
\begin{equation*}
d\omega =m^{q}d\sigma ,
\end{equation*}%
we obtain%
\begin{equation*}
G\omega \leq Cm,
\end{equation*}%
which, in particular, implies that $G\omega $ is bounded. Raising this
inequality to the power $q$ and integrating against $d\sigma $, we obtain%
\begin{equation*}
G\left( \left( G\omega \right) ^{q}d\sigma \right) \leq c \, G\omega ,
\end{equation*}%
with $c=C^{q}$, which coincides with the hypothesis (\ref{int-cond}) of
Lemma \ref{lemma-r}. By this lemma, we have (\ref{Ggom}), that is, for all $%
g\in L^{q}\left( \omega \right) $,
\begin{equation}
||G(gd\omega )||_{L^{q}(\sigma )}\leq C\,||g||_{L^{q}(\omega )}.
\label{dual-int}
\end{equation}%
Let $K$ be a compact subset of $M$ such that $\omega \left( K\right) >0$.
Notice that $\omega \left( K\right) <\infty $, since $\sigma $ is a Radon
measure and $m$ is bounded. Letting $g=\chi _{K}$ in (\ref{dual-int})  and
observing that by Lemma \ref{lem-harn}%
\begin{equation*}
m\leq CG\left( \chi _{K}d\omega \right) =CG\left( gd\omega \right) ,
\end{equation*}%
we obtain
\begin{equation}
\left\Vert m\right\Vert _{L^{q}\left( \sigma \right) }<\infty ,
\label{cond-m-ql}
\end{equation}%
which proves condition (\ref{last-1}) of Theorem \ref{last}.

%It remains to obtain inequality  \eqref{last-2} which is proved in the next subsection.

In order to prove (\ref{last-2}), observe that by Proposition \ref{prop-m}
we have (\ref{cond-m}). Hence, the hypotheses of Lemma \ref{lemma-aq} are
satisfied, and we conclude by this lemma that (\ref{est-ap}) hold, which
coincides with (\ref{last-2}). This completes the proof of the necessity
part of Theorem \ref{last}.
\end{proof}

\begin{proof}[Proof of sufficiency]
Let us prove that under hypotheses (\ref{last-1}), (\ref{last-2}) and (%
\ref{cond-3g}), inequality (\ref{int-ineq}) has a positive solution. By
Proposition \ref{prop-m}, it suffices to verify (\ref{cond-m}), that is,%
\begin{equation*}
G\left( m^{q}d\sigma \right) \left( x\right) \leq Cm\left( x\right) ,
\end{equation*}%
for all $x\in M$, where%
\begin{equation*}
m(x)=\min \left( G\left( o, x \right), a^{-1}\right) .
\end{equation*}%
Hence, it suffices to verify that, for all $x\in M$%
\begin{equation}
G(m^{q}d\sigma )(x)\leq Ca^{-1}  \label{Ga}
\end{equation}%
and%
\begin{equation}
G(m^{q}d\sigma )(x)\leq CG\left( o,x\right) .  \label{GG}
\end{equation}%
Using integration with respect to the level sets of $m$ and noticing that $%
0\leq m\leq a^{-1}$, we obtain
\begin{eqnarray*}
G(m^{q}d\sigma )(x) &=&\int_{M}G\left( x,y\right) m^{q}\left( y\right)
d\sigma \left( y\right) \\
&=&q\int_{0}^{a^{-1}}\left( \int_{\left\{ y\in M: \, m\left( y\right) >t\right\}
}G\left( x,y\right) d\sigma \left( y\right) \right) t^{q-1}dt \\
&\leq &q\int_{0}^{a^{-1}}\left( \int_{\left\{ y\in M: \, G\left( o,y\right)
>t\right\} }G\left( x,y\right) d\sigma \left( y\right) \right) t^{q-1}dt \\
&=&q\int_{a}^{\infty }\left( \int_{\left\{ y\in M: \, G\left( o,y\right)
>r^{-1}\right\} }G\left( x,y\right) d\sigma \left( y\right) \right)
r^{-q-1}dr \\
&\leq &q  C  \int_{a}^{\infty }r^{q-1}r^{-q-1}dr=q C  a^{-1},
\end{eqnarray*}%
where in the last line we used (\ref{last-2}). Hence, (\ref{Ga}) is
proved.

In order to prove (\ref{GG}), let us set
\begin{equation*}
R=G(x,o),
\end{equation*}%
 where in view of (\ref{Ga})  we may assume that $R<(2 \kappa a)^{-1}$,
 and split the domain of integration in $G(m^{q}d\sigma )$ into two parts:
\begin{equation*}
G(x,y)\leq 2\kappa R\ \ \text{and\ \ }G(x,y)>2\kappa R,
\end{equation*}%
where $\kappa $ is the constant from (\ref{cond-3g}). In the first part, we
have by (\ref{last-1})
\begin{eqnarray*}
\int_{\left\{ y\in M:\,G(x,y)\leq 2\kappa R\right\} }G(x,y)m^{q}(y)d\sigma
(y) &\leq &2\kappa \,R\,\int_{M}m^{q}(y)d\sigma (y) \\
&=&CR=CG\left( x,o\right) .
\end{eqnarray*}%
In the second part, we have $G(x,y)>2\kappa R$ and hence,
\begin{equation*}
\frac{1}{G\left( x,y\right) }<\frac{1}{2\kappa G\left( x,o\right)},
\end{equation*}%
which implies by (\ref{cond-3g})
\begin{eqnarray*}
\frac{1}{G\left( x,o\right) } &\leq &\kappa \left( \frac{1}{G\left(
y,o\right) }+\frac{1}{G\left( x,y\right) }\right) \\
&\leq &\kappa \left( \frac{1}{G\left( y,o\right) }+\frac{1}{2\kappa G\left(
x,o\right) }\right) \\
&=&\frac{\kappa }{G\left( y,o\right) }+\frac{1}{2G\left( x,o\right) }.
\end{eqnarray*}%
It follows that%
\begin{equation*}
\frac{1}{2G\left( x,o\right) }\leq \frac{\kappa }{G\left( y,o\right) },
\end{equation*}%
and hence $G(y,o)\leq 2\kappa G(x,o)$. Consequently, we obtain
\begin{equation*}
m\left( y\right) \leq 2\kappa G(x,o),
\end{equation*}%
and by (\ref{last-2}) with $r=(2\kappa R)^{-1}>a$,
\begin{equation*}
\int_{\left\{ y\in M:\,G(y,o)>2\kappa R\right\} }G(x,y)\,m(y)^{q}d\sigma
(y)\leq C\,G(x,o)\,^{q}\left( 2\kappa R\right) ^{-\left( q-1\right)
}=CG\left( x,o\right) .
\end{equation*}%
Combining with the previous estimate, we obtain (\ref{cond-m}), thus
finishing the proof.
\end{proof}

%
%By the (WMP), we may take the supremum in \eqref{est-ap} over all $x \in \Omega$,
%if the main estimate in the preceding lemma holds for $x \in B(p, r)$.
%

\section{Proofs of Theorems \protect\ref{thm2} and \protect\ref{thm3}}

\label{Sec23}We prove here Theorems \ref{thm2} and \ref{thm3} using our
Theorem \ref{last}. Hence, in the proof of the necessary conditions in
Theorems \ref{thm2} and \ref{thm3} we can assume that conditions (\ref%
{last-1}) and (\ref{last-2}) are satisfied, for any $a>0$.

\begin{proof}[Proof of Theorem \protect\ref{thm2}]
Using (\ref{cond-gt}), (\ref{last-1}) with large enough $a$ and integration
with respect to the level sets, obtain
\begin{align}
& \infty >\int_{M}\min \left( G\left( x,o\right) ,a^{-1}\right) ^{q}d\sigma
\left( x\right)  \notag \\
& \geq \int_{\left\{ x\in M: \, \tilde{d}(x,o)\geq r_{0}\right\} }\min \left(
G\left( x,o\right) ,a^{-1}\right) ^{q}d\sigma \left( x\right)  \label{2} \\
& \geq c\int_{\left\{ x\in M: \, \tilde{d}(x,o)\geq r_{0}\right\} }\left[ \min
\left( \tilde{d}(x,o)^{-\gamma },a^{-1}\right) \right] ^{q}d\sigma (x)
\label{3} \\
& =cq\,\int_{0}^{\min \left( r_{0}^{-\gamma },a^{-1}\right) }\sigma \left(
\{x\in M: \,\min \left( (\tilde{d}(x,o))^{-\gamma },a^{-1}\right) >s\}\right)
\,s^{q-1}\,ds  \notag \\
& =cq\,\int_{0}^{a^{-1}}\sigma \left( \{x\in M: \,\tilde{d}(x,o))^{-\gamma
}>s\}\right) \,s^{q-1}\,ds  \notag \\
& =cq\,\int_{0}^{a^{-1}}\sigma \left( \tilde{B}\left( o,s^{-1/\gamma
}\right) \right) \,s^{q-1}\,ds  \notag \\
& =c\gamma q\,\int_{a^{\gamma }}^{\infty }\sigma (\tilde{B}(o,r))\frac{dr}{%
r^{\gamma \,q+1}},  \notag
\end{align}%
whence (\ref{cond-1}) follows.

Assuming that (\ref{cond-gxy}) is satisfied, let us deduce (\ref{cond-2}).
We have, for any $r>0,$
\begin{equation*}
G\left( o,y\right) >r^{-1}\Leftarrow \tilde{d}(o,y)<\left( cr\right)
^{1/\gamma }=:\rho ,
\end{equation*}%
so that%
\begin{equation*}
\left\{ y\in M:G\left( o,y\right) >r^{-1}\right\} \supset \tilde{B}(o,\rho ).
\end{equation*}%
Applying (\ref{cond-gxy}) again, we obtain%
\begin{equation*}
\int_{\left\{ y\in M: \, G\left( o,y\right) >r^{-1}\right\} }G\left( x,y\right)
d\sigma \left( y\right) \geq c\int_{\tilde{B}(o,\rho )}\tilde{d}%
(x,y)^{-\gamma }d\sigma (y),
\end{equation*}%
which together with (\ref{last-2}) yields%
\begin{equation*}
\int_{\tilde{B}(o,\rho )}\tilde{d}(x,y)^{-\gamma }d\sigma (y)\leq C\rho
^{\gamma (q-1)},
\end{equation*}%
for all $x\in M$ and $\rho >\left( ca\right) ^{1/\gamma }$. Using
integration with respect to the level sets of $\tilde{d}(x,\cdot )$, we
obtain
\begin{equation*}
\int_{\tilde{B}(o,\rho )}\tilde{d}(x,y)^{-\gamma }d\sigma (y)=\gamma
\int_{0}^{\infty }\sigma \left( \tilde{B}(o,\rho )\cap \tilde{B}(x,s)\right)
\,s^{-\gamma -1}ds.
\end{equation*}%
whence
\begin{equation}
\int_{0}^{\infty }\sigma \left( \tilde{B}(o,\rho )\cap \tilde{B}(x,s)\right)
\,s^{-\gamma -1}ds\leq C\rho ^{\gamma (q-1)},  \label{last2eq}
\end{equation}%
for all $x\in M$ and $\rho >\left( ca\right) ^{1/\gamma }$.

If $x\in \tilde{B}(o,\frac{\rho }{2})$ and $0<s\leq \frac{\rho }{2}$, then $%
\tilde{B}(x,s)\subset \tilde{B}(o,\rho )$. Hence, we obtain, for all $x\in
\tilde{B}(o,\frac{\rho }{2})$,
\begin{equation*}
\int_{0}^{\frac{\rho }{2}}\sigma (\tilde{B}(x,s))\,s^{-\gamma -1}ds\leq
C\,\rho ^{\gamma (q-1)}.
\end{equation*}%
Denoting $r=\frac{\rho }{2}$, we obtain the condition (\ref{cond-2}).
\end{proof}

\begin{proof}[Proof of Theorem \protect\ref{thm3}]
As was mentioned above, condition (\ref{cond-g}) is equivalent to (\ref%
{cond-3g}), that is, $G$ is a quasi-metric kernel. Therefore, Theorem \ref{thm3}
can be deduced from \cite[Theorem 4.10]{KV}. However, we give here an independent
proof.

The necessity of conditions (\ref{cond-1}) and (\ref{cond-2}) follows from
Theorem \ref{thm2}.

We will prove the sufficiency of conditions (\ref{cond-1}) and (\ref{cond-2}%
) by showing that they imply conditions (\ref{last-1}) and (\ref{last-2}%
), respectively. Consequently, the existence of a solution of (\ref{int-ineq}) follows by the second
part of Theorem \ref{last}.

In the proof of Theorem \ref{thm2} we have shown that (\ref{cond-gt}), (\ref%
{last-1}) implies (\ref{cond-1}). The same argument shows that, if (\ref%
{cond-g}) holds, then (\ref{cond-1}) implies (\ref{last-1}). Indeed, in (\ref%
{3}) we have $\approx $ instead of $\geq $, so that (\ref{cond-1}) yields%
\begin{equation*}
\int_{\left\{ x\in M: \, \tilde{d}(x,o)\geq r_{0}\right\} }\min \left( G\left(
x,o\right) ,a^{-1}\right) ^{q}d\sigma \left( x\right) <\infty .
\end{equation*}%
Since%
\begin{equation*}
\int_{\left\{ x\in M: \, \tilde{d}(x,o)<r_{0}\right\} }\min \left( G\left(
x,o\right) ,a^{-1}\right) ^{q}d\sigma \left( x\right) \leq a^{-q}\sigma (%
\widetilde{B}\left( o,r_{0}\right) )<\infty ,
\end{equation*}%
we obtain (\ref{last-1}).

Let us now obtain (\ref{last-2}). It follows from (\ref{cond-1}) and the
monotonicity of $\sigma (\tilde{B}(o,t))$ in $t$, that
\begin{equation}
\sigma (\tilde{B}(o,t))\leq Ct^{\gamma q},  \label{es-metball}
\end{equation}%
for all $t>r_{0}.$ Under the hypotheses (\ref{cond-g}), that is,
\begin{equation*}
G(x,y)\approx \tilde{d}(x,y)^{-\gamma },\quad \text{for all }x,y\in M.
\end{equation*}%
the condition (\ref{last-2}), that is,
\begin{equation*}
\sup_{x\in M}\int_{\{y: \, G(o,y)>r^{-1}\}}G(x,y)d\sigma (y)\leq Cr^{q-1},\quad
\text{for all }r>r_{0},
\end{equation*}%
where $r_{0}>0$, is clearly equivalent to
\begin{equation*}
\sup_{x\in M}\int_{\tilde{B}(o,t)}\tilde{d}(x,y)^{-\gamma }d\sigma (y)\leq
Ct^{\gamma (q-1)}\ \ \text{for all }t>t_{0},
\end{equation*}%
where $t_{0}>0$. The latter is in turn equivalent to%
\begin{equation*}
\int_{0}^{\infty }\frac{\sigma (\tilde{B}(x,s)\cap \tilde{B}(o,t))}{%
s^{\gamma +1}}ds\leq Ct^{\gamma (q-1)}
\end{equation*}%
for all $x\in M$ and $t>t_{0}$. Note first that by (\ref{es-metball})%
\begin{equation*}
\int_{t}^{\infty }\frac{\sigma (\tilde{B}(x,s)\cap \tilde{B}(o,t))}{%
s^{\gamma +1}}ds\leq \int_{t}^{\infty }\frac{Ct^{\gamma q}}{s^{\gamma +1}}%
ds=Ct^{\gamma \left( q-1\right) }.
\end{equation*}%
To estimate a similar integral from $0$ to $t$, observe first that the
intersection%
\begin{equation*}
\tilde{B}(x,s)\cap \tilde{B}(o,t)
\end{equation*}%
is empty if%
\begin{equation*}
\tilde{d}\left( x,o\right) \geq s+t,
\end{equation*}%
In particular, if $\tilde{d}\left( x,o\right) \geq 2t$, then%
\begin{equation*}
\int_{0}^{t}\frac{\sigma (\tilde{B}(x,s)\cap \tilde{B}(o,t))}{s^{\gamma +1}}%
ds=0.
\end{equation*}%
Assume now that $\tilde{d}\left( x,o\right) <2t$. Then $x\in \tilde{B}(o,2t)$
and we obtain by (\ref{cond-2}) that%
\begin{equation*}
\int_{0}^{t}\frac{\sigma (\tilde{B}(x,s)\cap \tilde{B}(o,t))}{s^{\gamma +1}}%
ds\leq \int_{0}^{2t}\frac{\sigma (\tilde{B}(x,s))}{s^{\gamma +1}}ds\leq
Ct^{\gamma \left( q-1\right) },
\end{equation*}%
which was to be proved.
\end{proof}

\section{Proofs of Theorems \protect\ref{thm1}, \protect\ref{main} and
Corollary \protect\ref{cor-dx}}

\label{sec-main}\label{sec-thms}Fix\textbf{\ }$o\in M$ and define for any $%
\rho >0$
\begin{equation}
R(\rho ):=\int_{\rho }^{+\infty }\frac{tdt}{\mu (B(o,t))}.  \label{R}
\end{equation}%
Notice that $R(\rho )$ is a decreasing function of $\rho $, and by the
doubling property (\ref{D}),%
\begin{equation}
R(\rho )\leq CR(2\rho ),\quad \rho >0.  \label{R1}
\end{equation}%
Indeed, letting $t=2s$ in (\ref{R}), we obtain
\begin{equation*}
R(2\rho )=\int_{2\rho }^{+\infty }\frac{tdt}{\mu (B(o,t))}=4\int_{\rho
}^{+\infty }\frac{sds}{\mu (B(o,2s))}\geq cR(\rho ).
\end{equation*}

\begin{lemma}
\label{quasi-lem} Suppose $\beta >0$, and $R(\rho )$ ($\rho >0$) satisfies (%
\ref{R}). We set
\begin{equation*}
\tilde{d}(x,y)=1/R(d(x,y)),\quad x,y\in M.
\end{equation*}%
If the doubling property (\ref{D}) holds, then $\tilde{d}$ satisfies the
quasi-triangle inequality
\begin{equation*}
\tilde{d}(x,z)\leq \kappa \,\left[ \tilde{d}(x,y)+\tilde{d}(y,z)\right]
,\quad x,y,z\in M,
\end{equation*}%
with some $\kappa >0$.
\end{lemma}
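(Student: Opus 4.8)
The plan is to reduce the quasi-triangle inequality for $\tilde d = 1/R(d(\cdot,\cdot))$ to two elementary facts about the function $R$: first, that $R$ is decreasing, so $\tilde d$ is nonnegative and symmetric and large when $d$ is small; and second, the "reverse doubling'' estimate \eqref{R1}, $R(\rho)\le C R(2\rho)$, which will play the role of a regularity condition on $R$. Since $R$ is decreasing, $1/R$ is increasing, so by the ordinary triangle inequality for the geodesic distance $d$ we have $d(x,z)\le d(x,y)+d(y,z)\le 2\max(d(x,y),d(y,z))$, and therefore
\[
\tilde d(x,z)=\frac{1}{R(d(x,z))}\le \frac{1}{R\bigl(2\max(d(x,y),d(y,z))\bigr)}.
\]
Thus everything comes down to controlling $1/R(2t)$ by $1/R(t)$, i.e. to showing $R(t)\le \kappa\, R(2t)$ for all $t>0$ — which is precisely \eqref{R1}, a consequence of the volume doubling property \eqref{D} as already computed in the excerpt right before the lemma.

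The key steps, in order, are: (i) record that $t\mapsto 1/R(t)$ is nondecreasing on $(0,\infty)$ since $R$ is decreasing and positive (finiteness of $R$ everywhere follows from \eqref{cond-0} together with doubling, which makes $\mu(B(o,t))$ comparable across scales); (ii) apply the triangle inequality for $d$ and monotonicity to get $\tilde d(x,z)\le 1/R\bigl(2\max(d(x,y),d(y,z))\bigr)$; (iii) invoke \eqref{R1} in the form $1/R(2t)\le \kappa / R(t)$ to conclude
\[
\tilde d(x,z)\le \kappa\,\frac{1}{R\bigl(\max(d(x,y),d(y,z))\bigr)}
=\kappa\,\max\bigl(\tilde d(x,y),\tilde d(y,z)\bigr)
\le \kappa\,\bigl[\tilde d(x,y)+\tilde d(y,z)\bigr],
\]
using once more that $1/R$ is increasing to identify $1/R(\max(\cdot,\cdot))$ with $\max(\tilde d(x,y),\tilde d(y,z))$. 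This gives the claimed inequality with $\kappa$ the constant from \eqref{R1}, which depends only on the doubling constant $C$ in \eqref{D}.

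I expect the main (and only real) obstacle to be purely a bookkeeping point rather than a conceptual one: making sure that $R(\rho)$ is finite and strictly positive for every $\rho>0$ so that $\tilde d$ is a well-defined finite nonnegative function — finiteness is guaranteed by the standing assumption \eqref{cond-0} (which, under doubling, holds for all $r_0$, not just one), and positivity is immediate since the integrand in \eqref{R} is positive. One should also note that $\tilde d(x,x)=1/R(0)$ need not be zero, but this is harmless: the statement only claims the quasi-triangle inequality, and in any case $R(0^+)=+\infty$ so $\tilde d(x,x)=0$; the values $\tilde d(x,y)$ for $x\ne y$ are what matter, and there $d(x,y)>0$ forces $R(d(x,y))<\infty$, hence $\tilde d(x,y)>0$. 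Beyond this, the argument is the two-line computation above, so no further difficulty is anticipated.
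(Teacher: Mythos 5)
Your proof is correct and follows essentially the same route as the paper's: both arguments combine the bound $R(\rho)\le C\,R(2\rho)$ (eq.~\eqref{R1}, derived from volume doubling) with the observation that the ordinary triangle inequality forces $d(x,z)\le 2\max\bigl(d(x,y),d(y,z)\bigr)$ --- the paper phrases this as ``either $d(x,y)\ge\tfrac12 d(x,z)$ or $d(y,z)\ge\tfrac12 d(x,z)$,'' which is the same fact. Your rewriting in terms of $\max$ and $1/R$ rather than an explicit two-case split is a purely cosmetic repackaging of the identical argument.
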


\begin{proof}
Clearly, it suffices to prove
\begin{equation}
\min \left[ R(d(x,y)),R(d(y,z))\right] \leq \kappa \,R(d(x,z)).  \label{max}
\end{equation}%
Since $d$ is a metric, for every triple $x,y,z\in M$, we have that either $%
d(x,y)\geq \frac{1}{2}d(x,z)$, or $d(y,z)\geq \frac{1}{2}d(x,z)$.

If $d(x,y)\geq \frac{1}{2}d(x,z)$, then by (\ref{R1}),
\begin{equation*}
R(d(x,y))\leq R\left( \frac{1}{2}d(x,z)\right) \leq C\,R\left( d(x,z)\right)
.
\end{equation*}%
If $d(y,z)\geq \frac{1}{2}d(x,z)$, then similarly
\begin{equation*}
R(d(y,z))\leq R\left( \frac{1}{2}d(x,z)\right) \leq C\,R\left( d(x,z)\right)
,
\end{equation*}%
which finishes the proof of (\ref{max}).
\end{proof}

\begin{proof}[Proof of Theorem \protect\ref{thm1}]
By Theorem \ref{last}, the existence a positive solution (\ref{int-ineq})
implies (\ref{last-1}) and (\ref{last-2}) for any $a>0$. Using condition (%
\ref{cond-l}), we can replace $G(x,o)$ in (\ref{last-1}) by $R(d(x,o))$,
where the function $R\left( \rho \right) $ is defined by (\ref{R}), thus
obtaining%
\begin{equation}
\int_{M}\min \left( R(d(x,o)),a^{-1}\right) ^{q}d\sigma <\infty .
\label{int-R}
\end{equation}%
Integration in level sets of $\min \left( R(d(x,o)),a^{-1}\right) $ yields%
\begin{align}
& \int_{M}\min \left( R(d(x,o)),a^{-1}\right) ^{q}d\sigma  \notag \\
& =q\,\int_{0}^{\infty }\sigma \left( \{x\in M:\,\min \left(
R(d(x,o)),a^{-1}\right) >s\}\right) \,s^{q-1}\,ds  \notag \\
& =q\,\int_{0}^{a^{-1}}\sigma \left( \{x\in M:\,R(d(x,o))>s\}\right)
\,s^{q-1}\,ds.  \label{3a}
\end{align}%
Making here a change $s=R\left( r\right) $, observing that
\begin{equation*}
\{x\in M:\,R(d(x,o))>s\}=B(o,r),
\end{equation*}%
and setting $a^{-1}=R(r_{0})$ we obtain that
\begin{equation}
\int_{M}\min \left( R(d(x,o)),a^{-1}\right) ^{q}d\sigma
=q\,\int_{r_{0}}^{\infty }\left[ \int_{r}^{\infty }\frac{tdt}{\mu (B(o,t))}%
\right] ^{q-1}\frac{\sigma (B(o,r))}{\mu (B(o,r))}r\,dr,  \label{Rsigma}
\end{equation}%
which together with (\ref{int-R}) finishes the proof of (\ref{cond-int1}).

{Let us now deduce (\ref{cond-int2}), assuming (\ref{D}) and (\ref{cond-lxy}%
). By (\ref{cond-lxy}), we have, for any $r>0,$
\begin{equation*}
G\left( o,y\right) >r^{-1}\Leftarrow R\left( d\left( o,y\right) \right)
>\left( cr\right) ^{-1}.
\end{equation*}%
If }$r>r_{0}$ for some large $r_{0}$ then the equation%
\begin{equation*}
\left( cr\right) ^{-1}=\int_{\rho }^{\infty }\frac{tdt}{\mu (B(o,t))}%
=R\left( \rho \right)
\end{equation*}%
has a unique positive solution $\rho =\rho \left( r\right) $. {Hence,}%
\begin{equation*}
G\left( o,y\right) >r^{-1}\Leftarrow R\left( d\left( o,y\right) \right)
>R\left( \rho \right) \Leftrightarrow d\left( o,y\right) <\rho ,
\end{equation*}%
so that{%
\begin{equation*}
\left\{ y\in M:G\left( o,y\right) >r^{-1}\right\} \supset B(o,\rho ).
\end{equation*}%
By (\ref{cond-lxy}), we obtain%
\begin{equation}
\int_{\left\{ y\in M:G\left( o,y\right) >r^{-1}\right\} }G\left( x,y\right)
d\sigma \left( y\right) \geq c\int_{B(o,\rho )}R\left( d\left( x,y\right)
\right) d\sigma (y),  \label{G>R}
\end{equation}%
On the other hand, by Fubini's theorem, we have }%
\begin{eqnarray}
\int_{B(o,\rho )}R\left( d\left( x,y\right) \right) d\sigma (y)
&=&\int_{B(o,\rho )}\int_{d(x,y)}^{\infty }\frac{tdt}{\mu (B(x,t))}d\sigma
(y)  \notag \\
&=&\int_{0}^{\rho }\frac{\sigma (B(o,\rho )\cap B(x,t))}{\mu (B(x,t))}tdt.
\label{R2}
\end{eqnarray}%
Hence, {\ (\ref{last-2}) yields, for all }$x\in M$ and $r>a$,{\
\begin{equation*}
\int_{0}^{\rho }\frac{\sigma (B(o,\rho )\cap B(x,t))}{\mu (B(x,t))}tdt\leq
Cr^{q-1}.
\end{equation*}%
}If $x\in B(o,\frac{\rho }{2})$ and $0<s\leq \frac{\rho }{2}$, then $%
B(x,s)\subset B(o,\rho )$. Hence, we obtain, for all $x\in B(o,\frac{\rho }{2%
})$,
\begin{equation*}
\int_{0}^{\frac{\rho }{2}}\frac{\sigma (B(x,t))}{\mu (B(x,t))}tdt\leq
Cr^{q-1}=CR\left( \rho \right) ^{-\left( q-1\right) }.
\end{equation*}%
Using (\ref{R1}), we conclude that%
\begin{equation*}
\int_{0}^{\frac{\rho }{2}}\frac{\sigma (B(x,t))}{\mu (B(x,t))}tdt\leq
CR\left( \rho /2\right) ^{-\left( q-1\right) }.
\end{equation*}%
Renaming $\rho /2$ by $r$ we obtain (\ref{cond-int2}).
\end{proof}

%%%%%%%%%%%

%%%%%%%%%%%

\begin{proof}[Proof of Theorem \protect\ref{main}]
The necessity of conditions (\ref{cond-int1}) and (\ref{cond-int2}) follows
from Theorem \ref{thm1}.

Let us prove that (\ref{cond-int1}) and (\ref{cond-int2}) are sufficient for
the existence of a positive solution of (\ref{int-ineq}). It follows from (%
\ref{ly}) and Lemma \ref{quasi-lem} that $G$ satisfies (\ref{cond-3g}).
Hence, by Theorem \ref{last}, it suffices to verify the conditions (\ref%
{last-1}) and (\ref{last-2}).

Indeed, by (\ref{cond-int1}), the right hand side of (\ref{Rsigma}) is
finite, which together with (\ref{ly}) implies (\ref{last-1}).

Let us now verify (\ref{last-2}), assuming that (\ref{ly}), (\ref{D}),(\ref%
{cond-int1}) and (\ref{cond-int2}) are satisfied. Using (\ref{ly}) and
arguing as in the proof of Theorem \ref{thm1}, we obtain, for all $x\in M$, $%
r>0$ and $\rho $ such that $\left( cr\right) ^{-1}=R\left( \rho \right) $:
\begin{eqnarray*}
\int_{\{y\in M:G(o,y)>r^{-1}\}}G(x,y)d\sigma (y) &\leq &C\int_{B(o,\rho
)}R\left( d\left( x,y\right) \right) d\sigma (y) \\
&=&\int_{0}^{\rho }\frac{\sigma (B(o,\rho )\cap B(x,t))}{\mu (B(x,t))}tdt \\
&\leq &\int_{0}^{\rho }\frac{\sigma (B(x,t))}{\mu (B(x,t))}tdt,
\end{eqnarray*}%
Estimating the right hand side by (\ref{cond-int2}), we obtain
\begin{equation*}
\int_{\{y\in M:G(o,y)>r^{-1}\}}G(x,y)d\sigma (y)\leq C\left( \int_{\rho
}^{\infty }\frac{tdt}{\mu (B(o,t))}\right) ^{1-q}=CR\left( \rho \right)
^{1-q}=Cr^{q-1},
\end{equation*}%
which is exactly (\ref{last-2}).
\end{proof}

For the proof of Corollary \ref{cor-dx} we will need the following lemma.

\begin{lemma}
\label{prop-s} Let $s\in (0,1)$, an let $\phi :\,(0,+\infty )\rightarrow
(0,+\infty )$ be a non-increasing function. Then there exists a positive
constant $C=C(s)$ such that, for all $r>0$,
\begin{equation}
\left( \int_{r}^{\infty }\phi (t)\,t\,dt\right) ^{s}\leq C\int_{r}^{\infty
}\phi (t)^{s}\,t^{2s-1}\,dt+Cr^{2s}\,\phi (r)^{s}.  \label{est-decr}
\end{equation}
\end{lemma}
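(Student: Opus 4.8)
The plan is to control $\int_r^\infty\phi(t)\,t\,dt$ by a dyadic decomposition, and then to use the elementary inequality $\bigl(\sum_k a_k\bigr)^s\le\sum_k a_k^s$, valid for any non-negative numbers $a_k$ and any $s\in(0,1]$. (If $S:=\sum_k a_k$ is finite and positive then $a_j/S\le 1$ gives $(a_j/S)^s\ge a_j/S$, and summing over $j$ yields $\sum_j a_j^s\ge S^s$; the cases $S=0$ and $S=+\infty$ are immediate, so the inequality holds in $[0,+\infty]$. Consequently no integrability hypothesis on $\phi$ is needed: all the estimates below are read in $[0,+\infty]$.)

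First I would split $\int_r^\infty\phi(t)\,t\,dt=\sum_{k=0}^\infty\int_{2^kr}^{2^{k+1}r}\phi(t)\,t\,dt$. On the annulus $[2^kr,2^{k+1}r]$ the monotonicity of $\phi$ gives $\phi(t)\le\phi(2^kr)$, and a direct computation gives $\int_{2^kr}^{2^{k+1}r}t\,dt=\frac{3}{2}\,4^k r^2$, so the $k$-th block is at most $\frac{3}{2}\,4^k r^2\,\phi(2^kr)$. Raising to the power $s$ and applying the subadditivity inequality above,
\begin{equation*}
\Bigl(\int_r^\infty\phi(t)\,t\,dt\Bigr)^s\le\Bigl(\frac{3}{2}\Bigr)^s r^{2s}\sum_{k=0}^\infty 4^{ks}\,\phi(2^kr)^s .
\end{equation*}

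Next I would dominate this series by the right-hand side of \eqref{est-decr}. For $k\ge 1$, on the annulus $[2^{k-1}r,2^kr]$ monotonicity gives $\phi(t)^s\ge\phi(2^kr)^s$, while $\int_{2^{k-1}r}^{2^kr}t^{2s-1}\,dt=c_s\,4^{ks}r^{2s}$ with $c_s:=\frac{1-2^{-2s}}{2s}$, which is strictly positive for $s\in(0,1)$. Hence $4^{ks}r^{2s}\phi(2^kr)^s\le c_s^{-1}\int_{2^{k-1}r}^{2^kr}\phi(t)^s\,t^{2s-1}\,dt$. Since the annuli $[2^{k-1}r,2^kr]$, $k\ge1$, partition $[r,\infty)$, summing over $k\ge1$ produces $c_s^{-1}\int_r^\infty\phi(t)^s\,t^{2s-1}\,dt$, whereas the $k=0$ term equals exactly $r^{2s}\phi(r)^s$. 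Combining this with the displayed inequality above yields \eqref{est-decr} with $C=\bigl(\frac{3}{2}\bigr)^s\max\{1,c_s^{-1}\}$.

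There is no serious obstacle here; the two points that need a little care are the following. First, when $s<\frac12$ the exponent $2s-1$ is negative and $t^{2s-1}$ is singular at $0$; this is harmless because every annulus used in the lower bound is bounded away from the origin, and $c_s=(1-2^{-2s})/(2s)$ stays strictly positive on all of $(0,1)$. Second, the boundary term $r^{2s}\phi(r)^s$ in \eqref{est-decr} is genuinely needed: it is precisely the innermost ($k=0$) dyadic block, which has no annulus of $[r,\infty)$ to its left into which it could be absorbed. Finally, the reason one must raise the dyadic sum to the power $s$ before comparing term by term is a matching of homogeneities: $\int\phi(t)\,t\,dt$ scales like $r^2\phi$, whereas $\int\phi(t)^s t^{2s-1}\,dt$ scales like $r^{2s}\phi^s$, and the two become comparable only after taking $s$-th powers.
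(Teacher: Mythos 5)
Your proof is correct, and it takes a genuinely different route from the paper's. The paper uses the continuous ``differentiation of powers'' identity
$\left(\int_r^\infty\phi\,t\,dt\right)^{s}=s\int_r^\infty\left(\int_r^t\phi(\tau)\,\tau\,d\tau\right)^{s-1}\phi(t)\,t\,dt$;
since $s-1<0$ and $\phi$ is non-increasing, the inner integral is bounded below by $\phi(t)\int_r^t\tau\,d\tau$, which after explicit evaluation produces the weight $(t^2-r^2)^{s-1}$, and the outer integral is then split at $t=2r$, with the singular piece on $[r,2r]$ yielding the boundary term $r^{2s}\phi(r)^s$. You instead discretize $[r,\infty)$ into dyadic annuli, use monotonicity on each block, and invoke the elementary subadditivity $\bigl(\sum_k a_k\bigr)^s\le\sum_k a_k^s$ for $s\in(0,1]$; your $k=0$ block is exactly what produces the boundary term. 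Both arguments hinge on the same two ingredients (monotonicity of $\phi$ and concavity-type behavior of $x\mapsto x^s$ for $s<1$), but yours is a discrete rearrangement that avoids the singular integrand $(t^2-r^2)^{s-1}$ near $t=r$ and the associated change of variable, at the modest cost of tracking a geometric sum; it also makes transparent that no integrability of $\phi$ is required since everything can be read in $[0,+\infty]$. Both yield explicit constants of comparable quality.
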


\begin{proof}
We have
\begin{align*}
\left( \int_{r}^{\infty }\phi (t)\,t\,dt\right) ^{s}& =s\,\int_{r}^{\infty
}\left( \int_{r}^{t}\phi (\tau )\,\tau \,d\tau \right) ^{s-1}\phi (t)\,t\,dt
\\
& \leq s\,\int_{r}^{\infty }\left( \int_{r}^{t}\,\tau \,d\tau \right)
^{s-1}\phi (t)^{s}\,t\,dt \\
& =s\,2^{1-s}\,\int_{r}^{\infty }\left( t^{2}-r^{2}\right) ^{s-1}\phi
(t)^{s}\,t\,dt \\
& =s\,2^{1-s}\,(I_{1}+I_{2}),
\end{align*}%
where
\begin{equation*}
I_{1}=\int_{2r}^{\infty }\left( t^{2}-r^{2}\right) ^{s-1}\phi
(t)^{s}\,t\,dt,\quad I_{2}=\int_{r}^{2r}\left( t^{2}-r^{2}\right) ^{s-1}\phi
(t)^{s}t\,dt.
\end{equation*}%
Clearly, for $t>2r$,
\begin{equation*}
\left( t^{2}-r^{2}\right) ^{s-1}\leq \left( \frac{4}{3}\right)
^{1-s}\,t^{2(s-1)},
\end{equation*}%
whence
\begin{equation*}
I_{1}\leq \left( \frac{4}{3}\right) ^{1-s}\,\int_{2r}^{\infty }\phi
(t)^{s}\,t^{2s-1}\,dt.
\end{equation*}%
On the other hand, the change $\xi =t^{2}-r^{2}$ yields
\begin{equation*}
\int_{r}^{2r}\left( t^{2}-r^{2}\right) ^{s-1}t\,dt=\frac{1}{2}%
\int_{0}^{3r^{2}}\xi ^{s-1}d\xi =\frac{3^{s}}{2s}r^{2s},
\end{equation*}%
whence
\begin{equation*}
I_{2}\leq \phi (r)^{s}\int_{r}^{2r}\left( t^{2}-r^{2}\right) ^{s-1}t\,dt=%
\frac{3^{s}}{2s}r^{2s}\phi (r)^{s}.
\end{equation*}%
Combining the estimates of $I_{1}$ and $I_{2}$ we deduce (\ref{est-decr}).
\end{proof}

\begin{proof}[Proof of Corollary \protect\ref{cor-dx}]
We need to show that the condition
\begin{equation}
\int_{r_{0}}^{+\infty }\left[ \int_{r}^{+\infty }\frac{tdt}{\mu (B(o,t))}%
\right] ^{q-1}rdr<\infty ,  \label{cond-int1ab}
\end{equation}%
is equivalent to a simpler condition
\begin{equation}
\int_{r_{0}}^{+\infty }\frac{r^{2q-1}dr}{[\mu (B(o,r))]^{q-1}}<\infty .
\label{cond-int1bb}
\end{equation}%
Indeed, the implication (\ref{cond-int1ab})$\Rightarrow $(\ref{cond-int1bb})
follows trivially by reducing the domain $[r,\infty )$ of integration in (%
\ref{cond-int1ab}) to $\left[ r,2r\right] .$

The converse implication follows from the following inequality that holds
for any non-increasing function $\phi \geq 0$ and any $s>0$:
\begin{equation}
\int_{a}^{+\infty }\left( \int_{r}^{+\infty }\phi (t)\,t\,dt\right)
^{s}rdr\leq C(s)\,\int_{a}^{+\infty }\phi (t)^{s}t^{2s+1}dt.  \label{hardy}
\end{equation}%
Indeed, applying (\ref{hardy}) with $s=q-1$, $a=r_{0}$, and $\phi (t)=\frac{1%
}{\mu (B(o,t))}$, we see that (\ref{cond-int1bb}) yields (\ref{cond-int1ab}%
).

In the case $s\geq 1$, inequality (\ref{hardy}) holds for all
non-negative measurable functions $\phi $, and is known as Hardy's inequality
(see, for instance, \cite[Sec. 1.3.1]{M}). In the case $0<s<1$, (\ref{hardy}%
) for non-increasing functions $\phi $ follows from Lemma \ref{prop-s} by
integrating both sides of (\ref{est-decr}) with respect to $r dr$, which yields%
\begin{eqnarray*}
&&\int_{a}^{\infty }\left( \int_{r}^{\infty }\phi (t)\,t\,dt\right) ^{s}rdr
\notag \\
&\leq &C\int_{a}^{+\infty }\left( \int_{r}^{\infty }\phi
(t)^{s}\,t^{2s-1}\,dt\right) rdr+C\int_{a}^{+\infty }\,\phi (r)^{s}r^{2s+1}dr
\\
&=&C\int_{a}^{+\infty }\left( \int_{0}^{t}rdr\right) \phi
(t)^{s}\,t^{2s-1}\,dt+C\int_{a}^{+\infty }\phi (r)^{s}r^{2s+1}dr \\
&=&C\int_{a}^{+\infty }\phi (t)^{s}\,t^{2s+1}\,dt.
\end{eqnarray*}
\end{proof}


\begin{thebibliography}{99}


\bibitem{BC} \textsc{H. Brezis and X. Cabr\'{e}}, \emph{Some simple
nonlinear PDE's without solutions}, Boll. Unione Mat. Ital. \textbf{8}, Ser.
1-B (1998) 223--262.



\bibitem{FNV} \textsc{M. Frazier, F. Nazarov, and I. Verbitsky,} \emph{%
Global estimates for kernels of Neumann series and Green's functions}, J.
London Math. Soc. \textbf{90} (3) (2014) 903--918.

\bibitem{G85} \textsc{A. Grigor'yan}, \emph{On the existence of positive
fundamental solution of the Laplace equation on Riemannian manifolds},
Matem. Sb. \textbf{128} (1985) 354--363. English transl. Math. USSR Sb.
\textbf{56} (1987) 349--358.

\bibitem{G0} \textsc{A. Grigor'yan}, \emph{Analytic and geometric background
of recurrence and non-explosion of the Brownian motion on Riemannian
manifolds}, Bull. Amer. Math. Soc. \textbf{36} (1999) 135--249.

\bibitem{G} \textsc{A. Grigor'yan}, \emph{\ Heat Kernel and Analysis on
Manifolds}, AMS/IP Studies in Adv. Math. \textbf{47}, 2009.

\bibitem{GH} \textsc{A. Grigor'yan and W. Hansen}, \emph{Lower estimates for
a perturbed Green function,} J. d'Analyse Math. \textbf{104} (2008) 25--58.

\bibitem{GK} \textsc{A. Grigor'yan and V. A. Kondratiev}, \emph{On the
existence of positive solutions of semi-linear elliptic inequalities on
Riemannian manifolds}, Int. Math. Ser. \textbf{12} (2010) 203--218.

\bibitem{GSC} \textsc{A. Grigor'yan and L. Saloff-Coste}, \emph{Heat kernel
on manifolds with ends}, Ann. Inst. Fourier \textbf{59} (2009), 1917--1997.

\bibitem{GS1} \textsc{A. Grigor'yan and Y. Sun,} \emph{On nonnegative
solutions of the inequality $\Delta u+u^{\sigma} \le 0$ on Riemannian
manifolds}, Comm. Pure Appl. Math. \textbf{67} (2014) 1336--1352.

\bibitem{GS2} \textsc{A. Grigor'yan and Y. Sun,} \emph{On positive solutions
of semi-linear elliptic inequalities on Riemannian manifolds}, preprint
(2017), www.math.uni-bielefeld.de/~grigor/vag.pdf.

\bibitem{GV1} \textsc{A. Grigor'yan and I. E. Verbitsky,} \emph{Pointwise
estimates of solutions to semilinear elliptic equations and inequalities},
J. d'Analyse Math. (to appear), arXiv:1511.03188.

\bibitem{GV2} \textsc{A. Grigor'yan and I. E. Verbitsky,} \emph{Pointwise
estimates of solutions to nonlinear equations for nonlocal operators}, Ann.
Scuola Norm. Super. Pisa (to appear), arXiv:1707.09596.

\bibitem{Hein} \textsc{J. Heinonen,} \emph{Lectures on analysis on metric
spaces}, Springer, New York, 2001.



\bibitem{KV} \textsc{N. J. Kalton and I. E. Verbitsky,} \emph{Nonlinear
equations and weighted norm inequalities,} Trans. Amer. Math. Soc. \textbf{%
351} (1999) 3441--3497.


\bibitem{LT} \textsc{P. Li and L.F. Tam,} \emph{Green's functions, harmonic
functions, and volume comparison}, J. Differ. Geom. \textbf{41} (1995)
277--318.

\bibitem{LY} \textsc{P. Li and S.T. Yau,} \emph{On the parabolic kernel of
the Schr\"{o}dinger operator}, Acta Math. \textbf{156} (1986) 153--201.

\bibitem{M} \textsc{V. Maz'ya}, \emph{Sobolev Spaces, with Applications to
Elliptic Partial Differential Equations}, 2nd revised and augmented ed.,
Grundlehren der math. Wissenschaften \textbf{342}, Springer, Berlin, 2011.

\bibitem{MP} \textsc{E. Mitidieri and S. I. Pohozaev}, \emph{Nonexistence of
positive solutions for quasilinear elliptic problems on $\mathbb{R}^{N}$},
Proc. Steklov Inst. Math. \textbf{227} (1999) 186-216.

\bibitem{SC} \textsc{L. Saloff-Coste}, \emph{Aspects of Sobolev-Type
Inequalities}, London Math. Soc., Lecture Notes Series \textbf{289},
Cambridge University Press, 2002.

\bibitem{SZ} \textsc{J. Serrin and H. Zou}, \emph{Cauchy-Liouville and
universal boundedness theorems for quasilinear elliptic equations and
inequalities}, Acta. Math., \textbf{189}(2002), 79-142.

\bibitem{S} \textsc{Y. Sun,} \emph{Uniqueness result on nonnegative
solutions of a large class of differential inequalities on Riemannian
manifolds}, Pacific J. Math. 280 (2016), no. 1, 241-254.


\bibitem{Var} \textsc{N. Varopoulos,} \emph{Potential theory and diffusion
on Riemannian manifolds,} in Conf. on Harmonic Analysis in Honor of A.
Zygmund, Wadsworth Math. Series, Wadsworth, Belmont, CA, pp. 821--837, 1983.

\bibitem{VW} \textsc{I. E. Verbitsky and R. L. Wheeden,} \emph{Weighted norm
inequalities for integral operators,} Trans. Amer. Math. Soc. \textbf{350}
(1998) 3371--3391.

\bibitem{X} \textsc{F. Xu,} \emph{On existence and nonexistence of nonnegative
solutions to seminlinear differential inequalities on Riemannian
manifolds}, preprint.
\end{thebibliography}
\end{document}